\def\@tocline#1#2#3#4#5#6#7{\relax
\ifnum #1>\c@tocdepth 
  \else 
    \par \addpenalty\@secpenalty\addvspace{#2}%
\begingroup \hyphenpenalty\@M
    \@ifempty{#4}{%
      \@tempdima\csname r@tocindent\number#1\endcsname\relax
 }{%
   \@tempdima#4\relax
 }%
 \parindent\z@ \leftskip#3\relax \advance\leftskip\@tempdima\relax
 \rightskip\@pnumwidth plus4em \parfillskip-\@pnumwidth
 #5\leavevmode\hskip-\@tempdima #6\nobreak\relax
 \ifnum#1<0\hfill\else\dotfill\fi\hbox to\@pnumwidth{\@tocpagenum{#7}}\par
 \nobreak
 \endgroup
  \fi}
\let\oldtocsection=\tocsection
\let\oldtocsubsection=\tocsubsection
\let\oldtocsubsubsection=\tocsubsubsection
\renewcommand{\tocsection}[2]{\hspace{0em}\oldtocsection{#1}{#2}}
\renewcommand{\tocsubsection}[2]{\hspace{1em}\oldtocsubsection{#1}{#2}}
\renewcommand{\tocsubsubsection}[2]{\hspace{2em}\oldtocsubsubsection{#1}{#2}}
\definecolor{cerulean}{rgb}{0,.48,.65} 
\definecolor{magenta}{rgb}{.5,0,.5} 
\definecolor{dred}{rgb}{.5,0,0} 
\definecolor{green}{rgb}{0,.5,0} 
\definecolor{blue}{rgb}{0,0,1} 
\definecolor{black}{rgb}{0,0,0} 
\definecolor{dgreen}{rgb}{0,.3,0} 
\definecolor{vdred}{rgb}{.3,0,0} 
\definecolor{red}{rgb}{1,0,0} 
\definecolor{salmon}{rgb}{0.98,0.50,0.45} 
\definecolor{gray}{rgb}{.5,.5,.5} 
\definecolor{seagreen}{rgb}{0.13,0.70,0.67} 
\definecolor{chartreuse}{rgb}{0.40,0.80,0.00}
\definecolor{cornflower}{rgb}{0.39,0.58,0.93} 
\definecolor{gold}{rgb}{0.80,0.68,0.00}
\theoremstyle{plain}
\newtheorem{thm}{Theorem}[section]
\newtheorem{lemma}[thm]{Lemma}
\newtheorem{cor}[thm]{Corollary}
\newtheorem{prop}[thm]{Proposition}
\newtheorem{Question}[thm]{Question}
\theoremstyle{definition}
\newtheorem{rem}[thm]{Remark}
\newtheorem{defn}[thm]{Definition}
\newtheorem{remark}[thm]{Remark}
\newtheorem{Open questions}[thm]{Open questions}
\newtheorem{Open question}[thm]{Open question}
\newtheorem{Open problems}[thm]{Open problems}
\newtheorem{Open problem}[thm]{Open problem}
\def\Bbb{\mathbb}
\def\Naturals{\Bbb{N}}
\def\ni{\noindent}
\def\CL{\hbox{\rm CL}}
\def\F+L{\hbox{$\textup{F}\!_+\textup{L}$}}
\newcommand{\BS}{\mathrm{BS}}
\def\onto{{\kern3pt\to\kern-8pt\to\kern3pt}}
\def\<{\langle}
\def\>{\rangle}
\def\|{{\ |\ }}
\newcommand{\abs}[1]{\left|#1\right|}
\renewcommand{\ni}{\noindent}
\def\*{^{\star}}
\newcommand{\NN}{\mathbb{N}}
\newcommand{\ZZ}{\mathbb{Z}}
\newcommand{\del}{\partial}
\newcommand{\rk}{\textup{rk}}
\title{Conjugator Length in the Baumslag-Gersten group}
\author{Conan Gillis}
\date{November 2024}
\begin{document}

\maketitle
\begin{abstract}
    We show that the conjugator length function of the Baumslag-Gersten group is bounded above and below by a tower of exponentials of logarithmic height  -- in particular it grows faster than any tower of exponentials of fixed height. We conjecture that no one-relator group has a larger conjugator length function than the Baumslag-Gersten group. Along the way, we also show that the conjugator length function of the $m$-th iterated Baumslag-Solitar groups is equivalent to the $m$-times iterated exponential function.
\end{abstract}
\section{Introduction}


Let $G$ be the Baumslag-Gersten one-relator group $\langle a,t\mid tat^{-1}a(tat^{-1})^{-1}=a^{2}\rangle$. The word problem of $G$ and related questions have received considerable attention in the literature. For example, the Dehn function of $G$ is equal (up to a natural equivalence $\simeq$; see Definition \ref{simeqdef}) to an exponential power-tower of height $\lfloor \log_2 n\rfloor$ \cite{platonov2004isoparametric}; in particular, the Dehn function of $G$ grows faster than any tower of exponentials of fixed height (see \cite{Gersten1992} and \cite{RileyDehn}). More recently, the word problem was shown to be solvable in time $O(n^7)$ using the technology of ``power-circuits" to efficiently represent and compute with certain large numbers \cite{MYASNIKOV2011324,miasnikov2012power}. The conjugacy problem in $G$ is also decidable, however the current best estimate of its complexity is not elementary-recursive \cite{beese2012konjugationsproblem}. Restricting to the special case of $u,v\in X=G\smallsetminus\langle a,tat^{-1}\rangle_G$, the authors of \cite{DMW} give an $O(n^4)$-time algorithm deciding whether $u$ and $v$ are conjugate, and that this set $X$ is ``generic" in a natural way, so the conjugacy problem for $G$ is generically decidable in polynomial time. They further conjecture that there is no algorithm whose worst-case or average running time is polynomial. Most recently, Mattes and Wei{\ss} showed  that, for fixed $g\in G$, deciding whether a given element is conjugate to $G$ is in $\mathsf{uTC}^1$ \cite{MattesWeiss}.

 Now, fix a finite presentation $\langle S\mid R\rangle$ of a finitely presented group $H$, and let  $\abs{\cdot}_H$ be the resulting word metric on $H$. Given two words $u$ and $v$ on $S$ representing conjugate elements of $H$, we define $c(u,v)$ to be the smallest value of $\abs{\gamma}_H$ over all $\gamma\in H$ conjugating $u$ to $v$. The \textit{conjugator length function} $\CL_H(n)$ of $H$ is the maximum value of $c(u,v)$ over all conjugate pairs $(u,v)$ such that $\abs{u}_H+\abs{v}_H\leq n$.  Although $\CL_H(n)$ depends on the presentation chosen for $H$, it can be shown that its $\simeq$-equivalence-class does not.

Conjugator length functions have lately gained attention as a way of quantifying the difficulty of the conjugacy problem in finitely presented groups. This is in close analogy with the relationship between the word problem and the Dehn function. For example, modulo the complexity of the word problem, the conjugator length function can be used to find an upper bound for the computational complexity of the conjugacy problem in a group. We refer the reader to \cite{BRS} for an up-to-date survey on this and other applications.

 We continue the study of the conjugacy problem of $G$ by computing close upper and lower bounds for $\CL_G(n)$. As preparation, we also compute exactly the conjugator length function of the \textit{$m$-th iterated Baumslag-Solitar group} $G_m=\langle s_0,s_1,\ldots, s_m\mid s_is_{i-1}s_i^{-1}=s_{i-1}^2\ \forall i=1,\ldots, m\rangle$ for all $m$. To state our results fully, we require the following definitions:
 
 \begin{defn}\label{simeqdef}Let $f,g:\Naturals\to\Naturals$. We say $f(n)\preceq g(n)$ if there exists some $C>0$ such that $f(n)\leq Cg(Cn+C)+Cn+C$ for all $n\in \Naturals$, and $f(n)\simeq g(n)$ if $f(n)\preceq g(n)$ and $g(n)\preceq f(n)$.
 \end{defn}
 \begin{defn}
     For $n\in \Naturals$, define the iterated exponential function $E(m,n)$ inductively as follows: $E(0,n)=n$, $E(m,n)=2^{E(m-1,n)}$.
 \end{defn}

 Our first result is the following:
\begin{thm}\label{FirstTheorem}
    We have $\CL_{G_m}(n)\simeq E(m-1,n)$.
\end{thm}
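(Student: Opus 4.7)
My plan is to prove $\CL_{G_m}(n) \simeq E(m-1,n)$ by induction on $m$, exploiting the iterated HNN decomposition
\[
G_m \cong G_{m-1} *_{\langle s_{m-1}\rangle \,\cong\, \langle s_{m-1}^2\rangle}
\]
with stable letter $s_m$. The base case $m=1$ reduces to showing $\CL_{\BS(1,2)}(n) \simeq n = E(0,n)$, which can be read off from the standard HNN normal form of $\BS(1,2)$ combined with a description of the centralizers of cyclic subgroups.

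For the upper bound $\CL_{G_m}(n) \preceq E(m-1,n)$, given a conjugate pair $u,v \in G_m$ with $|u|+|v|\le n$, the first step is to cyclically reduce both elements relative to the $s_m$-splitting, using a conjugator of linear length. If the cyclically reduced forms both lie in $G_{m-1}$, Collins' lemma decomposes any $G_m$-conjugator into a sequence of $s_m^{\pm 1}$-conjugations through the edge subgroup $\langle s_{m-1}\rangle$ (each of which can only double a power of $s_{m-1}$) followed by a $G_{m-1}$-conjugator. Combined with the inductive bound $\CL_{G_{m-1}}(n') \preceq E(m-2,n')$ applied to the intermediate element of length at most $2^{O(n)}$, this produces a $G_m$-conjugator of length $\preceq E(m-1,n)$. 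If the cyclically reduced forms contain $s_m$-syllables, conjugacy is detected by cyclic permutation of Britton normal forms together with conjugation by edge-subgroup elements; the conjugator decomposes into a linear-length cyclic shift plus a $G_{m-1}$-conjugator between matched $G_{m-1}$-blocks, again handled by induction.

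For the lower bound $\CL_{G_m}(n) \succeq E(m-1,n)$, I plan to exhibit explicit conjugate pairs $(u_n,v_n)$ of total length $O(n)$ whose shortest conjugator has length $\Omega(E(m-1,n))$. The subtle point is that obvious candidates such as $(s_{m-1}, s_{m-1}^{2^n})$ only yield a linear-length conjugator, because the natural witness $s_m^n$ itself has length $n$ -- the tower structure works against naive examples. Instead, one should adapt ideas from the Platonov--Gersten tower-of-exponentials construction for the Dehn function, choosing $u_n$ and $v_n$ from deeply nested iterated relators in such a way that any conjugator is forced to encode the full iterated-exponential tower. Crucially, one must verify that no element of the centralizer coset $\gamma_0 \cdot C_{G_m}(u_n)$ admits a short representative, which requires a careful computation of the centralizer at each HNN layer.

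The main obstacle is the lower bound. Constructing conjugate pairs whose minimum conjugator genuinely attains length $E(m-1,n)$ -- and ruling out any shortening of the conjugator via power-circuit compression across the HNN layers -- is the combinatorially delicate part. The upper bound, by contrast, is an intricate but conceptually standard HNN-conjugacy analysis with careful length accounting via Collins' lemma.
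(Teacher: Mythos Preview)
Your upper-bound sketch via Collins' lemma and induction on $m$ is a plausible alternative to the paper's method, which does not induct on $m$ but instead first conjugates $u,v$ down to words on $\{s_0,\dots,s_r\}$ (where $r=\rk(u)=\rk(v)$) at cost $\preceq E(m-1,n)$ and then analyzes the $s_r$- and $s_{r+1}$-corridor structure of a reduced annular diagram directly. Your scheme could probably be made rigorous, but you have not actually bounded the edge-group element in the $s_m$-syllable case: Collins' lemma hands you $c\in\langle s_{m-1}\rangle$ with $cu'c^{-1}=v$, and ``a $G_{m-1}$-conjugator between matched $G_{m-1}$-blocks, handled by induction'' glosses over how the matched blocks determine $c$ and why $|c|_{G_m}$ is controlled. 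This is precisely where the paper's diagram argument (its Case~1) does real work.

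The genuine gap is the lower bound, where you have no construction at all. The paper's pair is $u_n=s_1^2$ and $v_n=s_0^{E(m,n)-1}s_1^2$; one checks $|v_n|_{G_m}=O(n)$ and that every conjugator equals $s_1^\alpha s_0^\beta$ in $G_m$ with $-3\beta\cdot 2^\alpha=E(m,n)-1$. Since $E(m,n)-1$ is odd for $m\ge 2$, one has $\alpha\le 0$ and $\beta=-2^{-\alpha}(E(m,n)-1)/3$. The missing idea is this: $(E(m,n)-1)/3=\sum_{j=0}^{E(m-1,n)/2}2^{2j}$ has roughly $E(m-1,n)/2$ nonzero bits, and the paper proves (via a power-circuit style argument) that $|s_0^k|_{G_m}\le N$ forces $k=\sum n_j 2^{m_j}$ with $\sum|n_j|\le N$, while integers of the form $\sum_{j=0}^p 2^{2j+\alpha}$ provably require at least $p$ terms in any such signed-binary representation. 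This forces $|s_0^\beta|_{G_m}\succeq E(m-1,n)$. The deliberate choice of exponent $E(m,n)-1$ rather than $E(m,n)$, and the resulting incompressibility of $\beta$, is exactly the insight you lack; a gesture toward the Platonov--Gersten Dehn-function construction does not supply it, and the centralizer computation you flag as ``crucial'' is in fact the routine part once the right pair is in hand.
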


 The upper bound of this statement is shown in Proposition \ref{GmUpperBound}, which is proved by first conjugating $u$ and $v$ into the subgroup $\langle s_0,\ldots, s_r\rangle_{G_m}$, $r$ being the smallest number such that $u$ and $v$ are conjugate to an element of $\langle s_0,\ldots, s_r\rangle_{G_m}$ (Lemma \ref{conjugatingToRank}). The proof of Proposition \ref{GmUpperBound} then proceeds by analysis of annular diagrams over $G_m$ (defined in Section \ref{AnnDiag}) using facts about the distortion of the subgroups $\langle s_i\rangle_{G_m}$ and conjugator length in the subgroups $\langle s_i,s_{i-1}\rangle_{G_m}$ (Lemmas \ref{powerOfGeneratorLength} and \ref{conjugateEntirelyInTwoConsecutiveGenerators} respectively).

 The lower bound is shown in in Proposition \ref{IteratedLowerBound}, where we follow \cite{MYASNIKOV2011324, miasnikov2012power} in representing each $k\in \ZZ$ as sums of the form $\sum_j n_j2^{m_j}$. However, while these papers use representations of this sort to efficiently perform arithmetic operations on large numbers, we use them to find lower bounds for $\abs{s_0^k}_{G_m}$ for specific values of $k$ (Corollary \ref{lengthOfCompactPower}). We then  show $|s_1^2|_{G_m}+|s_0^{E(m,n)-1}s_1^2|_{G_m}\leq Cn$ for some constant $C>0$ (Lemma \ref{lengthOfPowersOf2OfGenerators}). The proof proper of Proposition \ref{IteratedLowerBound} begins by showing that any minimal length conjugator taking $s_1^2$ to $s_0^{E(m,n)-1}s_1^2$ must be equal (in $G_m$) to $s_1^\alpha s_0^\beta$, where $\alpha\leq 0$ and $\beta=-2^{-\alpha}\cdot (E(m,n)-1)/3$ (the factor of $-\frac{1}{3}$ arises from the fact that $s_0^{\beta}s_1^2s_0^{-\beta}=s_0^{-3\beta}s_1^2$). Using the triangle inequality and a basic fact about the structure of $G_m$ (Corollary \ref{retractShrinksWords}), we obtain $|s_1^\alpha s_0^\beta|_{G_m}\geq  |s_0^\beta|_{G_m}/2$, and complete the proof of the lower bound by applying Corollary \ref{lengthOfCompactPower} in the case where $k=\beta$. The length of the binary representation of $(E(m,n)-1)/3$ is the key element in applying this Corollary.

The second result of our paper is:
\begin{thm}\label{SecondTheorem}
    We have $E\left(\left\lfloor \frac{1}{3}\log_2 n\right\rfloor -1,1\right)\preceq \CL_G(n)\preceq E(\lfloor \log_2 n\rfloor,1)$.
\end{thm}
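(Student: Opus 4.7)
Both directions pivot on the embedding $\iota:G_m\hookrightarrow G$ sending $s_i\mapsto t^iat^{-i}$: the relations $s_is_{i-1}s_i^{-1}=s_{i-1}^2$ are the $t^{i-1}$-conjugates of the Baumslag-Gersten relator, and injectivity follows by a Britton-style argument using the HNN decomposition $G=\mathrm{HNN}(\BS(1,2),\langle a\rangle\to\langle b\rangle,t)$ with $b=tat^{-1}$. This embedding lets us transport lower-bound witnesses from Theorem~\ref{FirstTheorem} into $G$ and reduce conjugacy of short elements of $G$ to conjugacy inside some $G_m$.

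For the upper bound, I would argue that any cyclically reduced conjugate pair $(u,v)$ with $|u|_G+|v|_G\leq n$ is conjugate, via a short conjugator, to a pair lying in $\iota(G_m)$ for some $m=O(\log_2 n)$. The tools are the HNN structure above and an annular-diagram analysis parallel to Proposition~\ref{GmUpperBound}: because $\langle a\rangle$ has tower-of-exponentials distortion in $G$, the maximum $t$-nesting level appearing on the inner boundary of a minimal annular diagram of perimeter $\leq n$ is $O(\log_2 n)$. Once inside $G_m$, Theorem~\ref{FirstTheorem} gives a conjugator of length $E(m-1,Cn)\preceq E(\lfloor\log_2 n\rfloor,1)$, using $E(k-1,n)\preceq E(k,1)$ for $k\gtrsim\log^* n$.

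For the lower bound, set $m=\lfloor\tfrac{1}{3}\log_2 n\rfloor$ and consider $u=s_1^2$, $v=s_0^{E(m,1)-1}s_1^2$ inside $\iota(G_m)$. Iterating $s_i^{E(k,1)}=s_{i+1}^{E(k-1,1)}s_is_{i+1}^{-E(k-1,1)}$ and using $|s_i|_G\leq 2i+1$ yields $|s_0^{E(m,1)}|_G=O(m\cdot 2^m)$, so the $\tfrac{1}{3}$ factor guarantees $|u|_G+|v|_G\leq n$ with slack. By Proposition~\ref{IteratedLowerBound}, any $G_m$-conjugator has the form $s_1^\alpha s_0^\beta$ with $\alpha\leq 0$ and $\beta=-2^{-\alpha}(E(m,1)-1)/3$, so a general $G$-conjugator lies in the coset $s_1^\alpha s_0^\beta\cdot Z_G(s_1^2)$ for one such pair.

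The main obstacle is to show every element of every such coset has $G$-length $\geq E(m-1,1)$. This is delicate because $\langle a\rangle$ is heavily distorted in $G$ and because $Z_G(s_1^2)$ properly contains $\langle s_1\rangle$ -- for instance $s_2^{-1}s_1s_2$ also centralizes $s_1^2$, since $s_2s_1^2s_2^{-1}=s_1^4$. The plan is to upgrade Corollary~\ref{lengthOfCompactPower} from the $G_m$-metric to the $G$-metric via a power-circuit analysis in the spirit of \cite{MYASNIKOV2011324,miasnikov2012power}: the binary expansions of the relevant $\beta$'s are ``dense'' alternating patterns of length $\geq E(m-1,1)$, and one needs to show no word over $\{a,t\}$ representing an element of $s_1^\alpha s_0^\beta\cdot Z_G(s_1^2)$ can compress such a pattern below $\Theta(E(m-1,1))$ symbols.
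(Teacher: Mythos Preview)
Your upper-bound sketch has a genuine gap. The claim that every conjugate pair $(u,v)$ can be conjugated by a short element into $\iota(G_m)$ for some $m=O(\log_2 n)$ is false: any element with nonzero $t$-exponent sum (for instance $u=v=t$) lies outside the normal closure of $a$ and hence is not conjugate into any $\iota(G_m)$. Even restricting to the normal closure, there is a second problem: the embedding $\iota$ is heavily distorted, so after conjugating into $G_m$ the resulting words can have $G_m$-length far exceeding $Cn$, and feeding that into Theorem~\ref{FirstTheorem} yields $E(m-1,\text{huge})$, not $E(m-1,Cn)$. The paper avoids both issues by never reducing to $G_m$. It first conjugates $u,v$ to cyclically Britton-reduced words of length $\preceq nE(\lfloor\log_2 n\rfloor,1)$ (Lemma~\ref{cycConjGersten}), and then splits: if the words still contain $t$-letters, they lie in $G\smallsetminus\langle s_0,s_1\rangle_G$ and the conjugator produced by the algorithm of \cite{DMW} already has length linear in the Britton-reduced lengths (Lemma~\ref{DMWlength}); if not, an annular-diagram argument shows there are at most two non-contractible $t$-rings and the problem collapses to conjugacy in $\BS(1,2)$, where $\CL$ is linear.

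For the lower bound your witnesses and the constraint $\beta=-2^{-\alpha}(E(m,1)-1)/3$ match the paper, and your instinct that the key step is a $G$-metric version of Corollary~\ref{lengthOfCompactPower} is exactly right. But you are making the obstacle harder than it is. The paper does not analyze the full coset $s_1^\alpha s_0^\beta\cdot Z_G(s_1^2)$. Instead, the annular-diagram reduction (looking at the non-contractible $t$-ring closest to $v_n$ and observing its boundary word must equal $s_1^2$) shows that a minimal conjugator can be taken to lie in $\langle s_0,s_1\rangle$, hence literally equals some $s_1^\alpha s_0^\beta$ satisfying the constraint. The remaining task is then only to lower-bound $|s_1^\alpha s_0^\beta|_G$, and this is done by a single clean trick (Lemma~\ref{lowerWordBoundGersten}): if $M=|g|_G$ for $g$ in the normal closure of $a$, then $g\in H_{M+1}\cong G_{2M+2}$ and $|g|_{G_{2M+2}}\leq M$, so the $G$-length is bounded below by the $G_{2M+2}$-length. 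This immediately upgrades Lemma~\ref{numberOfTerms} and Corollary~\ref{lengthOfCompactPower} to the $G$-metric (Lemmas~\ref{numberOfTermsGersten} and~\ref{lengthOfCompactPowerGersen}), and the bound on $|s_1^\alpha s_0^\beta|_G$ follows as in Lemma~\ref{s1s0bound}. No new power-circuit analysis is required.
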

For an intermediate upper bound of $nE(\lfloor \log_2 n\rfloor,1)$, obtained in Proposition \ref{upperBoundGersten}, we adopt a new presentation (Lemma \ref{GerstenSubgroupIterated} and Remark \ref{newGerstenPres}) for $G$: $$G=\langle s_0,s_1,t\mid s_1s_0s_1^{-1}=s_0^2,\  ts_0t^{-1}=s_1\rangle.$$ Then, we recall Platonov's computation of the distortion functions of $\langle s_0\rangle_{G}$,$\langle s_1\rangle_{G}$ in $G$ \cite{platonov2004isoparametric} (Lemma \ref{GerstenDistortionLemma} and Corollary \ref{GerstenDistortionCor}), and conjugate $u$ and $v$ to specific cyclically Britton-reduced conjugacy-class representatives (Lemma \ref{cycConjGersten}). If $u,v\in X$, the desired upper bound follows by the length of the conjugator returned by the algorithm in \cite[Theorem 3]{DMW}. For all other elements of $G$, our upper bound is established via a diagrammatic argument similar in spirit to that for Proposition~\ref{GmUpperBound}, except that more care is needed to account for the generator $t$. Corollary \ref{finalGerstenUpper} then obtains the desired final upper bound, showing $nE(\lfloor \log_2 n\rfloor,1)\preceq E(\lfloor \log_2 n\rfloor,1)$.

For the lower bound, shown in Proposition \ref{lowerBoundGersten}, we reduce to consideration of conjugacy in the subgroup $\langle t^{-m}s_0t^m,\ldots,t^{-1}s_0t,s_0,ts_0t^{-1},\ldots, t^ms_0t^{-m}\rangle_{G}$ for some $m$. We show (Lemma \ref{GerstenSubgroupIterated}) that this subgroup is isomorphic to $G_{2m}$, which lets us appeal to the lower bound of Theorem \ref{FirstTheorem}, and in particular Corollary \ref{lengthOfCompactPower}, to complete the proof. 

\begin{remark}
    Neither of the bounds in Theorem \ref{SecondTheorem} are (to our current knowledge) strict. Indeed, the upper and bounds themselves are very likely not equivalent, since removing the $\frac{1}{3}$ factor from the lower bound seems to require increasing the input to size $n^3$, not $Cn+C$.  Additionally, $\CL_G(n)$ could be $\simeq$-equivalent to neither the upper or the lower bound. 
\end{remark}

In contrast to the word problem for one-relator groups \cite{Magnus_Karrass_Solitar}, the decidability of their conjugacy problem is still an open question. This paper quantifies the conjugacy problem in one well-known case and shows that, in a sense (see \cite{BRS}), it is of similar difficulty to the word problem of that case. Inspired by Gersten's conjecture in \cite{Gersten1992} that the Dehn function of $G$ is the largest among all one-relator groups, we pose the following question:
\begin{Question}
For all one-relator groups $H$, is $\CL_H(n)\preceq \CL_G(n)$?
\end{Question}

A positive answer to this question would, by standard techniques, give a solution to the conjugacy problem for one-relator groups, while a counterexample may better illuminate some of the intrinsic difficulties to be overcome in finding such a solution.

\subsection{Acknowledgments} The author thanks Tim Riley for his very helpful advice and comments. This work was generously supported by NSF Grant DGE–2139899.

\section{Preliminaries} 
\subsection{Conventions and Notation}
\begin{defn}
    A \textit{word} $w$ on a set $X$ is a string of elements of $X$ and their formal inverses, denoted $x^{-1}$ for each $x\in X$. We say $w$ is \textit{reduced} if it contains no subwords of the form $xx^{-1}$ or $x^{-1}x$. A \textit{cyclic conjugate} of $w$ is a word $w'$ which is a cyclic permutation of the letters of $w$, and $w$ is \textit{cyclically reduced} if every cyclic conjugate is reduced, or equivalently, if $w^2$ is reduced. If $X$ is the generating set of some group, we will abuse notation and simply write $w$ for the element represented by $w$.
\end{defn}

     \begin{defn}
         Let $H$ be a group with a fixed finite presentation $\langle S\mid R\rangle$. For all $h\in H$, $|h|_H$ is the minimal length of word on $S$ representing $h$ in $H$. We call $|\cdot|_H$ the \textit{word-metric} on $H$. All of the groups in this paper will be finitely presented, and the presentation being used will be clearly specified. We will refer to the generating sets of the given presentations as \textit{chosen generators}. Finally, we state at the outset that no two defining relations of any presentation will be cyclic permutations of each other.
     \end{defn}

\subsection{Conjugator Length and Annular Diagrams}\label{AnnDiag}

We assume the reader is familiar with the theory of van Kampen diagrams over finitely presented groups as presented in \cite{lyndon2001combinatorial}, and focus solely on annular diagrams. For this subsection, let $H=\langle S\mid R\rangle$ be a finitely presented group, and suppose without loss of generality that $R$ contains no two elements which are cyclic permutations of each other.

\begin{defn}\label{AnnularDiagram}Let $u$ and $v$ be words on $S$ representing conjugate elements of $H$. An \textit{annular diagram} $\Delta$ for $u$ and $v$ is a planar, oriented $2$-cell complex with edges labeled by $S$, along with points $P_1,P_2$ on the outer and inner boundary components respectively, such that
\begin{enumerate}
    \item \label{AD1} $\Delta$ is an annulus,
    \item \label{AD2} starting from $P_1$ and $P_2$, the words along the outer and inner boundary components (oriented clockwise) are equal to $u$ and $v$ respectively, and
    \item \label{AD3} starting from some point along the boundary $\del C$ of a 2-cell $C$ in $\Delta$, the word along $\del C$ is an element of $R^{\pm1}$.
\end{enumerate}
In such an annular diagram, the length $D(\Delta)$ of the shortest path between $P_1$ and $P_2$ is the \textit{diameter}. Also, a 2-cell $C$ in $\Delta$ is said to \textit{correspond} to any $r\in R$ such that $r^{\pm1}$ is (a cyclic conjugate of) the word along $\del C$. By our assumption that $R$ contains no elements which are cyclic permutations of each other, each 2-cell corresponds to a unique $r\in R$.
\end{defn} 

For the following fundamental result, see Lemmas V.5.1 and V.5.2 of \cite{lyndon2001combinatorial} and their proofs:
\begin{lemma}
     Two words $u$ and $v$ on $S$ represent conjugate elements of $H$ if and only if there exists an annular diagram for $u$ and $v$. Moreover, $c(u,v)=\min\{D(\Delta)\}$, where this minimum is taken over all annular diagrams $\Delta$ for $u$ and $v$.
\end{lemma}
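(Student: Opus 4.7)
The plan is to prove both implications and the diameter equality by passing between annular diagrams and disk diagrams in the standard way. Throughout, I work in the free group $F(S)$ and use that a word $w$ on $S$ represents the identity in $H$ if and only if it bounds a van Kampen disk diagram over $\langle S\mid R\rangle$.

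For the forward implication (diagram $\Rightarrow$ conjugate), suppose an annular diagram $\Delta$ for $u$ and $v$ exists, and let $p$ be any edge path in the $1$-skeleton of $\Delta$ from $P_1$ to $P_2$; let $\gamma$ be the word on $S$ read along $p$. Cutting $\Delta$ open along $p$ turns the annulus into a topological disk whose boundary word, read appropriately from a basepoint, is (a cyclic conjugate of) $u\,\gamma\,v^{-1}\gamma^{-1}$. The $2$-cells of $\Delta$ become the $2$-cells of this disk, still labeled by elements of $R^{\pm 1}$ (by condition \eqref{AD3}), so we have exhibited a van Kampen disk diagram for $u\gamma v^{-1}\gamma^{-1}$. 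Hence $u\gamma v^{-1}\gamma^{-1}=_H 1$, i.e.\ $\gamma$ conjugates $v$ to $u$ (and $\gamma^{-1}$ conjugates $u$ to $v$). In particular, $c(u,v)\le |\gamma|_H\le \Length(p)$, and taking $p$ to be a shortest path gives $c(u,v)\le D(\Delta)$.

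For the reverse implication (conjugate $\Rightarrow$ diagram), suppose $\gamma\in H$ satisfies $\gamma u \gamma^{-1}=v$ in $H$, and choose a word $w$ on $S$ representing $\gamma$ with $|w|=|\gamma|_H$. Then $w\,u\,w^{-1}v^{-1}$ represents the identity in $H$, so there is a van Kampen disk diagram $\Delta_0$ with this boundary word, read from some basepoint $Q$. The boundary of $\Delta_0$ decomposes as four consecutive arcs labeled $w$, $u$, $w^{-1}$, $v^{-1}$. Identify the two $w$-arcs by the label-preserving homeomorphism that matches up corresponding letters; this produces an oriented $2$-complex $\Delta$ homeomorphic to an annulus, whose outer and inner boundary components read $u$ and $v$ respectively starting from the images $P_1,P_2$ of the endpoints of the identified $w$-arcs. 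The $2$-cells and their labels descend from $\Delta_0$, so conditions \eqref{AD1}–\eqref{AD3} of Definition~\ref{AnnularDiagram} hold. The image of the identified $w$-arc is a path from $P_1$ to $P_2$ of length $|w|=|\gamma|_H$, so $D(\Delta)\le |\gamma|_H$. Applying this to a shortest conjugator gives $\min D(\Delta)\le c(u,v)$, which combined with the previous paragraph yields $c(u,v)=\min D(\Delta)$.

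The main obstacle is the identification step that turns $\Delta_0$ into an annulus: one must verify that after gluing the two $w$-arcs, the resulting space is still a planar $2$-complex homeomorphic to an annulus with the correct boundary labels, and that no boundary letter of a $2$-cell is ``lost'' in the process. This is a purely topological check, handled exactly as in the disk case of Lyndon–Schupp. A minor additional subtlety is that a shortest path in the $1$-skeleton of $\Delta$ is automatically a reduced edge path, so the word read along it has length equal to its combinatorial length, matching $D(\Delta)$; this guarantees that both inequalities between $c(u,v)$ and $\min D(\Delta)$ are tight.
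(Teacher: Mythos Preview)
Your proposal is correct and is precisely the standard argument the paper defers to: the paper does not give its own proof but simply cites Lemmas~V.5.1 and~V.5.2 of Lyndon--Schupp, whose content is exactly the cut-open/glue-up correspondence between annular diagrams and disk diagrams for $w u w^{-1} v^{-1}$ that you have written out. Your acknowledgment that the gluing step requires the usual care to ensure planarity and annular topology is apt, and that is indeed where Lyndon--Schupp do the work.
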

Now, we sketch a proof of a technical lemma we will use below.
\begin{lemma}\label{exciseTwoCorridors}
Let $\gamma_1$ and $\gamma_2$ be two non-crossing simple loops in an annular diagram $\Delta$ such that$\gamma_1,\gamma_2$ have the same non-zero winding number around the inner boundary component of $\Delta$, and let $\Delta_i$ be the annular subdiagram bounded by $\gamma_i$ and the inner boundary. Both $\Delta_1$ and $\Delta_2$ are homeomorphic to annuli, with either $\Delta_1\subseteq \Delta_2$ or $\Delta_2\subseteq \Delta_1$. Moreover, if  the words along $\gamma_1$ and $\gamma_2$ are the same, then the subdiagram $\Delta_2\smallsetminus \Delta_1$ can be excised to obtain a diagram with smaller or equal diameter.
    
\end{lemma}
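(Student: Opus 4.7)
The plan proceeds in three stages: the topological claims (that each $\Delta_i$ is an annulus and that they nest), the combinatorial construction of the excised diagram $\Delta'$, and the diameter bound $D(\Delta')\leq D(\Delta)$.

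For the topology, I would invoke the standard fact that a simple closed curve in an annulus with non-zero winding number about one boundary component is isotopic to that component, and thus cobounds an annular region with it; this region is precisely $\Delta_i$. Since $\gamma_1,\gamma_2$ do not cross, $\gamma_2$ sits entirely in one of the two annular components of $\Delta\ssm\gamma_1$: in the component adjacent to the inner boundary we obtain $\Delta_2\subseteq\Delta_1$, and otherwise $\Delta_1\subseteq\Delta_2$.

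Assume WLOG $\Delta_1\subseteq\Delta_2$, and let $A=\Delta_2\ssm\mathrm{int}(\Delta_1)$ be the closed annular subdiagram with $\del A=\gamma_1\sqcup \gamma_2$. The hypothesis that the words along $\gamma_1$ and $\gamma_2$ agree provides a label-preserving edge identification $\phi:\gamma_2\to\gamma_1$ (determined by a choice of aligned basepoints), and we set $\Delta'=(\Delta\ssm\mathrm{int}(A))/\phi$. Topologically $\Delta'$ is the gluing of two annuli along a common boundary circle and is therefore an annulus; its outer and inner boundary components (with basepoints $P_1,P_2$) and their boundary words $u,v$ are inherited unchanged from $\Delta$; and every 2-cell of $\Delta'$ carries a valid relator label inherited from $\Delta$. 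Hence $\Delta'$ is a legitimate annular diagram for $u$ and $v$.

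For the diameter bound, I would take a geodesic $\pi$ in $\Delta$ from $P_1$ to $P_2$ and isolate a sub-path $\pi_A$ of $\pi$ lying in $A$ with endpoints $X\in\gamma_2$ and $Y\in\gamma_1$ (such a sub-path must exist because $P_1\notin\Delta_2$ while $P_2\in\Delta_1$). Choosing the basepoints defining $\phi$ so that $\phi(X)=Y$, the image of $\pi\ssm\pi_A$ in $\Delta'$ is a path from $P_1$ to $P_2$ of length $|\pi|-|\pi_A|\leq D(\Delta)$, establishing $D(\Delta')\leq D(\Delta)$. \emph{The main obstacle} is justifying the basepoint alignment $\phi(X)=Y$: a priori the hypothesis provides $\phi$ only up to cyclic shift of the common boundary word, and one must argue that the sub-path $\pi_A$ inside $A$ itself pins down a label-compatible matching between the vertices of $\gamma_1$ and $\gamma_2$ that sends $X$ to $Y$. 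I would establish this via standard corridor-tracing in the annular subdiagram $A$, using that its two boundary circles carry the same word to propagate the label-correspondence along $\pi_A$.
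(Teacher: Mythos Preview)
Your topological opening is fine and matches the paper. The gap is in the second stage. Asserting that $(\Delta\ssm\mathrm{int}(A))/\phi$ is ``the gluing of two annuli along a common boundary circle and therefore an annulus'' skips exactly the point the paper's sketch singles out: ``non-crossing'' does not force $\gamma_1$ and $\gamma_2$ to be disjoint, so the two pieces $O=\overline{\Delta\ssm\Delta_2}$ and $I=\Delta_1$ may already share vertices or edges in $\Delta$, and the further identification by $\phi$ can then pinch distinct vertices together so that the quotient is no longer a planar $2$-complex (hence not an annular diagram in the sense of Definition~\ref{AnnularDiagram}). The paper's proof is precisely your naive delete-and-glue step together with an explicit acknowledgment that planarity may fail, repaired by invoking the ``sewing and detachment'' procedure from the proof of van~Kampen's theorem in \cite{lyndon2001combinatorial}: identify the edges of $\gamma_1$ and $\gamma_2$ one pair at a time, and detach any $2$-cell that would violate planarity. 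You have omitted this repair.

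Your diameter argument also has a real gap. The only freedom in $\phi$ is a cyclic shift of the common boundary word $w$, so you can arrange $\phi(X)=Y$ only when $w$ admits a cyclic symmetry carrying the position of $X$ to that of $Y$; nothing in the hypotheses supplies this. The proposed ``corridor-tracing in $A$'' cannot manufacture it: the subdiagram $A$ merely witnesses that $w$ is conjugate to itself by the element read along $\pi_A$, and that element need not correspond to any cyclic permutation of the letters of $w$ --- indeed for a general presentation there are no corridors to trace at all. It happens that in every application of this lemma later in the paper $w$ is a power of a single generator, so every alignment is label-preserving and your device would go through there; but that is a feature of the applications, not of the lemma as stated. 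The paper's sketch does not spell out the diameter inequality separately, leaving it implicit in the sewing process.
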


\begin{proof}[Proof Sketch:] The first conclusion follows immediately from the assumptions on $\gamma_1,\gamma_2$. For the second, proceeding naively, one can simply delete all of $\Delta_2\smallsetminus \Delta_1$ and identify the paths $\gamma_1$ and $\gamma_2$ together. There exist cases, however, where this operation may violate planarity. As in the proof van Kampen's Theorem, this can be handled by the operations of ``sewing" and ``detatchment": identifying each pair of edges of the paths sequentially, and removing any 2-cells which violate planarity (see pg. 150-151 of \cite{lyndon2001combinatorial} for details). 
    
\end{proof}

\subsection{Corridors}
\begin{defn}\label{defCorrs}
Except for the single defining relation in the presentation $G=\langle a,t\mid (tat)a(tat)^{-1}=a^2\rangle$, all of the defining relations considered in this paper will be written in the form $xwx^{-1}\tilde{w}^{-1}$ for some $x$ not appearing in $w$ or $\tilde{w}$. In a (van Kampen or annular) diagram $\Delta$, a maximal sequence $C_1,C_2,\ldots, C_k$ of 2-cells such that $C_i$ and $C_{i+1}$ share an edge labeled by $x$ is called an $x$\textit{-corridor}. A single $x$-edge on the boundary of a diagram is called a degenerate \textit{$x$-corridor} if it is not contained in any 2-cells.
\end{defn}

\begin{defn}\label{defReduced}
    Suppose $\Delta$ has two 2-cells $C_1$ and $C_2$ which share at least one edge with an endpoint $P$, such that the word along $\del C_1$ read clockwise from $P$ is the inverse of the word along $\del C_2$ read clockwise from $P$. Then $C_1$ and $C_2$ are called \textit{canceling 2-cells}. If we excise $C_1$ and $C_2$, then apply the same ``sewing" and ``detatchment" argument as Lemma \ref{exciseTwoCorridors}, we obtain a diagram $\Delta'$ with the same word(s) along the boundary, up to equality in $G$, and strictly fewer canceling pairs. A diagram is called \textit{reduced} if it contains no canceling 2-cells. 
\end{defn}

The following lemma follows immediately from Definitions \ref{defCorrs} and \ref{defReduced}.
\begin{lemma}\label{reducedAlongCorrs}
    Let $H=\langle S\mid R\rangle$ be a finitely presented group with no two elements of $R$ being cyclic conjugates of each other, and let $x\in S$. Suppose there is only one relation in $R$ of the form $xwx^{-1}\tilde{w}^{-1}$, and suppose $w,\tilde{w}$ are cyclically reduced.
    Then in a reduced annular diagram $\Delta$ over $H$, the words along the sides of any $x$-corridor in $\Delta$ are reduced.
\end{lemma}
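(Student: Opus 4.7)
The plan is to use the hypothesis that $x$ appears in only one relation, namely $r = x w x^{-1} \tilde w^{-1}$, to force every $2$-cell incident to an $x$-edge to correspond to $r^{\pm 1}$. In particular, every cell $C_i$ in an $x$-corridor has exactly two $x$-edges on its boundary, separated by an arc labeled by a copy of $w^{\pm 1}$ and an arc labeled by a copy of $\tilde w^{\pm 1}$. Picturing $C_i$ as a rectangle whose top and bottom edges are these two $x$-edges (oriented so that all $x$-edges along the corridor point the same way in the plane), the cyclic rotations of $r^{\pm 1}$ beginning with $x$ are exactly $xwx^{-1}\tilde w^{-1}$ and $xw^{-1}x^{-1}\tilde w$; call $C_i$ \emph{non-flipped} or \emph{flipped} accordingly. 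Traversing the corridor from $C_1$ to $C_k$, a non-flipped cell contributes $w$ to the ``$w$-side'' and $\tilde w$ to the ``$\tilde w$-side'', while a flipped cell contributes $w^{-1}$ and $\tilde w^{-1}$.

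The key step is to show that if two consecutive cells $C_i, C_{i+1}$ in the corridor have different types, then they form a canceling pair in the sense of Definition \ref{defReduced}, contradicting the reducedness of $\Delta$. Taking $P$ to be a common endpoint of the shared $x$-edge and computing both $\partial C_i$ and $\partial C_{i+1}$ clockwise from $P$ using the enumeration of relator rotations above, a direct check shows that the resulting words are formal inverses of each other precisely when the types differ.

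It follows that all cells in the corridor have the same type, so the $w$-side reads $w^{\pm k}$ and the $\tilde w$-side reads $\tilde w^{\pm k}$, where $k$ is the length of the corridor. Since $w$ and $\tilde w$ are cyclically reduced, $w^2$ and $\tilde w^2$ are reduced, and by induction on $k$ so are all their integer powers, completing the proof. The main obstacle is the orientation bookkeeping in the canceling-pair computation; the underlying geometric idea --- that a sign mismatch at a junction is precisely a cancellation in the diagram --- is transparent, but must be verified by a direct calculation with the specific cyclic rotations of $r^{\pm 1}$.
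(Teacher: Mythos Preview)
Your argument is correct and is exactly the detailed unpacking of what the paper leaves implicit: the paper gives no proof beyond the sentence ``The following lemma follows immediately from Definitions~\ref{defCorrs} and~\ref{defReduced}.'' Your identification of the two possible cell types and the observation that a type mismatch at a shared $x$-edge yields a canceling pair is precisely the content the paper regards as immediate.
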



\begin{defn}There are four mutually exclusive possibilities for an $x$-corridor in an annular diagram $\Delta$:

\begin{enumerate}
    \item it contains an $x$-edge on each boundary component of $\Delta$,
    \item either it contains two $x$-edges on the same boundary component of $\Delta$, or is degenerate,
    \item it forms a loop which bounds a contractible subdiagram of $\Delta$, and
    \item it forms a loop which bounds a non-contractible subdiagram of $\Delta$.
\end{enumerate}
The $x$-corridors satisfying each of these conditions are called \textit{radial $x$-corridors, $x$-arches, contractible $x$-rings,} and \textit{non-contractible $x$-rings} respectively. If $Q$ is an $x$-corridor satisfying one of the latter two cases, then $Q$ is itself an annulus; the \textit{outer word} and and \textit{inner word along $Q$} are defined accordingly. 
\end{defn}

\begin{defn}
    We say two radial $x$-corridors in $\Delta$ are \textit{consecutive} if they bound (along with the boundary components of $\Delta$) a sub-diagram containing no radial $x$-corridors; if there is only one such corridor in $\Delta$, we adopt the convention that it is consecutive to itself. 
\end{defn}

\subsection{Iterated Baumslag-Solitar Groups} 
Recall the presentation $G_m=\langle s_0,\ldots, s_m\mid s_is_{-1}s_i^{-1}=s_{i-1}^2\ \forall i=1,\ldots, m\rangle$ of the $m$-th iterated Baumslag-Solitar group.

\begin{lemma}\label{nestedBS} Let $i_m:G_m\to G_{m+1}$ be the map given by $s_i\mapsto s_i$ for $i=0,\ldots,m$. Then $i$ is injective.
\end{lemma}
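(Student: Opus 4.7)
The plan is to prove by induction on $m \geq 0$ the stronger statement that $G_{m+1}$ is the HNN extension of $G_m$ with stable letter $s_{m+1}$, associated subgroups $\langle s_m\rangle_{G_m}$ and $\langle s_m^2\rangle_{G_m}$, and amalgamating isomorphism $s_m\mapsto s_m^2$. Once this is established, injectivity of $i_m$ is immediate from the standard fact that the base of an HNN extension embeds into the extension (a consequence of Britton's Lemma; see \cite{lyndon2001combinatorial}, Chapter IV).

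For the base case $m=0$, the group $G_0=\langle s_0\rangle \isom \mathbb{Z}$ is infinite cyclic, so $\langle s_0\rangle_{G_0}$ and $\langle s_0^2\rangle_{G_0}$ are infinite cyclic and the map $s_0\mapsto s_0^2$ extends to an isomorphism between them. The presentation $G_1=\langle s_0,s_1\mid s_1s_0s_1^{-1}=s_0^2\rangle$ then exhibits $G_1$ exactly as the HNN extension of $G_0$ with stable letter $s_1$ over these associated subgroups.

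For the inductive step, assume the statement holds for $m-1$, so that $G_m$ is an HNN extension of $G_{m-1}$ with stable letter $s_m$. As the stable letter of an HNN extension, $s_m$ has infinite order in $G_m$; hence $\langle s_m\rangle_{G_m}$ and $\langle s_m^2\rangle_{G_m}$ are infinite cyclic subgroups, and $s_m\mapsto s_m^2$ extends to an isomorphism between them. A Tietze transformation rewrites the given presentation of $G_{m+1}$ as $\langle G_m,\, s_{m+1}\mid s_{m+1}s_ms_{m+1}^{-1}=s_m^2\rangle$, which exhibits $G_{m+1}$ as the HNN extension of $G_m$ with stable letter $s_{m+1}$ over the asserted associated subgroups, completing the induction.

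The only point requiring care is the inductive bookkeeping: to identify $G_{m+1}$ as an HNN extension we need $s_m$ to have infinite order in $G_m$ so that the amalgamating isomorphism is well-defined, and this is the fact that must be propagated through the induction. This propagation is automatic once $G_m$ itself is recognized as an HNN extension with $s_m$ as stable letter, so there is no real obstacle beyond setting up the induction correctly.
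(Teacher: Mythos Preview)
Your proof is correct and follows essentially the same approach as the paper: both argue by induction that $s_m$ has infinite order in $G_m$, so that $G_{m+1}$ is the HNN extension of $G_m$ with stable letter $s_{m+1}$ along $\langle s_m\rangle\to\langle s_m^2\rangle$, whence $i_m$ is injective. The only cosmetic difference is that the paper starts the induction at $m=1$ using that $\BS(1,2)$ is torsion-free, while you start at $m=0$ using $G_0\cong\ZZ$.
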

\begin{proof} Since $G_1=\BS(1,2)$ is torsion free, $s_1$ has infinite order in $G_1$. Assuming by induction that $s_m$ has infinite order in $G_m$, taking the HNN-extension with stable letter $s_{m+1}$ along the homomorphism $\langle s_{m}\rangle\to\langle s_{m}^2\rangle$ gives the group $G_{m+1}$ with the inclusion map $G_m\hookrightarrow G_{m+1}$ equal to $i_m$, and with $s_{m+1}$ having infinite order in $G_{m+1}$.
\end{proof}
\begin{lemma}\label{retractionToLowerGroups}
    The homomorphism $\pi_m:G_m\to G_{m-1}$ given by $s_0\mapsto 1$ and $s_i\mapsto s_{i-1}$ for $i\geq 1$ is a retraction.
\end{lemma}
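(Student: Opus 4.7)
The plan is to verify directly from the defining presentations that $\pi_m$ is a well-defined homomorphism, then to exhibit a section $\iota:G_{m-1}\to G_m$ with $\pi_m\circ\iota=\mathrm{id}_{G_{m-1}}$, which is exactly what it means for $\pi_m$ to be a retraction.

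First I would check that $\pi_m$ is well-defined by evaluating it on each relator of $G_m$. For $i=1$, the relator $s_1 s_0 s_1^{-1} s_0^{-2}$ maps to $s_0\cdot 1\cdot s_0^{-1}\cdot 1=1$ in $G_{m-1}$. For $2\le i\le m$, the relator $s_i s_{i-1}s_i^{-1}s_{i-1}^{-2}$ maps to $s_{i-1}s_{i-2}s_{i-1}^{-1}s_{i-2}^{-2}$, which is (the left-hand side minus the right-hand side of) a defining relation of $G_{m-1}$, since $1\le i-1\le m-1$. Hence $\pi_m$ extends to a homomorphism.

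Next, I would define $\iota:G_{m-1}\to G_m$ by $\iota(s_i)=s_{i+1}$ for $0\le i\le m-1$. To check this is well-defined, I note that each defining relator $s_is_{i-1}s_i^{-1}s_{i-1}^{-2}$ of $G_{m-1}$ (with $1\le i\le m-1$) maps under $\iota$ to $s_{i+1}s_is_{i+1}^{-1}s_i^{-2}$, which is a defining relator of $G_m$ since $2\le i+1\le m$. A direct check on generators then gives $\pi_m(\iota(s_i))=\pi_m(s_{i+1})=s_i$ for all $i=0,\ldots,m-1$, so $\pi_m\circ\iota=\mathrm{id}_{G_{m-1}}$.

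There is essentially no obstacle here beyond careful index-keeping: the relation $s_1s_0s_1^{-1}=s_0^2$ collapses to a trivial identity under $\pi_m$ (since $s_0\mapsto 1$), while every other relation is shifted one index down to an existing relation of $G_{m-1}$; correspondingly $\iota$ shifts indices up by one, landing in the subset of relations of $G_m$ that do not involve $s_0$. Thus the retraction is onto the subgroup $\langle s_1,\ldots,s_m\rangle_{G_m}$, which $\iota$ identifies with $G_{m-1}$.
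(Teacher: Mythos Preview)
Your proof is correct and follows essentially the same approach as the paper: the paper defines $\pi_m':G_{m-1}\to G_m$ by $s_i\mapsto s_{i+1}$ (your $\iota$) and notes that it is a one-sided inverse to $\pi_m$. You simply add the routine verification that both maps are well-defined on relators, which the paper leaves implicit.
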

\begin{proof}
    The homomorphism $\pi_m':G_{m-1}\to G_m$ defined by $s_i\mapsto s_{i+1}$ is a left-inverse to $
    \pi_m$.
\end{proof}
\begin{cor}\label{noLowerGeneratorsInMinimumLengthWord}
    If $g\in \langle s_i,\ldots,s_m\rangle_{G_m}$, then there exists a word $w$ on $\{s_i,\ldots,s_m\}$ of length $|g|_{G_m}$ such that $w=g$ in $G_m$.
\end{cor}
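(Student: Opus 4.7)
The plan is to construct an endomorphism $\rho_i \colon G_m \to G_m$ that fixes every $s_j$ with $j \geq i$ and kills every $s_j$ with $j < i$, and then to apply $\rho_i$ to a geodesic representative of $g$.

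To build $\rho_i$, I would define it on generators by $s_j \mapsto s_j$ for $j \geq i$ and $s_j \mapsto 1$ for $j < i$, and then check that each defining relation $s_j s_{j-1} s_j^{-1} = s_{j-1}^2$ is preserved. For $j > i$, both generators appearing are fixed, so the relation is sent to itself. For $j = i$, the generator $s_{i-1}$ is killed, so both sides collapse to $1$. For $j < i$, every generator appearing is killed, so again both sides become $1$. Hence $\rho_i$ is a well-defined homomorphism, and it is manifestly a retraction onto $\langle s_i, \ldots, s_m\rangle_{G_m}$. Equivalently, $\rho_i$ may be exhibited as an iterated composition $(\pi_m' \circ \cdots \circ \pi_{m-i+1}') \circ (\pi_{m-i+1} \circ \cdots \circ \pi_m)$ built from the retractions of Lemma~\ref{retractionToLowerGroups}, with $\pi_k'$ the left-inverse to $\pi_k$ described there.

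To finish, take any geodesic word $w$ on $\{s_0, \ldots, s_m\}$ representing $g$, and let $w'$ be the word obtained from $w$ by deleting every occurrence of $s_j^{\pm 1}$ with $j < i$. Then $w'$ is a word on $\{s_i, \ldots, s_m\}$ with $|w'| \leq |w|$, and in $G_m$ it equals $\rho_i(w) = \rho_i(g) = g$, the last equality because $g \in \langle s_i, \ldots, s_m\rangle_{G_m}$ is fixed by $\rho_i$. Minimality of $|w| = |g|_{G_m}$ then forces $|w'| = |g|_{G_m}$, producing the required word. There is no genuine obstacle beyond confirming the well-definedness of $\rho_i$, which is the short verification above.
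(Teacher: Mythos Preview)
Your proposal is correct and takes essentially the same approach as the paper: construct the retraction $\rho_i$ onto $\langle s_i,\ldots,s_m\rangle_{G_m}$ (the paper writes it purely as the composition $\pi'_m\circ\cdots\circ\pi'_{m-i+1}\circ\pi_{m-i+1}\circ\cdots\circ\pi_m$ from Lemma~\ref{retractionToLowerGroups}, which you also mention), apply it to a geodesic word $w_0$ for $g$, and observe that deleting the $s_j^{\pm1}$ with $j<i$ realizes $\rho_i$ on the word level without increasing length. Your direct relation check for the well-definedness of $\rho_i$ is a small bonus in clarity but does not change the argument.
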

\begin{proof}
    Let $w_0$ be any word of length $|g|_{G_m}$ on $\{s_0,\ldots,s_m\}$ representing $g$, and let $w$ be the word resulting from deleting $s_j^{\pm1}$ from $w_0$ for $j<i$. By Lemma \ref{retractionToLowerGroups}, $$w=(\pi'_m\circ \pi_{m-1}'\circ\cdots\circ \pi'_{m-i}\circ\pi_{m-i}\circ\pi_{m-i+1}\circ\cdots\circ \pi_m)(w_0)=g.$$ Moreover, $w$ is a word on $\{s_i,\ldots, s_m\}$ such that $|w|\leq |w_0|$.
\end{proof}
\begin{cor}\label{retractShrinksWords}
    For $\alpha,\beta\in \ZZ$, $|s_1^\alpha s_0^\beta|_{G_m}\geq |s_1^\alpha |_{G_{m}}$
\end{cor}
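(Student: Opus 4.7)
The plan is to exhibit a length-non-increasing endomorphism $\phi$ of $G_m$ that sends $s_0$ to $1$ and fixes $s_i$ for each $i \geq 1$. Once such a $\phi$ is in hand, applying it letter-by-letter to a geodesic word representing $s_1^\alpha s_0^\beta$ produces a word of no greater length representing $\phi(s_1^\alpha s_0^\beta) = s_1^\alpha$, which gives the inequality immediately.

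To construct $\phi$, I would simply take $\phi \defeq \pi'_m \circ \pi_m$, where $\pi_m \colon G_m \to G_{m-1}$ and $\pi'_m \colon G_{m-1} \to G_m$ are the homomorphisms from Lemma \ref{retractionToLowerGroups} and its proof. Tracing through the definitions, $\phi(s_0) = \pi'_m(1) = 1$, and for $i \geq 1$ we have $\phi(s_i) = \pi'_m(s_{i-1}) = s_i$, as desired. (Alternatively, one can define $\phi$ directly by these assignments: the relation $s_1 s_0 s_1^{-1} = s_0^2$ becomes the trivial identity $s_1 \cdot 1 \cdot s_1^{-1} = 1$, and the relations indexed by $i \geq 2$ involve only generators that $\phi$ fixes, so $\phi$ extends to a well-defined endomorphism of $G_m$ by von Dyck's theorem.)

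To conclude, let $w$ be a word on $\{s_0, s_1, \ldots, s_m\}$ of length $|s_1^\alpha s_0^\beta|_{G_m}$ representing $s_1^\alpha s_0^\beta$ in $G_m$. Since $\phi$ sends each chosen generator to either a chosen generator or to the identity, applying $\phi$ letter-by-letter to $w$ (deleting each occurrence of $s_0^{\pm 1}$ and leaving all other letters in place) produces a word $\phi(w)$ of length at most $|w|$ that represents $s_1^\alpha$ in $G_m$. Hence $|s_1^\alpha|_{G_m} \leq |\phi(w)| \leq |w| = |s_1^\alpha s_0^\beta|_{G_m}$.

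There is no real obstacle here; the corollary is essentially a direct consequence of the existence of the retractions in Lemma \ref{retractionToLowerGroups}, repackaged as a single endomorphism of $G_m$ that kills $s_0$ and fixes the remaining chosen generators.
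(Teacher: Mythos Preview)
Your proof is correct and is essentially the same as the paper's: both apply the endomorphism $\pi_m'\circ\pi_m$ (which kills $s_0$ and fixes $s_i$ for $i\geq 1$) letter-by-letter to a geodesic word for $s_1^\alpha s_0^\beta$, yielding a word of no greater length representing $s_1^\alpha$. The only difference is cosmetic---you name the composite $\phi$ and spell out why it is well-defined, while the paper invokes $\pi_m'\circ\pi_m$ directly.
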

\begin{proof}
    Let $w_0$ be any representative word for $s_1^\alpha s_0^\beta$ in $G_m$, and $w$ the word resulting from deleting all $s_0$-letters from $w_0$. Then $w=\pi_m'(\pi_m(w_0))=s_1^\alpha$ in $G_m$ and has length at most $|w_0|$.
\end{proof}

\begin{lemma}\label{logarithmicShrinking}
    For all $i=0,\ldots,m-1,$ $|s_i^k|_{G_m}\leq C\log_2|k|$ for some constant $C>0$.
\end{lemma}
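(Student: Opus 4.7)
The plan is to exploit the defining relation $s_{i+1} s_i s_{i+1}^{-1} = s_i^2$ (valid whenever $i \leq m-1$, which is why the hypothesis is required), rewriting it as $s_i^{2n} = s_{i+1} s_i^n s_{i+1}^{-1}$. This gives a doubling trick: each ``halving'' of the exponent costs only two extra letters. Iterating this along the binary expansion of $k$ produces a word whose length is bounded by a constant times $\log_2 |k|$.

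Concretely, I would set $L_i(k) := |s_i^k|_{G_m}$ for a fixed $i \in \{0, \ldots, m-1\}$, noting that $L_i(-k) = L_i(k)$, so we may assume $k > 0$. The base cases are $L_i(0) = 0$ and $L_i(1) = 1$. For the inductive step, the identity $s_i^{2n} = s_{i+1} s_i^n s_{i+1}^{-1}$ yields, for $k \geq 2$,
\[
L_i(k) \;\leq\; L_i\!\left(\lfloor k/2 \rfloor\right) + 3,
\]
where the additional ``$+1$'' in the odd case accounts for the leading $s_i$ in $s_i^{2n+1} = s_i \cdot s_{i+1} s_i^n s_{i+1}^{-1}$. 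Unfolding this recurrence $\lfloor \log_2 k \rfloor$ times lands at an exponent in $\{0,1\}$ and produces the bound $L_i(k) \leq 3 \lfloor \log_2 k \rfloor + C'$ for some absolute constant $C'$. Setting $C := 3 + C'$ (to handle small $k$ as well) gives the conclusion.

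There is no genuine obstacle: the recursion is transparent, and the only thing to check is that the relation $s_{i+1} s_i s_{i+1}^{-1} = s_i^2$ is available, which is exactly the requirement $i+1 \leq m$. One could alternatively phrase the argument inductively on $i$, but the direct binary-expansion approach above is cleaner and gives a uniform constant $C$ not depending on $i$ or on $m$.
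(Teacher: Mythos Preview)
Your proof is correct and essentially the same as the paper's: both rely on the single relation $s_{i+1}s_is_{i+1}^{-1}=s_i^2$ and the resulting doubling/binary-expansion trick. The paper packages this as a reduction to the subgroup $\langle s_i,s_{i+1}\rangle_{G_m}\cong G_1=\BS(1,2)$ and calls the computation there ``straightforward,'' whereas you carry out that straightforward computation explicitly inside $G_m$; the content is identical.
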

\begin{proof}
    The claim is straightforward for $G_1$. For $m>1$, let $$H=\langle s_i,s_{i+1}\rangle_{G_m}=(\pi_i'\circ \pi_{i-1}'\circ\cdots\circ \pi_2')(G_1).$$ This is isomorphic to $G_1$, and $|s_i^k|_{G_m}\leq |s_i^k|_{H}$ because the chosen generating set of $H\leq G_m$ is a subset of the chosen generating set of $G_m$, whence the claim follows. 
\end{proof}

\begin{lemma}\label{largestIndexInDiagram}
    Let $m', m''\leq m$, and let $u $ be a word on $\{s_0,\ldots, s_{m'}\}$ and $v$ a word on $\{s_0,\ldots,s_{m''}\}$ containing an $s_{m'}$- and $s_{m''}$-letter respectively, such that $u$ and $v$ represent conjugate elements of $G_m$. Let $\Delta$ be a reduced annular diagram for $u$ and $v$, and let $k$ be the largest number such that there is an edge in $\Delta$ labeled $s_k$. Then all $s_k$-edges in $\Delta$ appear only in $s_k$-corridors. Moreover, if $\Delta$ has no radial $s_k$-corridors or $s_k$-arches, then $m'=m''=k-1$, otherwise, $k=\max\{m', m''\}$.
\end{lemma}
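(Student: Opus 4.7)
The plan is to address the three claims separately.

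The first claim is local: any 2-cell containing an $s_k$-edge corresponds to a relation $s_j s_{j-1} s_j^{-1} s_{j-1}^{-2}$ with $j \in \{k, k+1\}$, and $j = k+1$ is ruled out by maximality of $k$; thus each $s_k$-edge either lies on a level-$k$ 2-cell (and assembles into an $s_k$-corridor in the standard way) or has no adjacent 2-cell (a degenerate $s_k$-corridor).

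For the ``otherwise'' clause, the bound $\max\{m',m''\} \leq k$ is tautological. A radial $s_k$-corridor places $s_k$-edges on both boundary components of $\Delta$, forcing $m'=m''=k$, while an $s_k$-arch (in either of its two forms) places an $s_k$-letter in $u$ or in $v$, giving $\max\{m',m''\}=k$.

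The content lies in the remaining case: $\Delta$ has no radial $s_k$-corridors and no $s_k$-arches. Then $s_k$ does not appear on either boundary, so $m',m'' \leq k-1$, and every $s_k$-corridor must be a ring. By Lemma \ref{reducedAlongCorrs}, together with the observation that the pentagonal relation distributes its three $s_{k-1}$-edges as $1$ on one side and $2$ on the other of its two $s_k$-edges, the outer side of any $s_k$-corridor built from $N$ cells reads as a reduced power $s_{k-1}^{\pm N}$. A contractible such ring would then certify $s_{k-1}^{\pm N} = 1$ in $G_m$ with $N \geq 1$, contradicting that $s_{k-1}$ has infinite order (obtained iteratively from Lemma \ref{nestedBS}); so every $s_k$-corridor is a non-contractible ring. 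Taking the outermost such ring $R$, the annular subdiagram $\Delta_{\mathrm{out}}$ between the outer boundary of $\Delta$ and the outer side of $R$ contains no $s_k$-edges, so by Lemma \ref{nestedBS} it is a diagram over $G_{k-1}$ exhibiting $u$ as a conjugate of a nonzero power of $s_{k-1}$ in $G_{k-1}$.

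The main obstacle is deducing from this that $u$ must itself contain an $s_{k-1}$-letter, so that $m' = k-1$. When $k \leq 1$ this is immediate because $m' \leq 0$ forces $u$ to be a nontrivial word on $\{s_0\}$. For $k \geq 2$ I plan to invoke the HNN structure on $G_{k-1}$ with base $G_{k-2}$, associated subgroups $\langle s_{k-2}\rangle$ and $\langle s_{k-2}^2\rangle$, and stable letter $s_{k-1}$ (read off from the proof of Lemma \ref{nestedBS}): a nonzero power of the stable letter acts as a hyperbolic isometry on the associated Bass--Serre tree, while every element of $G_{k-2}$ fixes a vertex, so the two cannot be conjugate in $G_{k-1}$. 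If $u$ had no $s_{k-1}$-letter it would lie in $G_{k-2}$, contradicting the preceding paragraph; hence $m' \geq k-1$, and combined with $m' \leq k-1$ we obtain $m' = k-1$. The equality $m'' = k-1$ follows symmetrically from the innermost non-contractible $s_k$-ring. The edge case $k=0$ never arises in this branch: every $s_0$-edge would then lie on the boundary as a degenerate $s_0$-arch, placing us in the ``otherwise'' branch instead.
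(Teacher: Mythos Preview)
Your proof is correct and matches the paper's argument closely for the first two claims and for the reduction of the hard case to ``$u$ is conjugate in $G_{k-1}$ to a nonzero power of $s_{k-1}$.'' The only genuine divergence is in how you extract $m'\geq k-1$ from that conjugacy. The paper stays diagrammatic: in the annular subdiagram $\Delta'$ between $u$ and the nearest $s_k$-ring, every $s_{k-1}$-edge on the ring side lies in an $s_{k-1}$-corridor, and since the word along that side is a \emph{reduced} power of $s_{k-1}$, no such corridor can be an arch---hence each is radial and lands an $s_{k-1}$-letter in $u$. You instead invoke the HNN structure of $G_{k-1}$ over $G_{k-2}$ (equivalently Collins' lemma or the Bass--Serre tree) to see that no element of the base $G_{k-2}$ is conjugate to a nonzero power of the stable letter. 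Both arguments are short; the paper's keeps everything in the diagrammatic language already set up, while yours is more conceptual and avoids tracking corridor types inside $\Delta'$. One small inaccuracy: the outer side of an $s_k$-ring with $N$ cells reads $s_{k-1}^{\pm N}$ \emph{or} $s_{k-1}^{\pm 2N}$ depending on orientation---but this does not affect your argument, since either is a nonzero power.
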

\begin{proof}
   That all $s_k$-edges must appear in $s_k$-corridors holds because the only relations in the presentation for $G_m$ containing an $s_k$-letter yet no $s_{k+1}$ letters is $s_ks_{k-1}s_k^{-1}=s_{k-1}^2$. Moreover, $k\geq \max\{m',m''\}$ by definition. Note that $\Delta$ has an $s_k$-corridor which is radial or an arch if and only if $u$ or $v$ has an $s_k$-letter, which occurs if and only if $k\leq \max\{m',m''\}$, hence $k=\max\{m',m''\}$ in this case. 
   
   Suppose instead that there are no are radial $s_k$-corridors or $s_k$-arches in $\Delta$. Then $u$ and $v$ contain no $s_k$-letters, so $\max\{m',m''\}<k$. Every $s_k$-edge is part of either a contractible or non-contractible $s_k$-ring. If there exists a contractible $s_k$-ring, Lemma \ref{reducedAlongCorrs} implies the word along its outer-boundary is a freely reduced word on $s_{k-1}$ representing the identity in $G_m$. In the proof of Lemma \ref{nestedBS} we observed that $s_i$ has infinite order for all $i$, hence no such $s_k$-corridor can exist. Thus any $s_k$-edge must be contained in a non-contractible $s_k$-ring. Let $Q$ be the $s_k$-corridor closest to the boundary component labeled by $u$, and let $\Delta'$ be the diagram contained between $Q$ and that boundary component. By Lemma \ref{reducedAlongCorrs}, the word along the side of $Q$ facing $\Delta$ is a freely reduced power of $s_{k-1}$, and  $\Delta'$ contains no $s_k$-edges, so every $s_{k-1}$-edge on the boundary component corresponding to $Q$ is contained in a radial $s_{k-1}$-corridor. The set of $s_k$-edges, and thus the set of 2-cells, of $Q$ is non-empty, so this implies $k-1\leq m'$. By a similar argument, $k-1\leq m''$, and we are done.\end{proof}

\subsection{The Baumslag-Gersten group}
Recall the presentation $G=\langle a,t\mid tat^{-1}a(tat^{-1})^{-1}=a^2\rangle$ of the Baumslag-Gersten group.

\begin{lemma}\label{GerstenSubgroupIterated}
    The subgroup $\langle a,tat^{-1},t^2at^{-2},\ldots, t^mat^{-m}\rangle_{G}$ of $G$ is isomorphic to $G_m$ via a map which restricts to a bijection between the chosen generating sets. 
\end{lemma}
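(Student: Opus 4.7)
The plan is to realize $G$ itself as an HNN extension with base group $G_m$, so that the desired embedding becomes the standard embedding of the base of an HNN extension. First I would define $\phi : G_m \to G$ by $s_i \mapsto t^i a t^{-i}$ and verify it is well-defined by checking that the relation $s_i s_{i-1} s_i^{-1} = s_{i-1}^2$ holds in $G$ for each $i$; this is just the $t^{i-1}$-conjugate of the Baumslag-Gersten defining relation. By construction the image of $\phi$ is the target subgroup $K_m := \langle a, tat^{-1}, \ldots, t^m a t^{-m}\rangle_G$, so the content of the lemma is the injectivity of $\phi$.

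For injectivity, I would form the HNN extension $\widehat{G} := G_m \ast_\theta$, with base $G_m$, stable letter $\hat t$, associated subgroups $A := \langle s_0, \ldots, s_{m-1}\rangle_{G_m}$ and $B := \langle s_1, \ldots, s_m\rangle_{G_m}$, and shift identification $\theta : s_i \mapsto s_{i+1}$. This is the step where the existing lemmas do real work: $A \cong G_{m-1}$ on the nose by Lemma \ref{nestedBS}, while restricting the retraction $\pi_m$ of Lemma \ref{retractionToLowerGroups} to $B$ (with the section $\pi_m'$ supplying an explicit inverse) gives $B \cong G_{m-1}$ via $s_i \mapsto s_{i-1}$. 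Composing these identifications realizes $\theta$ as a bona fide isomorphism $A \to B$, so $\widehat G$ is a legitimate HNN extension. I would then Tietze-manipulate the presentation of $\widehat G$: use the HNN relations $\hat t s_i \hat t^{-1} = s_{i+1}$ ($0 \le i \le m-1$) to eliminate $s_1, \ldots, s_m$ in favor of $s_i = \hat t^{\,i} s_0 \hat t^{\,-i}$, after which each relation $s_i s_{i-1} s_i^{-1} = s_{i-1}^2$ collapses to the $\hat t^{\,i-1}$-conjugate of the single identity $(\hat t s_0 \hat t^{-1})\, s_0\, (\hat t s_0 \hat t^{-1})^{-1} = s_0^2$. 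What remains is precisely the Baumslag-Gersten presentation under $s_0 \leftrightarrow a$, $\hat t \leftrightarrow t$, and hence $s_i \leftrightarrow t^i a t^{-i}$.

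Since the base group of an HNN extension embeds in the extension (Britton's lemma), this identifies $G_m$ with a subgroup of $\widehat G \cong G$ via exactly $\phi$, so $\phi$ is injective onto $K_m$ and sends chosen generators to chosen generators bijectively, as required. The main anticipated obstacle is the verification that $\theta$ is a genuine isomorphism of subgroups of $G_m$ rather than merely a well-defined homomorphism on generators; this is precisely what combining Lemma \ref{nestedBS} with the retraction and section of Lemma \ref{retractionToLowerGroups} accomplishes, after which everything else is standard HNN and Tietze bookkeeping.
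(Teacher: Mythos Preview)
Your proposal is correct and follows essentially the same approach as the paper: both arguments build the HNN extension of $G_m$ with stable letter conjugating $\langle s_0,\ldots,s_{m-1}\rangle$ to $\langle s_1,\ldots,s_m\rangle$, justify that this shift is a genuine isomorphism via Lemmas~\ref{nestedBS} and~\ref{retractionToLowerGroups}, Tietze-reduce the resulting presentation to the Baumslag--Gersten presentation, and conclude by the embedding of the base in an HNN extension. The only difference is cosmetic---you frame things by first defining $\phi$ and explicitly naming Britton's lemma, whereas the paper leaves these implicit.
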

\begin{proof}
    By Lemmas \ref{nestedBS} and \ref{retractionToLowerGroups}, the subgroups of $G_m$ generated by $\{s_1,\ldots, s_m\}$ and $\{s_0,\ldots, s_{m-1}\}$ are isomorphic to each other by the map $i_{m-1}\circ \pi_{m-1}$. Let $G'$ be the HNN extension along this isomorphism with stable letter $t$, so $$G'=\langle s_0,s_1,\ldots, s_m,t\mid s_js_{j-1}s_j^{-1}=s_j^2,\ ts_{j-1}t^{-1}=s_{j}\ \forall j=1,\ldots, m\rangle .$$  For $j>1$, the relation $s_js_{j-1}s_j=s_{j-1}^2$ follows from the relations $s_1s_0s_1^{-1}=s_0^2$ and $s_j=t^js_0t^{-j}=s_j$. We may thus omit these relations, as well as the generators $s_j$ for $j>1$, giving the presentation as \begin{equation}\tag{$\ast$}\label{BGPres}G'=\langle s_0,s_1,t\mid s_1s_{0}s_1^{-1}=s_0^2,\ ts_0t^{-1}=s_{1}\rangle .\end{equation} The groups $G'$ and $G$ are isomorphic via the homomorphism determined by $s_0\mapsto a,s_1\mapsto tat^{-1},t\mapsto t$. This map takes $s_i$ to $t^iat^{-i}$, so we are done.
\end{proof}
\begin{rem}\label{newGerstenPres}
    In light of the above proof, we will use the presentation \eqref{BGPres} for $G$ throughout the rest of this paper. The length function $|\cdot |_G$ will always be with respect to this presentation, and  we will make no distinction in our notation between an element of $\langle s_0,\ldots, s_m\rangle_{G}$ and an element of $G_m$.
\end{rem}
\begin{rem}\label{IteratedToGerstenGeneratorWords}
     If $g\in\langle s_0,\ldots, s_m\rangle_{G}$, then $|g|_G\leq m|g|_{G_m}$.
\end{rem}

For the next lemma, let $H_m=\langle s_{-m},s_{-m+1},\ldots,s_0,s_1,\ldots, s_m\rangle_{G}$. After conjugating by $t^m$ and appealing to Lemma \ref{GerstenSubgroupIterated}, we see that this group is isomorphic to $\langle s_{0},s_1,\ldots,s_{2m}\rangle_{G}\simeq G_{2m}$ via a bijection on the chosen generating sets, so (committing a minor abuse of notation) $|x|_{H_m}=|t^mxt^{-m}|_{G_{2m}}$.
\begin{lemma}\label{lowerWordBoundGersten}
     Let $g\in \langle \ldots, s_{-2}, s_{-1},s_0,s_1,\ldots\rangle$, and let $M=|g|_G$. Then $g\in H_{M+1}$ and $M\geq |g|_{H_{M+1}}$.
\end{lemma}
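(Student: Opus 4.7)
The plan is to take a geodesic word for $g$ in $G$, split it at every $t^{\pm 1}$ letter, and rewrite each block of $\{s_0, s_1\}$-letters by conjugating it into the appropriate higher-indexed generator $s_i$ of $H_{M+1}$.

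First I will write a length-$M$ word for $g$ as
\[
w \;=\; u_0\, t^{\epsilon_1}\, u_1\, t^{\epsilon_2}\, u_2\, \cdots\, t^{\epsilon_k}\, u_k,
\]
where each $u_j$ is a (possibly empty) word on $\{s_0^{\pm 1}, s_1^{\pm 1}\}$, each $\epsilon_j \in \{\pm 1\}$, and $k$ is the total number of $t^{\pm 1}$-letters in $w$. Setting $\tau_j := \epsilon_1 + \cdots + \epsilon_j$ with $\tau_0 := 0$, a routine telescoping manipulation yields the identity
\[
g \;=\; u_0 \cdot \prod_{j=1}^{k} \bigl(t^{\tau_j}\, u_j\, t^{-\tau_j}\bigr) \cdot t^{\tau_k} \quad \text{in } G.
\]
Using $t^n s_i t^{-n} = s_{i+n}$ (immediate from the definition $s_i = t^i s_0 t^{-i}$), each factor $t^{\tau_j}\, u_j\, t^{-\tau_j}$ then rewrites as a word of the same length as $u_j$ on the two generators $\{s_{\tau_j}^{\pm 1},\, s_{\tau_j + 1}^{\pm 1}\}$.

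For this rewriting to land entirely in $H_{M+1}$, I need the trailing factor $t^{\tau_k}$ to disappear, which is where the hypothesis $g \in \langle \ldots, s_{-1}, s_0, s_1, \ldots \rangle$ comes in. This subgroup equals the normal closure of $s_0$ in $G$, and hence is the kernel of the homomorphism $G \to \ZZ$ sending $t \mapsto 1$ and $s_0, s_1 \mapsto 0$, which is visibly well-defined on the presentation~\eqref{BGPres}. Since $g$ lies in this kernel, $\tau_k = 0$ and the trailing factor is trivial.

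Finally, the resulting word for $g$ has length $|u_0| + \sum_{j=1}^{k} |u_j| = M - k \leq M$, and every index appearing is $0$, $1$, $\tau_j$, or $\tau_j + 1$ for some $j \leq k$. The only step requiring any care is the index bound: I must check these all lie in $\{-(M+1), \ldots, M+1\}$, but this follows immediately from $|\tau_j| \leq k \leq M$. So $g \in H_{M+1}$ and $|g|_{H_{M+1}} \leq M$, as claimed. No real obstacle arises; the argument is essentially just bookkeeping once the kernel description of the normal closure is in hand.
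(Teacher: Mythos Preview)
Your proof is correct and follows essentially the same approach as the paper: take a geodesic word on $\{s_0,s_1,t\}$, observe that the $t$-exponent sum vanishes because $g$ lies in the kernel of $G\to\ZZ$, and replace each $s_j$-letter by $s_{j+\sigma}$ where $\sigma$ is the running $t$-exponent sum of the prefix. Your explicit telescoping identity and kernel justification are slightly more detailed than the paper's one-line version, but the underlying argument is the same.
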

\begin{proof}
     Let $w$ be a minimal length word on $\{s_0,s_1,t\}$ representing $g$, so $|w|\leq M$. By the definition of $\ldots,s_{-2}, s_{-1},s_0,s_1,\ldots$, the $t$-exponent sum of $w$ is $0$. Let $w'$ be the word on $\ldots, s_{-2}, s_{-1},s_0,s_1,\ldots$ which is the result of deleting all $t$-letters of $w$ and replacing every $s_j$-letter $e$ with an $s_{j+\sigma}$-letter, where $\sigma$ is the $t$-exponent sum of the prefix of $w$ up to $e$. Then $w'$ is a word on $S=\{s_{-M-1},s_{-M},\ldots,s_{-1},s_0,s_1,\ldots, s_{M+1}\}$, the chosen generating set of $H_{M+1}$, with $|w'|\leq |w|$.
\end{proof}

\subsection{Rank and Pinches}
\begin{defn}\label{rankDef}
    For $g\in G_m$, we define the \textit{rank} $\rk(g)$ as the smallest $r\leq m$ such $g$ is conjugate to an element of $\langle s_0,\ldots, s_r\rangle_{G_m}=G_r$.
\end{defn}
\begin{defn}\label{pinchAndReducedDef}
    Given a word $w$ on $\{s_0,\ldots,s_m\}$, we say a subword $P$ is a \textit{pinch} if it is of the form $P=s_i^{\delta}w's_i^{-\delta}$, where $w'\in \langle s_{i-1}\rangle$ if $\delta=1$ and $w'\in \langle s_{i-1}^2\rangle$ if $\delta=-1$, and where $i>\rk(w)$. We say every subword of $w'$ is \textit{involved} in the pinch $P$, and call the first and last letters of $P$ the \textit{endpoints} of $P$. When specificity is required, we will also call $P$ an $i$\textit{-pinch}. Involvement induces a partial order on $i$-pinches of cyclic conjugates of $w$. 
\end{defn}

\begin{defn}\label{tPinch}
    Let $w$ be a word on $\{s_0,s_1,t\}$. A \textit{$t$-pinch} is a word of the form $t^{\delta}w't^{-\delta}$ with $\delta=\pm1$, where $w'\in \langle s_0\rangle$ if $\delta=1$ or $w'\in\langle s_1\rangle$ if $\delta=-1$. The terms \textit{involvement} and \textit{endpoints} are defined analogously. Two $t$-letters $t_1$ and $t_2$ are \textit{$t$-partners} if they are the endpoints of a $t$-pinch in some cyclic conjugate of $w$. As with $i$-pinches in $G_m$, involvement is a partial order on the set of $t$-pinches in cyclic conjugates of $w$. If no cyclic conjugate of $w$ has any $t$-pinches, then we say it is \textit{cyclically Britton-reduced} following \cite{DMW}.
\end{defn}

We conclude this section with two useful results concerning pinches in $G_m$.

\begin{lemma} \label{higherGeneratorsImpliesPinch}
    Suppose $w$ is a word on $\{s_0,\ldots, s_j\}$  such that $\rk(w)<j$ and $w$ contains at least one $s_j$-letter. Then there exists a set $\mathcal{P}$ of $j$-pinches of cyclic conjugates of $w$ such that every $s_j$-letter is an endpoint of some $j$-pinch in $\mathcal{P}$ and, for all $P_1,P_2\in \mathcal{P}$, either they have no letters in common or one is involved in the other.
\end{lemma}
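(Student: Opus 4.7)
The plan is to analyse a reduced annular diagram $\Delta$ witnessing that $w$ is conjugate in $G_m$ to some word $v$ on $\{s_0,\ldots,s_{j-1}\}$, which exists since $\rk(w)<j$; here $\Delta$ has outer boundary word $w$ and inner boundary word $v$ (if the element represented by $w$ is trivial, replace $\Delta$ by a reduced van Kampen diagram for $w$ and run the same argument). Setting $m'=j$ and $m''\le j-1$ to be the largest indices appearing in $w$ and $v$, we have $m'\ne m''$, so Lemma \ref{largestIndexInDiagram} forces the largest index $k$ appearing on any edge of $\Delta$ to equal $j$; moreover $\Delta$ must contain at least one $s_j$-arch, since the absence of $s_j$-letters in $v$ rules out radial $s_j$-corridors.

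Next I would classify the $s_j$-corridors of $\Delta$. Contractible $s_j$-rings are ruled out in a reduced diagram by the infinite-order argument given in the proof of Lemma \ref{largestIndexInDiagram}, so every $s_j$-corridor is either an $s_j$-arch or a non-contractible $s_j$-ring. If non-contractible rings exist, select the one closest to the outer boundary; otherwise take all of $\Delta$. The resulting annular subregion is bounded on one side by $w$ and on the other, by Lemma \ref{reducedAlongCorrs}, by a freely reduced power of $s_{j-1}$, so it contains no radial $s_j$-corridor and no $s_j$-ring of either type. Its only $s_j$-corridors are therefore $s_j$-arches, and in particular every $s_j$-letter of $w$ is an endpoint of some $s_j$-arch in this subregion.

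I would then convert each such arch $A$ into a $j$-pinch. Let $e_1=s_j^{\delta}$ and $e_2=s_j^{-\delta}$ be the outer-boundary endpoints of $A$, and let $w_A$ be the subword of $w$ cut out by $A$ together with the disk-like region the two bound. The long side of $A$ facing this disk is a freely reduced power of $s_{j-1}$ (Lemma \ref{reducedAlongCorrs}), so $w_A$ equals that power in $G_m$; the asymmetry of the defining relation $s_j s_{j-1} s_j^{-1} = s_{j-1}^2$ ensures that the disk-facing side has length $n$ when $\delta=+1$ and length $2n$ when $\delta=-1$, where $n$ is the number of $2$-cells of $A$. This places $w_A$ in $\langle s_{j-1}\rangle$ or $\langle s_{j-1}^2\rangle$ exactly as Definition \ref{pinchAndReducedDef} demands, so $s_j^{\delta} w_A s_j^{-\delta}$ is a $j$-pinch in the cyclic conjugate of $w$ that brings $e_1, e_2$ adjacent.

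Let $\mathcal{P}$ be the set of pinches so produced, one per arch. Every $s_j$-letter of $w$ is then an endpoint of some pinch in $\mathcal{P}$, and the laminar (nested-or-disjoint) structure of arches in an annulus translates directly into the required dichotomy on $\mathcal{P}$: nested arches give pinches whose $w'$'s are nested, so one is involved in the other, while disjoint arches give pinches whose subwords of $w$ are disjoint. I expect the main subtlety to be the orientation check in the third paragraph --- confirming that the parity of the $s_{j-1}$-power at each arch matches the sign $\delta$ prescribed by the pinch definition --- which amounts to tracking which long side of an $s_j$-corridor carries $s_{j-1}^n$ versus $s_{j-1}^{2n}$ as a function of the corridor's orientation relative to the outer boundary.
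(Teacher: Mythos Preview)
Your proof is correct and follows the same approach as the paper: take a reduced annular diagram witnessing the conjugacy of $w$ with a word on $\{s_0,\ldots,s_{j-1}\}$, observe that every $s_j$-letter of $w$ lies in an $s_j$-arch, and read off the required pinches (and their nested-or-disjoint structure) from the arches. The paper's argument is terser---it omits your subdiagram step (unneeded, since boundary $s_j$-edges already lie in arches regardless of whether non-contractible $s_j$-rings are present elsewhere in $\Delta$) and simply asserts that ``every $s_j$-arch corresponds to a $j$-pinch,'' glossing over the orientation/parity check that you correctly flag as the main subtlety.
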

\begin{proof}
 By the definition of $\rk(w)$, we see $w$ is conjugate to a word $w'$ on $\{s_0,\ldots, s_{\rk(w)}\}$. By Lemma \ref{largestIndexInDiagram}, all $s_j$-letters in $w$ are part of $s_j$-arches in any annular diagram witnessing the conjugacy of $w$ and $w'$. Every $s_j$-arch corresponds to a $j$-pinch in some cyclic conjugate of $w$, and no $s_j$-arches can cross. Therefore, the collection of such arches gives the desired $\mathcal{P}$.
\end{proof}
Replacing annular diagrams with van Kampen diagrams in this argument, we also obtain:
\begin{lemma}\label{higherGeneratorInPinchVanKampen}
    Let $w$ be a word on $\{s_0,\ldots, s_{j}\}$. If $w\in\langle s_0,\ldots, s_{j-1}\rangle$ there exists a set $\mathcal{P}$ of $j$-pinches of $w$ such that every $s_j$-letter is an endpoint of some $j$-pinch in $\mathcal{P}$ and, for all $P_1,P_2\in \mathcal{P}$, either they have no letters in common or one is involved in the other.
\end{lemma}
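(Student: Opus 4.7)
The plan is to imitate the proof of Lemma~\ref{higherGeneratorsImpliesPinch} with a van Kampen diagram in place of an annular diagram. Since $w\in \langle s_0,\ldots,s_{j-1}\rangle_{G_m}$, there is a word $w'$ on $\{s_0,\ldots,s_{j-1}\}$ representing the same element of $G_m$ as $w$, so $ww'^{-1}=1$ in $G_m$. Let $\Delta$ be a reduced van Kampen diagram over the given presentation of $G_m$ with boundary word $ww'^{-1}$, read from a basepoint so that $w$ labels one arc of $\partial\Delta$ (call it the $w$-arc) and $w'^{-1}$ labels the complementary arc. Since $w\in \langle s_0,\ldots,s_{j-1}\rangle_{G_m}$ we have $\rk(w)\leq j-1<j$, so the index condition in Definition~\ref{pinchAndReducedDef} is automatic for any candidate $j$-pinch of $w$.

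First I would show that every $s_j$-edge of $\Delta$ lies in an $s_j$-corridor whose two boundary ends both lie on the $w$-arc. Following the argument of Lemma~\ref{largestIndexInDiagram}, let $k\geq j$ be the largest index of a generator labelling an edge of $\Delta$. The only relator containing $s_k$ but no $s_{k+1}$ is $s_k s_{k-1}s_k^{-1}s_{k-1}^{-2}$, so every $s_k$-edge lies in an $s_k$-corridor. If $k>j$, then $\partial\Delta$ contains no $s_k$-letter, so every $s_k$-corridor would be a ring; but $\Delta$ is a disk, so any such ring would be contractible, and by Lemma~\ref{reducedAlongCorrs} its outer word would be a freely reduced power of $s_{k-1}$ representing the identity in $G_m$, which is impossible since $s_{k-1}$ has infinite order by (the proof of) Lemma~\ref{nestedBS}. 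Hence $k=j$, and because $w'$ has no $s_j$-letter, both boundary ends of every $s_j$-corridor lie on the $w$-arc.

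Then I would read the desired pinches off these corridors. Each $s_j$-corridor $Q$ pairs two $s_j$-letters of $w$ with opposite signs $\delta$ and $-\delta$ when $w$ is read linearly; the subword $w''$ of $w$ strictly between them equals in $G_m$ the word along one side of $Q$, and the shape of the relator $s_j s_{j-1}s_j^{-1}s_{j-1}^{-2}$ forces this side to represent an element of $\langle s_{j-1}\rangle$ when $\delta=+1$ and of $\langle s_{j-1}^2\rangle$ when $\delta=-1$, so $s_j^{\delta}w''s_j^{-\delta}$ is a $j$-pinch of $w$. Planarity of $\Delta$ guarantees that distinct $s_j$-corridors pair their boundary letters in a non-crossing way, so the resulting pinches are either disjoint or one is strictly involved in the other. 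Taking one pinch per $s_j$-corridor yields the required set $\mathcal{P}$. The only delicate point I anticipate is the small bookkeeping check that the sign $\delta$ matches the correct alternative ($\langle s_{j-1}\rangle$ versus $\langle s_{j-1}^2\rangle$) in Definition~\ref{pinchAndReducedDef}; this is a short but careful verification from the specific shape of the relator, and the rest of the proof is a direct transplantation of the annular argument into the disk setting.
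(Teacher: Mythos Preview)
Your proposal is correct and follows essentially the same approach as the paper: the paper's entire proof is the single sentence ``Replacing annular diagrams with van Kampen diagrams in this argument, we also obtain [the lemma],'' and you have carried out precisely that replacement, with more detail than the paper provides (including the verification that $k=j$ and the $\delta$-matching for Definition~\ref{pinchAndReducedDef}).
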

Note that the elements of $\mathcal{P}$ here are pinches in $w$, not cyclic conjugates of $w$.
\section{Upper Bound For $G_m$}

    \begin{lemma}\label{powerOfGeneratorLength}
        Let $w$ be a word on $\{s_0,s_1,\ldots, s_{i+1}\}$ of length $n$ with $p$ many $s_{i+1}$-letters and representing an element of $\langle s_i\rangle_{G_m}$. Then $w=s_i^\ell$ for $|\ell|\leq (n-p)2^p$. In particular, $|\ell|\leq 2^n$.
    \end{lemma}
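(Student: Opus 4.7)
The plan is induction on the number $p$ of $s_{i+1}$-letters in $w$. Throughout, I will use that the $s_{i+1}$-exponent-sum map is a well-defined homomorphism $G_m \to \ZZ$ (the only relation involving $s_{i+1}$ is $s_{i+1}s_is_{i+1}^{-1} = s_i^2$, on which the $s_{i+1}$-exponent sum vanishes), so the membership $w \in \langle s_i\rangle_{G_m}$ forces $p$ to be even.

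For the base case $p=0$, I would use Lemma \ref{nestedBS} to identify $w$ with an element of $G_i = \langle s_0, \ldots, s_i\rangle_{G_m}$. Abelianizing $G_i$ kills $s_j$ for every $j < i$ (from the relation $s_{j+1}s_js_{j+1}^{-1} = s_j^2$), giving $G_i^{\mathrm{ab}} \cong \ZZ$ generated by the image of $s_i$. Since $s_i$ has infinite order in $G_m$ (also by Lemma \ref{nestedBS}), one deduces that $\ell$ equals the $s_i$-exponent sum of $w$, whose absolute value is at most $n = (n-0)\cdot 2^0$.

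For the inductive step with $p \ge 2$, I would invoke Lemma \ref{higherGeneratorInPinchVanKampen} (which applies since $\langle s_i\rangle \subseteq \langle s_0, \ldots, s_i\rangle$) to obtain a nested family $\mathcal P$ of $(i+1)$-pinches whose endpoints exhaust the $s_{i+1}$-letters of $w$, and pick an innermost pinch $P = s_{i+1}^\delta u\, s_{i+1}^{-\delta} \in \mathcal{P}$. The nesting condition of $\mathcal{P}$ forces every $s_{i+1}$-letter interior to $P$ to lie inside some sub-pinch of $P$ in $\mathcal P$, so the innermost hypothesis leaves $u$ as a word on $\{s_0,\ldots,s_i\}$ only. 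Its length $q$ satisfies $q \le n - p$ since those $q$ letters are among the $n-p$ non-$s_{i+1}$-letters of $w$. The base case applied to $u$ then gives $u = s_i^k$ in $G_m$ with $|k| \le q$ (and $k$ even when $\delta = -1$).

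The main move is then to replace $P$ in $w$ with the equal word $s_i^{2^\delta k}$ (interpreted as $s_i^{2k}$ if $\delta = +1$ and $s_i^{k/2}$ if $\delta = -1$), producing a new word $\tilde w$, still equal to $s_i^\ell$ in $G_m$, with $\tilde p := p - 2$ many $s_{i+1}$-letters and length $\tilde n$ satisfying $\tilde n - \tilde p \le (n-p) + (|2^\delta k| - q)$. Using $|k|\le q\le n-p$, this is at most $2(n-p)$ when $\delta=+1$ and at most $n-p$ when $\delta=-1$. The inductive hypothesis applied to $\tilde w$ then yields
\[
|\ell| \;\le\; (\tilde n - \tilde p)\, 2^{\tilde p} \;\le\; 2(n-p)\cdot 2^{p-2} \;=\; (n-p)\, 2^{p-1} \;\le\; (n-p)\, 2^p.
\]
The final inequality $(n-p)\,2^p \le 2^n$ reduces to $(n-p) \le 2^{n-p}$ and is routine. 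The main subtle point I anticipate is verifying carefully from the partial-order nesting property in Lemma \ref{higherGeneratorInPinchVanKampen} that the interior of an innermost pinch cannot contain stray $s_{i+1}$-letters; once this is established, the induction is essentially bookkeeping.
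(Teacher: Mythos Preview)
Your argument is correct, but it takes a different route from the paper. The paper's proof is a two-liner: apply Corollary~\ref{noLowerGeneratorsInMinimumLengthWord} (the retraction that deletes all $s_j$-letters with $j<i$) to replace $w$ by a word on $\{s_i,s_{i+1}\}$ of the same or smaller length and with the same number $p$ of $s_{i+1}$-letters, and then appeal to the elementary computation in $\langle s_i,s_{i+1}\rangle_{G_m}\cong\BS(1,2)$. Your proof instead works directly with the original word on $\{s_0,\ldots,s_{i+1}\}$, using abelianisation of $G_i$ for the base case and Lemma~\ref{higherGeneratorInPinchVanKampen} plus an innermost-pinch replacement for the inductive step. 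Both are valid; the paper's route is shorter because the retraction immediately localises the problem to $\BS(1,2)$, whereas your induction effectively carries out the $\BS(1,2)$ computation by hand while simultaneously handling the extraneous low-index letters. Your approach has the mild advantage of actually yielding the sharper bound $(n-p)2^{p-1}$ for $p\ge 2$, and of making explicit what the paper leaves as a ``straightforward computation''. The point you flagged as subtle (that an innermost pinch has no interior $s_{i+1}$-letters) is fine: any such letter would be an endpoint of some $P'\in\mathcal P$, and the nesting dichotomy forces $P'$ to lie in the interior of $P$ (the alternative, that $P$ lies in the interior of $P'$, is impossible since $P$ contains an endpoint of $P'$).
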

    \begin{proof}
        By Corollary \ref{noLowerGeneratorsInMinimumLengthWord}, we may assume $w$ is a word on $\{s_i,s_{i+1}\}$. The claim follows by a straightforward computation in $\langle s_{i},s_{i+1}\rangle_{G_m}=\BS(1,2)$.
    \end{proof}
\begin{lemma}\label{conjugatingToRank}
    Let $w$ be a word on $\{s_0,\ldots, s_m\}$ of rank $r$ and length $n$, and let $r\leq i\leq m$. Then there exists a word $w_i$ on $\{s_0,\ldots, s_i\}$ of length at most $E(m-i,n)$ which is conjugate in $G_m$ to $w$ via a word $\gamma_i$ on $\{s_0,\ldots, s_m\}$ of length at most $(m-i)E(m-i-1,n)$. \end{lemma}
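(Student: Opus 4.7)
The plan is to induct downward on $i$ from $m$ to $r$, constructing the pairs $(w_i, \gamma_i)$ one index at a time. For the base case $i = m$, I would take $w_m := w$ and $\gamma_m := \varepsilon$; both bounds are immediate since $|w| = n = E(0,n)$ and $(m-m)E(m-m-1,n) = 0$. The inductive step reduces the alphabet by one letter by eliminating every $s_{i+1}$-letter from $w_{i+1}$.

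The key tool is Lemma \ref{higherGeneratorsImpliesPinch}: since $\rk(w_{i+1}) = r \leq i < i+1$, it provides a non-crossing family $\mathcal{P}$ of $(i+1)$-pinches in cyclic conjugates of $w_{i+1}$ whose endpoints exhaust every $s_{i+1}$-letter. I would first perform a single cyclic conjugation, by some prefix $p$ of $w_{i+1}$ with $|p| \leq |w_{i+1}|$, to a conjugate $\tilde w_{i+1} = p^{-1} w_{i+1} p$ in which every pinch of $\mathcal{P}$ appears as a genuine contiguous subword. This is possible because, viewing the pinches as non-crossing chords in the cyclic arrangement of $w_{i+1}$, one can always place the seam at an edge that is not strictly interior to any outermost pinch---either outside the lone outermost pinch or between two distinct outermost ones---and inner pinches, being nested inside, are automatically aligned. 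Then I would process the pinches of $\tilde w_{i+1}$ from innermost to outermost, replacing each $P = s_{i+1}^\delta w' s_{i+1}^{-\delta}$ by the appropriate power of $s_i$ via $s_{i+1} s_i s_{i+1}^{-1} = s_i^2$. Each replacement is an equality in $G_m$ and introduces no new $s_{i+1}$-letters, so the resulting word $w_i$ is on $\{s_0, \ldots, s_i\}$ and equals $\tilde w_{i+1}$ in $G_m$. Finally, I would set $\gamma_i := \gamma_{i+1} p^{-1}$.

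For the length bounds, Lemma \ref{powerOfGeneratorLength} implies that for each outermost pinch $P^*$ of length $L^*$, its middle $w'^*$ represents $s_i^{\ell^*}$ with $|\ell^*| \leq 2^{L^* - 2}$, so the replacement contributes length at most $2^{L^* - 1}$. Using the elementary inequality $\sum_k 2^{L_k^* - 1} \leq 2^{|w_{i+1}| - 1}$ (valid because each $L_k^* \geq 2$ and $\sum_k L_k^* \leq |w_{i+1}|$) and accounting for letters outside all outermost pinches, I would deduce $|w_i| \leq |w_{i+1}| + 2^{|w_{i+1}|-1} \leq 2^{|w_{i+1}|} \leq E(m-i, n)$. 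The conjugator bound follows from $|\gamma_i| \leq |\gamma_{i+1}| + |p| \leq (m-i-1) E(m-i-1, n) + E(m-i-1, n) = (m-i) E(m-i-1, n)$, using monotonicity of $E$ in its first argument. The main technical hurdle I anticipate will be verifying carefully that one cyclic conjugation suffices to linearize the entire laminar pinch family, and that the cascade of innermost-first pinch reductions---each potentially doubling a power of $s_i$---really does stay within the stated exponential bound; both will rely on combining the non-crossing structure of $\mathcal{P}$ from Lemma \ref{higherGeneratorsImpliesPinch} with the exponent bound from Lemma \ref{powerOfGeneratorLength}.
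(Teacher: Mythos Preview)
Your proposal is correct and follows essentially the same route as the paper's proof: downward induction on $i$, a single cyclic conjugation to align the non-crossing family of $(i+1)$-pinches from Lemma~\ref{higherGeneratorsImpliesPinch} as genuine subwords, replacement of each maximal pinch by a power of $s_i$ via Lemma~\ref{powerOfGeneratorLength}, and the same conjugator bookkeeping $\gamma_i = \gamma_{i+1}\,p^{-1}$. The only cosmetic difference is that you propose processing pinches innermost-to-outermost; the paper simply applies Lemma~\ref{powerOfGeneratorLength} directly to each maximal pinch $P_j$ (its middle already lies in $\langle s_i\rangle$ by the definition of an $(i+1)$-pinch), which makes the inner pinches irrelevant and streamlines the length estimate to $\sum_j 2^{|P_j|} + \sum_j |q_j| \leq 2^{|w_{i+1}|}$.
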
     
     \begin{proof}
         We proceed by downward induction on $i$. The base case follows by taking $w_m=w$ and recalling that $E(0,n)=n$. For the inductive step, let $w_{i+1}$ and $\gamma_{i+1}$ be given. If $w_{i+1}$ has no $s_{i+1}$-letters, we are done. Otherwise, by Remark \ref{higherGeneratorsImpliesPinch} there exists a set $\mathcal{P}$ of $(i+1)$-pinches in $w_{i+1}$ such that every $s_{i+1}$-letter is involved in or an endpoint of some $p\in \mathcal{P}$, and, for all $P_1,P_2\in \mathcal{P}$, one is involved in the other or they have no letters in common. By the latter property of $\mathcal{P}$, there exists at least one maximal pinch $P\in \mathcal{P}$ of some cyclic conjugate of $w_{i+1}$ under the relation of involvement. Let $\gamma$ be the prefix of $w_{i+1}$ ending at the leftmost endpoint of $P$ if $P$ is a pinch of $w_{i+1}$, or at the rightmost endpoint if $P$ is not. Every element of $\mathcal{P}$ is a pinch in the cyclic conjugate $w_i'=\gamma^{-1}w_{i+1}\gamma$ of $w_{i+1}$.
         
         We may write $w_i'=q_0P_1q_1P_2\cdots q_{m-1}P_mq_{m}$, where each $P_i$ is a maximal element of $\mathcal{P}$ and each subword $q_j$ is disjoint from every element of $\mathcal{P}$. Note that $$|q_0|+\sum_{j=1}^m|q_j|+|P_j|= |w_i'|=|w_{i+1}|\leq E(m-i-1,n),$$ and that each $q_j$ contains no $s_{i+1}$-letter. By Lemma \ref{powerOfGeneratorLength}, $P_j=s_i^{\ell_j}$ for some $|\ell_j|\leq 2^{|P_j|}$, thus the word $w_i=q_0s_i^{\ell_1}q_1s_i^{\ell_2}\cdots q_{m-1}s_i^{\ell_m}q_{m}$ has length $$|q_0|+\sum_{j=1}^m|q_j|+2^{|P_j|}\leq  2^{|w_{i+1}|}\leq 2^{E(m-i-1,n)}=E(m-i,n).$$ Since $w_i=w_i'$ in $G_m$, $w$ and $w_i$ are conjugate via $\gamma_{i+1}\gamma^{-1}$, which has length at most $$|\gamma_{i+1}|+|\gamma|\leq |\gamma_{i+1}|+|w_{i+1}|\leq (m-i-1)E(m-i-2,n)+E(m-i-1,n)=(m-i)E(m-i-1,n).$$
     \end{proof}
     
     \begin{lemma}\label{conjugateEntirelyInTwoConsecutiveGenerators}
         Let $u$ and $v$ be words on $\{s_0,\ldots, s_i\}$ such that $u$ and $v$ represent elements of $\langle s_i,s_{i-1}\rangle_{G_m}$ which are conjugate in $\langle s_0,\ldots, s_i\rangle_{G_m}$. Then $c(u,v)\leq \lambda( |u|+|v|)$ for some constant $\lambda>0$.
     \end{lemma}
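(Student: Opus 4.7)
My plan is to reduce the statement to the (known) linear conjugator length of $BS(1,2)$. Set $H := \langle s_{i-1}, s_i\rangle_{G_m}$.

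First, I would apply Corollary \ref{noLowerGeneratorsInMinimumLengthWord} to replace $u$ and $v$ with words $u'$ and $v'$ on $\{s_{i-1}, s_i\}$ satisfying $|u'| \le |u|$ and $|v'| \le |v|$ and representing the same elements of $G_m$. It then suffices to bound $c(u', v')$ by $\lambda(|u'| + |v'|)$. As observed in the proofs of Lemmas \ref{powerOfGeneratorLength} and \ref{logarithmicShrinking}, $H \cong BS(1,2) = G_1$; concretely, the composition of retractions $\rho := \pi_2 \circ \pi_3 \circ \cdots \circ \pi_i : G_i \to G_1$ (each $\pi_j$ from Lemma \ref{retractionToLowerGroups}) restricts to an isomorphism $\rho|_H : H \to G_1$ matching the chosen generators via $s_{i-1} \mapsto s_0$ and $s_i \mapsto s_1$, with inverse provided by the embedding $\sigma := \pi_i' \circ \pi_{i-1}' \circ \cdots \circ \pi_2' : G_1 \hookrightarrow G_i$ (whose image is exactly $H$).

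The key conceptual step is to show that $u'$ and $v'$ are conjugate in $H$, not merely in the ambient $G_i$. Given $\gamma \in G_i$ with $\gamma u' \gamma^{-1} = v'$, applying the homomorphism $\rho$ yields $\rho(\gamma) \cdot \rho|_H(u') \cdot \rho(\gamma)^{-1} = \rho|_H(v')$ in $G_1$. Applying the homomorphism $\sigma$ to both sides and using $\sigma \circ \rho|_H = \Id_H$ gives $\sigma(\rho(\gamma)) \cdot u' \cdot \sigma(\rho(\gamma))^{-1} = v'$, exhibiting $\sigma(\rho(\gamma)) \in H$ as a conjugator in $H$.

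Next I would invoke the fact that $\CL_{BS(1,2)}(n) \preceq n$ --- a folkloric consequence of the semidirect-product structure $BS(1,2) \cong \ZZ[1/2] \rtimes \ZZ$ (see the survey \cite{BRS}) --- to obtain $\eta \in H$ with $|\eta|_H \le C(|u'|_H + |v'|_H) \le C(|u'| + |v'|) \le C(|u| + |v|)$ for some constant $C$. Since the chosen generators of $H$ form a subset of those of $G_m$, we have $|\eta|_{G_m} \le |\eta|_H$, completing the proof with $\lambda := C$.

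The main obstacle is the appeal to $\CL_{BS(1,2)}(n) \preceq n$. Although folklore, establishing it carefully requires a case split on whether the $\ZZ$-coordinate of an element of $\ZZ[1/2] \rtimes \ZZ$ is zero or nonzero, followed by an explicit bounded construction of a conjugator in each case using the identity $(q_1, n_1)(q, n)(q_1, n_1)^{-1} = (q_1(1 - 2^n) + 2^{n_1}q, n)$ and the compressed representation of powers of $a$ via the relation $tat^{-1} = a^2$. By comparison, the retraction argument in the previous paragraph is essentially automatic once the splitting of $\rho$ by $\sigma$ is recognized.
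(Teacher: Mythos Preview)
Your argument is correct and follows the same route as the paper: use the retraction $\rho$ (and its section $\sigma$) from Lemma~\ref{retractionToLowerGroups} to pull the conjugacy in $G_i$ down to one in $H\cong\BS(1,2)$, and then cite the linear conjugator length of $\BS(1,2)$. The paper compresses your steps into a single sentence (``Applying Lemma~\ref{retractionToLowerGroups}, we see $u$ and $v$ are conjugate in $\langle s_i,s_{i-1}\rangle_{G_m}$'') and cites \cite{sale2016conjugacy} rather than \cite{BRS}, but your explicit construction of $\sigma(\rho(\gamma))$ makes the implicit reasoning precise; note only that your invocation of Corollary~\ref{noLowerGeneratorsInMinimumLengthWord} should be read as applying it inside $G_i$ (so that the output word lies on $\{s_{i-1},s_i\}$ rather than $\{s_{i-1},\ldots,s_m\}$).
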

     \begin{proof}
         Applying Lemma \ref{retractionToLowerGroups}, we see $u$ and $v$ are conjugate in $\langle s_i,s_{i-1}\rangle_{G_m}$. The claim then follows by fact that $\CL_{\BS(1,2)}(n)\simeq n$ \cite{sale2016conjugacy}. 
     \end{proof}
    \begin{prop}\label{GmUpperBound}
    We have $\CL_{G_m}(n)\preceq E(m-1,n)$.      
    \end{prop}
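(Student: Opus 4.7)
The plan is to prove the statement by induction on $m$. The base case $m=1$ is the equivalence $\CL_{G_1}(n) = \CL_{\BS(1,2)}(n) \simeq n = E(0,n)$ cited in Lemma~\ref{conjugateEntirelyInTwoConsecutiveGenerators}. For the inductive step, assume $\CL_{G_{m-1}}(n) \preceq E(m-2, n)$ and take conjugate elements $u, v \in G_m$ with $|u|+|v| \leq n$.

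First, I apply Lemma~\ref{conjugatingToRank} with $i = m-1$ to produce words $u_{m-1}, v_{m-1}$ on $\{s_0, \ldots, s_{m-1}\}$ of length at most $E(1,n) = 2^n$, conjugate to $u, v$ via conjugators in $G_m$ of length at most $E(0,n) = n \preceq E(m-1,n)$. By the triangle inequality it therefore suffices to bound the $G_m$-conjugator length between $u_{m-1}$ and $v_{m-1}$ by $\preceq E(m-1,n)$.

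Since $G_m = G_{m-1} *_\phi$ is the HNN extension with stable letter $s_m$ and associated subgroups $\langle s_{m-1}\rangle, \langle s_{m-1}^2\rangle$, Collins' Lemma splits the remaining problem into two cases. In case~(a), $u_{m-1}$ and $v_{m-1}$ are already conjugate in $G_{m-1}$, and the inductive hypothesis yields a conjugator of length $\preceq E(m-2, 2\cdot 2^n) \preceq E(m-2, 2^n) = E(m-1,n)$, using the key identity $E(a, E(b,n)) = E(a+b, n)$. In case~(b), each of $u_{m-1}, v_{m-1}$ is $G_{m-1}$-conjugate to a power of $s_{m-1}$, say $u_{m-1} \sim_{G_{m-1}} s_{m-1}^a$ and $v_{m-1} \sim_{G_{m-1}} s_{m-1}^b$. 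Because $s_{m-1}$ is undistorted in $G_{m-1}$ (as the top stable letter of its HNN structure, so that Britton normal form gives $|s_{m-1}^a|_{G_{m-1}} = |a|$), the minimum-norm representative of each conjugacy class is the respective power itself, forcing $|a|, |b| \leq 2^n$. The desired $G_m$-conjugator is then the concatenation of (i) a $G_{m-1}$-conjugator from $u_{m-1}$ to $s_{m-1}^a$ of length $\preceq E(m-1, n)$ via the inductive hypothesis, (ii) the power $s_m^k$ with $b = 2^k a$ and $|k| \leq \log_2 \max\{|a|,|b|\} \leq n$ conjugating $s_{m-1}^a$ to $s_{m-1}^b$ through the relation $s_m s_{m-1} s_m^{-1} = s_{m-1}^2$, and (iii) a $G_{m-1}$-conjugator from $s_{m-1}^b$ to $v_{m-1}$. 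The total has length $\preceq E(m-1, n)$.

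The main obstacle is case~(b): one must verify that the exponents $|a|, |b|$ -- and hence the bridge length $|k|$ -- are bounded tightly enough that the inductive hypothesis applied to the two internal $G_{m-1}$-conjugators does not escape the bound $E(m-1, n)$. The key input is the undistortedness of $s_{m-1}$ in $G_{m-1}$, which can be viewed as Lemma~\ref{powerOfGeneratorLength} applied to words on $\{s_0,\ldots, s_{m-1}\}$ (with zero $s_m$-letters), forcing $|a|, |b| \leq 2^n$ and ensuring the tower of exponentials lands precisely at $E(m-1, n)$ rather than higher.
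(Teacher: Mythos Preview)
Your inductive strategy is a natural alternative to the paper's direct diagrammatic argument, but there is a genuine gap: invoking Lemma~\ref{conjugatingToRank} with $i=m-1$ requires $\rk(u)\leq m-1$, and you never treat the case $\rk(u)=m$. When $u$ (hence $v$) is not conjugate into $G_{m-1}$, there are no words $u_{m-1},v_{m-1}$ to feed into your Collins--Lemma dichotomy. To repair this you would first cyclically Britton--reduce $u,v$ with respect to the stable letter $s_m$ (cheap, by Lemma~\ref{powerOfGeneratorLength} with $p=0$), obtaining words $u',v'$ that still carry $s_m$-letters; Collins' Lemma then gives, after a cyclic permutation, $v'=s_{m-1}^{k}\,u'\,s_{m-1}^{-k}$. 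The entire remaining content is to bound $|s_{m-1}^{k}|_{G_m}$, and this is not automatic: it is precisely what the paper's Case~1 accomplishes via its analysis of radial $s_r$-corridors and the ``side-corridors'' between two consecutive ones, eventually bounding the exponent through the relation $s_{r-1}^{T}s_{r-2}^{\ell}s_{r-1}^{-T}=s_{r-2}^{2^{T}\ell}$ and Lemma~\ref{logarithmicShrinking}. Your inductive hypothesis on $\CL_{G_{m-1}}$ does not help here, since the conjugation passes through the stable letter $s_m$.

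Two smaller points on the parts you do cover. In case~(b), undistortion of $s_{m-1}$ alone does not show that $s_{m-1}^{a}$ minimises length in its conjugacy class; the clean argument is that the $s_{m-1}$-exponent sum is a homomorphism $G_{m-1}\to\ZZ$, hence conjugation-invariant, so every representative of a conjugate of $s_{m-1}^{a}$ has length at least $|a|$. Structurally, the paper does not induct on $m$: it conjugates once all the way down to rank $r$ via Lemma~\ref{conjugatingToRank} and then splits according to whether the reduced annular diagram contains $s_{r+1}$-edges. Your case~(b) corresponds to the paper's Case~2, while the paper's Case~1 simultaneously handles your case~(a) and the rank-$m$ situation you omitted.
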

\begin{proof}
    Suppose $u$ and $v$ are words on the generators of $G_m$ such that $u$ and $v$ represent conjugate elements of $G_m$ and $|u|+|v|\leq n$ and $u$. Let $r=\rk(u)=\rk(v)$. By Lemma \ref{conjugatingToRank}, we can conjugate $u$ to $u_r$ and $v$ to $v_r$ using conjugators of length at most $(m-r)E(m-r-1,n)\leq mE(m-1,n)$. To compute $c(u_r,v_r)$, let $\Delta$ be a reduced annular diagram witnessing the conjugacy of $u_r$ and $v_r$, and let $k$ be the largest natural number such that $s_k$ appears in $\Delta$. By Lemma \ref{largestIndexInDiagram}, $k\leq r+1$, so we have two cases:
\newline 

Case 1. Suppose $r=k$. If $r\leq 1$ then our claim follows from $\CL_{\langle s_0,s_1\rangle_{G_m}}(n)\simeq\CL_{\BS(1,2)}(n)\simeq n$ \cite{sale2016conjugacy} and $\CL_{\langle s_0\rangle_{G_m}}(n)\simeq \CL_\ZZ(n)\simeq n$. For $r>1$, every $s_r$-letter in $u_r$ and $v_r$ is contained in either an $s_r$-arch or a radial $s_r$-corridor. Indeed there exists at least one radial $s_r$-corridor, since otherwise the $s_r$-arches may be excised to give $\rk(u_r)=\rk(v_r)\leq r-1$. If the subwords of $u$ and $v$ between each pair of consecutive radial corridors represent elements of $\langle s_{r-1}\rangle_{G_m}$, then our claim follows from Lemma \ref{conjugateEntirelyInTwoConsecutiveGenerators}, setting $i=r$. 
    
    If not, let $\Delta'$ be the contractible subdiagram of $\Delta$ contained between two consecutive radial $s_r$-corridors, where at least one of the corresponding subwords $w_1$ and $w_2$ of $u$ and $v$ (respectively) does not represent an element of $\langle s_{r-1}\rangle_{G_m}$ (see Figure \ref{BetweenRadialsrCorridors}). Since the two corridors bounding $\Delta'$ are consecutive, every $s_r$-edge in $w$ is part of an $s_r$-arch in $\Delta$, so for $j=1,2$ there is a set $\mathcal{P}_j$ of $r$-pinches in $w_j$ (\textit{not} cyclic conjugates of $w_j$) such that every $s_{r}$-letter is involved in or an endpoint of some $p\in \mathcal{P}_j$, and, for all $P_1,P_2\in \mathcal{P}_j$, one is involved in the other or they have no letters in common. The set of maximal elements of $\mathcal{P}_j$ have total length at most $|w_i|\leq E(m-r)$, so by Lemma \ref{powerOfGeneratorLength} we may replace $w_j$ with a word on $\{s_0,\ldots, s_{r-1}\}$ of length at most $E(m-r+1,n)$, as in the proof of Lemma \ref{conjugatingToRank}. Since $\Delta'$ is reduced and $s_{r-1}$ has infinite order in $G_m$, Lemma \ref{reducedAlongCorrs} implies that  $\Delta'$ contains no contractible $s_r$-rings. Also, the boundary of $\Delta'$ has no $s_r$-letters, so this implies $\Delta'$ has no $s_r$-edges, meaning every $s_{r-1}$-edge is part of an $s_{r-1}$-corridor. Call the $s_{r-1}$-corridors in $\Delta'$ without an endpoint in either $w_1$ or $w_2$ \textit{side-corridors}. If there are no side-corridors, the words along the two radial $s_r$-corridors have length at most $2E(m-r+1,n)$, and our proposition holds. Otherwise we have the situation depicted in Figure \ref{BetweenRadialsrCorridors}, where all unlabeled corridors are $s_{r-1}$-corridors and $|e_j|+|e_j'|$ is the number of $s_{r-1}$-corridors with precisely one endpoint in $w_j$. For the words $w_j'$ along the top and bottom side-corridors, we have $$w_j'=s_{r-1}^{e_j}w_js_{r-1}^{e_j'},$$ where $|e_j|+|e_j'|\leq |w_j|\leq  E(m-r+1,n)$. Moreover, since once of $w_1,w_2$ does not represent an element of $\langle s_{r-1}\rangle_{G_m}$, and both are words along the sides of $s_{r-1}$-corridors containing no $s_r$-edges, at least one of $w_1'$ and $w_2'$ must be a non-trivial power of $s_{r-2}$.

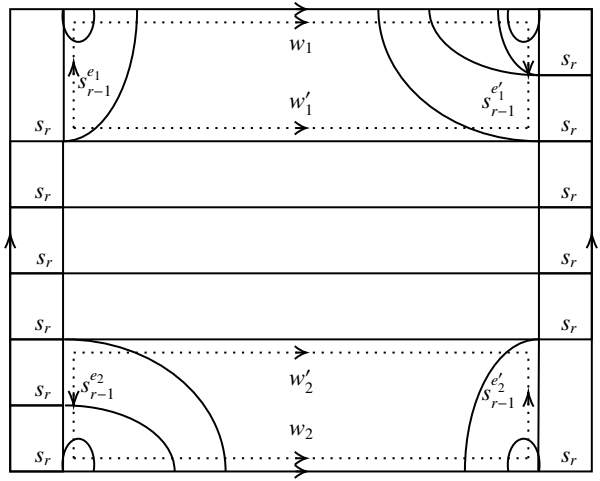
\begin{figure}

\tikzset{every picture/.style={line width=0.75pt}} 

\begin{tikzpicture}[x=0.5pt,y=0.5pt,yscale=-1,xscale=.8]

\draw   (50,50) -- (600,50) -- (600,400) -- (50,400) -- cycle ;
\draw    (50,230) -- (50,222) ;
\draw [shift={(50,220)}, rotate = 90] [color={rgb, 255:red, 0; green, 0; blue, 0 }  ][line width=0.75]    (10.93,-4.9) .. controls (6.95,-2.3) and (3.31,-0.67) .. (0,0) .. controls (3.31,0.67) and (6.95,2.3) .. (10.93,4.9)   ;
\draw    (600,230) -- (600,222) ;
\draw [shift={(600,220)}, rotate = 90] [color={rgb, 255:red, 0; green, 0; blue, 0 }  ][line width=0.75]    (10.93,-4.9) .. controls (6.95,-2.3) and (3.31,-0.67) .. (0,0) .. controls (3.31,0.67) and (6.95,2.3) .. (10.93,4.9)   ;
\draw   (50,350) -- (100,350) -- (100,400) -- (50,400) -- cycle ;
\draw   (50,300) -- (100,300) -- (100,350) -- (50,350) -- cycle ;
\draw   (50,250) -- (100,250) -- (100,300) -- (50,300) -- cycle ;
\draw   (50,200) -- (100,200) -- (100,250) -- (50,250) -- cycle ;
\draw   (50,150) -- (100,150) -- (100,200) -- (50,200) -- cycle ;
\draw   (550,250) -- (600,250) -- (600,300) -- (550,300) -- cycle ;
\draw   (550,200) -- (600,200) -- (600,250) -- (550,250) -- cycle ;
\draw   (550,150) -- (600,150) -- (600,200) -- (550,200) -- cycle ;
\draw   (550,100) -- (600,100) -- (600,150) -- (550,150) -- cycle ;
\draw   (550,50) -- (600,50) -- (600,100) -- (550,100) -- cycle ;
\draw    (320,400) -- (328,400) ;
\draw [shift={(330,400)}, rotate = 180] [color={rgb, 255:red, 0; green, 0; blue, 0 }  ][line width=0.75]    (10.93,-4.9) .. controls (6.95,-2.3) and (3.31,-0.67) .. (0,0) .. controls (3.31,0.67) and (6.95,2.3) .. (10.93,4.9)   ;
\draw    (320,50) -- (328,50) ;
\draw [shift={(330,50)}, rotate = 180] [color={rgb, 255:red, 0; green, 0; blue, 0 }  ][line width=0.75]    (10.93,-4.9) .. controls (6.95,-2.3) and (3.31,-0.67) .. (0,0) .. controls (3.31,0.67) and (6.95,2.3) .. (10.93,4.9)   ;
\draw    (100,250) -- (550,250) ;
\draw    (100,300) -- (550,300) ;
\draw    (100,200) -- (550,200) ;
\draw    (100,150) -- (550,150) ;
\draw  [draw opacity=0] (480,400) .. controls (480,400) and (480,400) .. (480,400) .. controls (480,344.77) and (511.05,300) .. (549.35,300) -- (549.35,400) -- cycle ; \draw   (480,400) .. controls (480,400) and (480,400) .. (480,400) .. controls (480,344.77) and (511.05,300) .. (549.35,300) ;  
\draw  [draw opacity=0] (521.1,400) .. controls (520.74,398.33) and (520.55,396.56) .. (520.55,394.74) .. controls (520.55,383.97) and (527.27,375.24) .. (535.55,375.24) .. controls (543.84,375.24) and (550.55,383.97) .. (550.55,394.74) .. controls (550.55,396.56) and (550.36,398.33) .. (550,400) -- (535.55,394.74) -- cycle ; \draw   (521.1,400) .. controls (520.74,398.33) and (520.55,396.56) .. (520.55,394.74) .. controls (520.55,383.97) and (527.27,375.24) .. (535.55,375.24) .. controls (543.84,375.24) and (550.55,383.97) .. (550.55,394.74) .. controls (550.55,396.56) and (550.36,398.33) .. (550,400) ;  

\draw  [draw opacity=0] (550,100) .. controls (550,100) and (550,100) .. (550,100) .. controls (528.63,100) and (511.31,77.59) .. (511.31,49.94) -- (550,49.94) -- cycle ; \draw   (550,100) .. controls (550,100) and (550,100) .. (550,100) .. controls (528.63,100) and (511.31,77.59) .. (511.31,49.94) ;  
\draw  [draw opacity=0] (550,50) .. controls (550.36,51.67) and (550.55,53.44) .. (550.55,55.26) .. controls (550.55,66.03) and (543.84,74.76) .. (535.55,74.76) .. controls (527.27,74.76) and (520.55,66.03) .. (520.55,55.26) .. controls (520.55,53.44) and (520.74,51.67) .. (521.1,50) -- (535.55,55.26) -- cycle ; \draw   (550,50) .. controls (550.36,51.67) and (550.55,53.44) .. (550.55,55.26) .. controls (550.55,66.03) and (543.84,74.76) .. (535.55,74.76) .. controls (527.27,74.76) and (520.55,66.03) .. (520.55,55.26) .. controls (520.55,53.44) and (520.74,51.67) .. (521.1,50) ;  
\draw  [draw opacity=0] (100,400) .. controls (99.64,398.33) and (99.45,396.56) .. (99.45,394.74) .. controls (99.45,383.97) and (106.16,375.24) .. (114.45,375.24) .. controls (122.73,375.24) and (129.45,383.97) .. (129.45,394.74) .. controls (129.45,396.56) and (129.26,398.33) .. (128.9,400) -- (114.45,394.74) -- cycle ; \draw   (100,400) .. controls (99.64,398.33) and (99.45,396.56) .. (99.45,394.74) .. controls (99.45,383.97) and (106.16,375.24) .. (114.45,375.24) .. controls (122.73,375.24) and (129.45,383.97) .. (129.45,394.74) .. controls (129.45,396.56) and (129.26,398.33) .. (128.9,400) ;  
\draw  [draw opacity=0] (550,100) .. controls (550,100) and (550,100) .. (550,100) .. controls (492.73,100) and (446.31,77.61) .. (446.31,50) -- (550,50) -- cycle ; \draw   (550,100) .. controls (550,100) and (550,100) .. (550,100) .. controls (492.73,100) and (446.31,77.61) .. (446.31,50) ;  
\draw  [draw opacity=0] (550,150) .. controls (550,150) and (550,150) .. (550,150) .. controls (550,150) and (550,150) .. (550,150) .. controls (466.14,150) and (398.15,105.23) .. (398.15,50) -- (550,50) -- cycle ; \draw   (550,150) .. controls (550,150) and (550,150) .. (550,150) .. controls (550,150) and (550,150) .. (550,150) .. controls (466.14,150) and (398.15,105.23) .. (398.15,50) ;  
\draw  [draw opacity=0] (101.85,350) .. controls (159.11,350) and (205.54,372.39) .. (205.54,400) -- (101.85,400) -- cycle ; \draw   (101.85,350) .. controls (159.11,350) and (205.54,372.39) .. (205.54,400) ;  
\draw  [draw opacity=0] (101.85,300) .. controls (101.85,300) and (101.85,300) .. (101.85,300) .. controls (101.85,300) and (101.85,300) .. (101.85,300) .. controls (185.71,300) and (253.69,344.77) .. (253.69,400) -- (101.85,400) -- cycle ; \draw   (101.85,300) .. controls (101.85,300) and (101.85,300) .. (101.85,300) .. controls (101.85,300) and (101.85,300) .. (101.85,300) .. controls (185.71,300) and (253.69,344.77) .. (253.69,400) ;  

\draw    (100.65,150) -- (100.65,50) ;
\draw    (550,300) -- (549.35,400) ;
\draw  [draw opacity=0] (170,50) .. controls (170,105.23) and (138.95,150) .. (100.65,150) .. controls (100.65,150) and (100.65,150) .. (100.65,150) -- (100.65,50) -- cycle ; \draw   (170,50) .. controls (170,105.23) and (138.95,150) .. (100.65,150) .. controls (100.65,150) and (100.65,150) .. (100.65,150) ;  
\draw  [draw opacity=0] (128.9,50) .. controls (129.26,51.67) and (129.45,53.44) .. (129.45,55.26) .. controls (129.45,66.03) and (122.73,74.76) .. (114.45,74.76) .. controls (106.16,74.76) and (99.45,66.03) .. (99.45,55.26) .. controls (99.45,53.44) and (99.64,51.67) .. (100,50) -- (114.45,55.26) -- cycle ; \draw   (128.9,50) .. controls (129.26,51.67) and (129.45,53.44) .. (129.45,55.26) .. controls (129.45,66.03) and (122.73,74.76) .. (114.45,74.76) .. controls (106.16,74.76) and (99.45,66.03) .. (99.45,55.26) .. controls (99.45,53.44) and (99.64,51.67) .. (100,50) ;  

\draw  [dash pattern={on 0.84pt off 2.51pt}]  (110,310) -- (540,310) ;
\draw  [dash pattern={on 0.84pt off 2.51pt}]  (110,140) -- (540,140) ;
\draw    (320,140) -- (328,140) ;
\draw [shift={(330,140)}, rotate = 180] [color={rgb, 255:red, 0; green, 0; blue, 0 }  ][line width=0.75]    (10.93,-4.9) .. controls (6.95,-2.3) and (3.31,-0.67) .. (0,0) .. controls (3.31,0.67) and (6.95,2.3) .. (10.93,4.9)   ;
\draw    (320,310) -- (328,310) ;
\draw [shift={(330,310)}, rotate = 180] [color={rgb, 255:red, 0; green, 0; blue, 0 }  ][line width=0.75]    (10.93,-4.9) .. controls (6.95,-2.3) and (3.31,-0.67) .. (0,0) .. controls (3.31,0.67) and (6.95,2.3) .. (10.93,4.9)   ;
\draw  [dash pattern={on 0.84pt off 2.51pt}]  (110,140) -- (110,60) ;
\draw  [dash pattern={on 0.84pt off 2.51pt}]  (110,60) -- (540,60) ;
\draw    (320,60) -- (328,60) ;
\draw [shift={(330,60)}, rotate = 180] [color={rgb, 255:red, 0; green, 0; blue, 0 }  ][line width=0.75]    (10.93,-4.9) .. controls (6.95,-2.3) and (3.31,-0.67) .. (0,0) .. controls (3.31,0.67) and (6.95,2.3) .. (10.93,4.9)   ;
\draw    (110,100) -- (110,92) ;
\draw [shift={(110,90)}, rotate = 90] [color={rgb, 255:red, 0; green, 0; blue, 0 }  ][line width=0.75]    (10.93,-4.9) .. controls (6.95,-2.3) and (3.31,-0.67) .. (0,0) .. controls (3.31,0.67) and (6.95,2.3) .. (10.93,4.9)   ;
\draw  [dash pattern={on 0.84pt off 2.51pt}]  (540,140) -- (540,60) ;
\draw    (540,90) -- (540,98) ;
\draw [shift={(540,100)}, rotate = 270] [color={rgb, 255:red, 0; green, 0; blue, 0 }  ][line width=0.75]    (10.93,-4.9) .. controls (6.95,-2.3) and (3.31,-0.67) .. (0,0) .. controls (3.31,0.67) and (6.95,2.3) .. (10.93,4.9)   ;
\draw  [dash pattern={on 0.84pt off 2.51pt}]  (110,390) -- (110,310) ;
\draw    (110,340) -- (110,348) ;
\draw [shift={(110,350)}, rotate = 270] [color={rgb, 255:red, 0; green, 0; blue, 0 }  ][line width=0.75]    (10.93,-4.9) .. controls (6.95,-2.3) and (3.31,-0.67) .. (0,0) .. controls (3.31,0.67) and (6.95,2.3) .. (10.93,4.9)   ;
\draw  [dash pattern={on 0.84pt off 2.51pt}]  (540,390) -- (540,310) ;
\draw    (540,350) -- (540,342) ;
\draw [shift={(540,340)}, rotate = 90] [color={rgb, 255:red, 0; green, 0; blue, 0 }  ][line width=0.75]    (10.93,-4.9) .. controls (6.95,-2.3) and (3.31,-0.67) .. (0,0) .. controls (3.31,0.67) and (6.95,2.3) .. (10.93,4.9)   ;
\draw  [dash pattern={on 0.84pt off 2.51pt}]  (110,390) -- (540,390) ;
\draw    (320,390) -- (328,390) ;
\draw [shift={(330,390)}, rotate = 180] [color={rgb, 255:red, 0; green, 0; blue, 0 }  ][line width=0.75]    (10.93,-4.9) .. controls (6.95,-2.3) and (3.31,-0.67) .. (0,0) .. controls (3.31,0.67) and (6.95,2.3) .. (10.93,4.9)   ;

\draw (69.3,132.4) node [anchor=north west][inner sep=0.75pt] [font=\small]   {$s_{r}$};
\draw (69.3,182.4) node [anchor=north west][inner sep=0.75pt] [font=\small]   {$s_{r}$};
\draw (71,232.4) node [anchor=north west][inner sep=0.75pt] [font=\small]   {$s_{r}$};
\draw (69.3,282.4) node [anchor=north west][inner sep=0.75pt] [font=\small]   {$s_{r}$};
\draw (69.3,332.4) node [anchor=north west][inner sep=0.75pt] [font=\small]   {$s_{r}$};
\draw (69.3,382.4) node [anchor=north west][inner sep=0.75pt] [font=\small]   {$s_{r}$};
\draw (567,82.4) node [anchor=north west][inner sep=0.75pt] [font=\small]   {$s_{r}$};
\draw (567,132.4) node [anchor=north west][inner sep=0.75pt] [font=\small]   {$s_{r}$};
\draw (567,182.4) node [anchor=north west][inner sep=0.75pt] [font=\small]   {$s_{r}$};
\draw (567,232.4) node [anchor=north west][inner sep=0.75pt] [font=\small]   {$s_{r}$};
\draw (567,282.4) node [anchor=north west][inner sep=0.75pt] [font=\small]   {$s_{r}$};
\draw (567,382.4) node [anchor=north west][inner sep=0.75pt] [font=\small]   {$s_{r}$};
\draw (311,70.4) node [anchor=north west][inner sep=0.75pt] [font=\small]   {$w_{1}$};
\draw (311,362.4) node [anchor=north west][inner sep=0.75pt] [font=\small]   {$w_{2}$};
\draw (311,110.4) node [anchor=north west][inner sep=0.75pt] [font=\small]   {$w_{1} '$};
\draw (311,320.4) node [anchor=north west][inner sep=0.75pt] [font=\small]   {$w_{2} '$};
\draw (110,93.4) node [anchor=north west][inner sep=0.75pt] [font=\small]   {$s_{r-1}^{e_{1}}$};
\draw (492,102.4) node [anchor=north west][inner sep=0.75pt] [font=\small]   {$s_{r-1}^{e_{1} '}$};
\draw (492,323.4) node [anchor=north west][inner sep=0.75pt] [font=\small]   {$s_{r-1}^{e_{2} '}$};
\draw (113,323.4) node [anchor=north west][inner sep=0.75pt] [font=\small]   {$s_{r-1}^{e_{2}}$};

\end{tikzpicture}

    \caption{Region between two radial $s_r$-corridors where not all side-corridors go to $w_1$ or $w_2$}
    \label{BetweenRadialsrCorridors}
\end{figure}

    Let $T$ be total number of side-corridors, let $E_j$ be the number of $s_{r-1}$-letters in $w_j$, and let $w_j'=s_{r-2}^{\ell_j}$. By Lemma \ref{powerOfGeneratorLength}, $|\ell_j|\leq |w_j|2^{|e_j|+|e_j'|+|E_j|}$. In turn, we have $$|w_j|2^{|e_j|+|e_j'|+|E_j|}\leq 2E(m-r+1,n)2^{4E(m-r+1,n)}\leq E(m-r+2,n)^5.$$ By the inspection of Figure \ref{BetweenRadialsrCorridors}, $s_{r-1}^Tw_1's_{r-1}^{-T}=w_2'$, so $\ell_12^T=\ell_2$, which implies $|T|\leq \max\{\ell_1,\ell_2\}$. By Lemma \ref{logarithmicShrinking} $|s_{r-1}^T|_{G_m}\leq C\log_2(|T|)\leq C\log_2(\max\{\ell_1,\ell_2\})$. This is in turn bounded above by  $$C\log_2( 5E(m-r+2,n))\leq C\log_2(5E(m,n)).$$ After cyclically conjugating $u$ and $v$ if necessary at a cost of at most $|u_r|+|v_r|\leq mE(m-1,n)$, we have $s_{r-1}^{-e_2+T+e_1}u_rs_{r-1}^{e_2-T-e_1}=v_r$, whence our claim follows by the bounds on $|e_1|+|e_2|$ and $|s_{r-1}^T|_{G_m}$.

    Case 2. Suppose $k=r+1$. Then there must exist some non-contractible $s_k$-ring in $\Delta$. Let $Q_1$ be the $s_k$-ring closest to $u_r$ and $Q_2$ the $s_k$-ring closest to $v_r$, and let $w_1,w_2$ be the words along the sides of $Q_1,Q_2$ closest to $u_r$ and $v_r$ respectively. By Lemma \ref{reducedAlongCorrs}, $w_1$ and $w_2$ are reduced words on $\{s_r\}$. If $r=0$, then $w_1=u_0,w_2=v_0$ since there are no 2-cells between  $Q_1$ and $u_r$ or between $Q_2$ and $v_r$. If $r\geq 1$, then $|w_1|\leq |u_r|$ and $|w_2|\leq |v_r|$ because (for $j=1,2$) $w_j$ being reduced implies every $s_r$-letter in $w_j$ is part of an $s_r$-corridor running between the $Q_j$'s and a boundary component of $\Delta$. After cyclically conjugating $u_r$ and $v_r$ so that their first letters are part of such an $s_r$-corridor, we have that $u_r$ and $v_r$ are conjugate to $w_1$ and $w_2$ respectively by a power of $s_{r-1}$, hence $u_r,v_r\in \langle s_r,s_{r-1}\rangle_{G_m}$. Lemma \ref{conjugateEntirelyInTwoConsecutiveGenerators} thus implies $c(u_r,w_1),c(v_r,w_2)\leq \lambda E(m-1,n)$. Also, the cyclic conjugations can be done at cost at most $|u_r|+|v_r|\leq 2E(m-1,n)$.
    
    It remains to give an upper bound for $c(w_1,w_2)$. All $s_k$-corridors in $\Delta$ are reduced, so in the annular subdiagram $\Delta'$ contained between two consecutive non-contractible $s_k$-rings, there must only be radial $s_{r}$-corridors, hence the words along the boundary components of $\Delta'$ must be the same. This implies that $\Delta'$ can be excised and the two boundary components glued together, as discussed in Lemma \ref{exciseTwoCorridors}. After doing so, if two consecutive $s_k$-corridors have opposite orientations, the words along the sides not facing each other will be the same, so these corridors may similarly be excised. Hence the only 2-cells between $Q_1$ and $Q_2$ are those belonging to additional non-contractible $s_k$-rings, which share the same orientation and only have edges labeled $s_r$ and $s_k=s_{r+1}$, so $w_1$ and $w_2$ are conjugate via a word on $\{s_r,s_{k}\}$. In particular, perhaps after switching $w_1$ and $w_2$, the standard normal form for $\BS(1,2)$ implies that they are conjugate via $s_r^Us_{k}^Ts_r^{U'}$ where $T$ is the number of $s_k$-corridors (with the sign of $T$ reflecting their orientation) and $U,U'$ are some integers. But, $s_r^{U'}$ commutes with $w_1$ and $s_r^{-U}$ commutes with $w_2$ , so $w_1$ and $w_2$ are also conjugate via $s_k^T$. Since $w_i=s_{r}^{\pm|w_i|}$ and $0<|w_i|\leq E(m-r,n)$, the equality $s_k^{T}w_1s_k^{-T}=w_2$ implies $\pm|w_1|2^T=\pm|w_2|$, which in turn implies $|s_{r+1}^T|_{G_m}\leq C\log_2(\max\{|w_1|,|w_2|\})$, whence the claim follows.

    \end{proof}

\section{Lower Bound for $G_m$}
\begin{lemma}\label{numberOfTerms}
    If $|s_i^k|_{G_m}\leq n$ then there exists some $n_0,n_1,\ldots,n_p,m_1,\ldots,m_p\in \ZZ$ such that $k=n_0+\sum_{j=0}^p2^{m_j}n_j$ and $\sum_{j=0}^p|n_j|\leq n$. 
    
\end{lemma}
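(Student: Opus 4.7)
The plan is to proceed by strong induction on $n$, extracting from $w$ a nested pinch structure (via Lemma \ref{higherGeneratorInPinchVanKampen}) whose $s_i$-letters will each contribute a signed power of $2$ to $k$. The base case $n=0$ forces $k=0$ and is handled trivially. For the inductive step, take a word $w$ of length at most $n$ representing $s_i^k$; by Corollary \ref{noLowerGeneratorsInMinimumLengthWord} we may assume $w$ is a word on $\{s_i,\ldots,s_m\}$. If no letter $s_j$ with $j>i$ appears in $w$, then $|k|$ equals the signed $s_i$-exponent sum of $w$, so $|k|\leq n$ and we may take $n_0=k$ with $p=0$.

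Otherwise, let $J>i$ be the largest index appearing in $w$. Since $s_i^k\in\langle s_0,\ldots,s_{J-1}\rangle$, Lemma \ref{higherGeneratorInPinchVanKampen} furnishes a family of $J$-pinches in $w$ covering every $s_J$-letter, with any two such pinches nested or disjoint. First I would pick an innermost $J$-pinch $P=s_J^{\delta}w_0 s_J^{-\delta}$: then $w_0$ contains no $s_J$-letter, lies on $\{s_i,\ldots,s_{J-1}\}$, has length at most $n-2$, and represents $s_{J-1}^{\ell}$ in $G_m$ for some integer $\ell$ (even when $\delta=-1$). In $G_m$ the pinch then equals $s_{J-1}^{2^{\delta}\ell}$, and applying the inductive hypothesis to $w_0$ yields a sparse sum representation of $\ell$.

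The key step is to iterate this pinch-collapse from the highest level $J$ down to level $i+1$: each original $s_i$-letter $\ell_r$ of $w$ ends up contributing a term $\epsilon_r\cdot 2^{M_r}$ to $k$, where $\epsilon_r=\pm 1$ is its sign and $M_r\in\mathbb{Z}$ is the additive accumulation of the $\delta$-values of the nested pinches enclosing $\ell_r$ at each level between $i+1$ and $J$. Summing over all $s_i$-letters of $w$ (of which there are at most $n$) and grouping terms with common exponent yields $k=n_0+\sum_{j=1}^p 2^{m_j}n_j$ with $\sum_{j=0}^p|n_j|\leq n$.

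The main obstacle is the bookkeeping to combine the inductively obtained sparse decompositions at different pinch levels into one coherent decomposition of $k$. Pinches at level $J$ scale $s_{J-1}$-content by $2^{\delta}$, and those scalings have to propagate cleanly through all intermediate levels down to the $s_i$-letters; the $\delta=-1$ case, which forces the enclosed power to be even, must be checked not to obstruct the additive structure of the exponents $M_r$. However, the hierarchical nesting of pinches guaranteed by Lemma \ref{higherGeneratorInPinchVanKampen} allows the contributions to compose additively at each level, giving the stated bound.
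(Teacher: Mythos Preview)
Your proposal has the right instinct---tracking $s_i$-letters through a hierarchy of pinches---but the argument as written has two genuine gaps.

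First, the induction on $n$ does not close. You apply the inductive hypothesis to $w_0$, the content of an innermost $J$-pinch, but $w_0$ represents $s_{J-1}^{\ell}$, so what you obtain is a sparse representation of $\ell$, not of $k$. You never explain how these sparse representations of various $\ell$'s (at level $J-1$) assemble into a sparse representation of $k$ (at level $i$); this is exactly the ``bookkeeping obstacle'' you flag, and it is not resolved. Moreover, replacing a pinch $s_J^{\delta}w_0s_J^{-\delta}$ by the literal word $s_{J-1}^{2^{\delta}\ell}$ can make the ambient word longer, so you cannot re-invoke length-induction on the resulting word either.

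Second, your formula for $M_r$ is not correct. Consider $w=(s_{i+2}s_{i+1}^{\ell}s_{i+2}^{-1})\, s_i\, (s_{i+2}s_{i+1}^{-\ell}s_{i+2}^{-1})$, which equals $s_{i+1}^{2\ell}s_is_{i+1}^{-2\ell}=s_i^{2^{2\ell}}$. Here the single $s_i$-letter is enclosed by no $(i{+}2)$-pinch at all, and the exponent $M_r=2\ell$ depends on the \emph{content} of the adjacent $(i{+}2)$-pinches, not on any sum of $\pm1$'s. So ``additive accumulation of the $\delta$-values'' is the wrong quantity. What is true---and would suffice---is that after collapsing all $J$-pinches (for the top index $J>i+1$) one obtains a word on $\{s_i,\dots,s_{J-1}\}$ whose $s_i$-letters are exactly those of $w$ lying \emph{outside} every maximal $J$-pinch; inducting on $J-i$ (not on $n$) then gives $k=\sum_{r\in S}\epsilon_r 2^{M_r}$ over a \emph{subset} $S$ of the original $s_i$-letters, with the $J=i+1$ base case handled by the usual $\BS(1,2)$ computation. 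You neither state nor prove this.

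The paper avoids all of this multi-level bookkeeping: it writes $w=s_i^{n_0}w_1s_i^{n_1}\cdots w_ps_i^{n_p}$ with the $w_j$ maximal subwords lying in $\langle s_{i+1},\dots,s_m\rangle$, and then proves the single non-obvious fact that each $w_j$ actually lies in $\langle s_{i+1}\rangle$ (using Lemma~\ref{higherGeneratorInPinchVanKampen} together with maximality of the $w_j$). Once that is established, a one-line $\BS(1,2)$ computation gives $k=n_0+\sum_j 2^{m_j}n_j$ with $m_j=\sum_{\alpha\le j}\ell_\alpha$, and $\sum|n_j|\le|w|$ is immediate. Your route can be repaired along the lines above, but as written it does not reach the conclusion.
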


\begin{proof}
    
    Let $w$ be a minimum length word representing $s_i^k$. By Corollary \ref{noLowerGeneratorsInMinimumLengthWord}, we may assume it is a word on $\{s_i,\ldots,s_m\}$. Let $\{w_1,\ldots,w_p\}$ be a set of disjoint subwords of $w$ satisfying:
    \begin{enumerate}[label=(\alph*)]\label{setOfSubwords}
        \item\label{property1} $w_j\in \langle s_{i+1},\ldots,s_m\rangle_{G_m}$ for all $j$;
        \item\label{property2} no $w_j$ is properly contained in a subword $w'$ of $w$ with $w'\in \langle s_{i+1},\ldots,s_m\rangle_{G_m}$;
        \item\label{property3}for all $j'>i$, every $s_{j'}$-letter of $w$ is contained in some $w_j$.
    \end{enumerate}
    Perhaps after re-indexing, by \ref{property3} we may write $w=s_i^{n_0}w_1s_i^{n_1}w_2\cdots w_ps_i^{n_p}$ for some $n_j\in \ZZ$ such that \begin{equation}\label{lengthOfW}
        n_0+\sum_{j=1}^p(|w_j|+|n_j|)=|s_i^k|_{G_m}.
    \end{equation} Every $w_j$ is equal in $G_m$ to some word $w_j'$ with no $q$-pinches for $q>i$. The word $w'=s_i^{n_0}w_1's_i^{n_1}w_2'\cdots w_p's_i^{n_p}$ equals $w$ in $G_m$ and satisfies items \ref{property1},\ref{property2}, and \ref{property3} listed above. We claim that $w_j'\in \langle s_{i+1}\rangle_{G_m}$, or equivalently, $w'$ contains no $s_{q'}$-letters for $q'>i+1$. Let $q_0$ be the largest index of a letter appearing in $w'$. If $q_0>i+1$, then by  Lemma \ref{higherGeneratorInPinchVanKampen} every $s_{q_0}$-letter must be the endpoint of an $q_0$-pinch of $w'$. Without loss of generality, suppose some $q_0$-pinch has one endpoint in $w_1'$ and the other in $w_2'$, so $w_1'=u_1s_{q_0}^{\pm1}v_1$ and $w_2'=u_2s_{q_0}^{\mp1}v_2$ with $s_{q_0}^{\pm1}v_1s_i^{n_1}u_2s_{q_0}^{\mp1}$ a $q_0$-pinch. Then, $w_1's_i^{n_1}w_2'=u_1s_{q_0-1}^{\ell}v_2$ for some $\ell\in \ZZ$. This is a word on $\{s_{i+1},\ldots,s_m\}$, hence $w_1's_i^{n_1}w_2'\in\langle s_{i+1},\ldots,s_m\rangle_{G_m}$, contradicting item \ref{property2}. By item \ref{property1} we thus have $w_j'=s_{i+1}^{\ell_j}$ for some $\ell_j\in \ZZ$. 
    
    Now, $\langle s_i,s_{i+1}\rangle_{G_m}$ is isomorphic to $\BS(1,2)$, wherein an straightforward computation shows that if $$s_i^{n_0}s_{i+1}^{\ell_1}s_i^{n_1}s_{i+1}^{\ell_2}\cdots s_{i+1}^{\ell_p}s_i^{n_p}=s_i^k,$$ then $$k=n_0+\sum_{j=1}^p2^{m_j}n_j,$$ where $$m_j=\sum_{\alpha=1}^{j}\ell_{\alpha}.$$ Our claim follows by equation (\ref{lengthOfW}).

    \end{proof}

    \begin{lemma}\label{convertTo MAW BinarySums}
        Let $k=n_0+\sum_{j=0}^p2^{m_j}n_j$, where $n_j\neq 0$ for all $j=0,\ldots, p$. Then $k$ can be expressed as $$k=\varepsilon_0+\sum_{j=0}^{p'}2^{m_j'}\varepsilon_j$$ where $\varepsilon_j\in \{-1,1\}$ for all $j$ and $p'\leq 2\sum_{j=0}^p|n_j|$.
    \end{lemma}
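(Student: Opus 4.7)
The plan is to ``atomize'' each integer coefficient $n_j$ in the hypothesis into $|n_j|$ signed copies of $\pm 1$, so that each single term $2^{m_j}n_j$ is replaced by $|n_j|$ terms of the form $\pm 2^{m_j}$. I note that in Lemma~\ref{numberOfTerms} the sum is actually $\sum_{j=1}^p 2^{m_j}n_j$ (the index $m_0$ is never introduced), so I read the hypothesis as $k = n_0 + \sum_{j=1}^p 2^{m_j}n_j$ throughout.

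First, for each $j \in \{1, \ldots, p\}$ I would write $n_j = \mathrm{sign}(n_j)\cdot |n_j|$ and expand
\[
2^{m_j}n_j \;=\; \sum_{i=1}^{|n_j|} \mathrm{sign}(n_j) \cdot 2^{m_j}.
\]
Aggregating these identities over $j$ and relabeling converts $\sum_{j=1}^p 2^{m_j}n_j$ into a sum of exactly $\sum_{j=1}^p |n_j|$ terms of the form $\varepsilon \cdot 2^{m'}$ with $\varepsilon \in \{-1,1\}$, as required.

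To absorb the constant $n_0$, I would use the hypothesis $n_0 \neq 0$ to set $\varepsilon_0 := \mathrm{sign}(n_0)$ and then express the remaining $|n_0|-1$ units as copies of $\mathrm{sign}(n_0)\cdot 2^0$. Concatenating the two parts gives an expression $k = \varepsilon_0 + \sum_{j=0}^{p'} 2^{m_j'}\varepsilon_j$ of the required shape, with
\[
p' + 1 \;=\; (|n_0|-1) + \sum_{j=1}^p |n_j| \;\leq\; \sum_{j=0}^p |n_j|,
\]
well within the claimed bound $p' \leq 2\sum_{j=0}^p |n_j|$.

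Since the construction is entirely explicit and consists only of rewriting each term, I anticipate no real obstacle; the only point requiring care is the bookkeeping of the count so that the bound on $p'$ is verified. The generous factor of $2$ in the announced bound leaves considerable slack beyond what this direct expansion delivers, suggesting that it is stated loosely for convenience in the downstream application to the lower bound for $\CL_{G_m}$.
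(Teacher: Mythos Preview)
Your argument is correct. The direct ``unary'' expansion you describe --- replacing $2^{m_j}n_j$ by $|n_j|$ copies of $\pm 2^{m_j}$ --- immediately yields the desired form with the stronger bound $p' \leq \sum_{j=0}^p |n_j|$, well inside the stated $2\sum_{j=0}^p |n_j|$.

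The paper takes a somewhat different route: it iteratively picks the leftmost coefficient $n_\lambda \notin \{-1,1\}$ and rewrites it as $n_\lambda = \pm(\varepsilon_\lambda + 2^k n_\lambda')$ with $|\varepsilon_\lambda| + |n_\lambda'| < |n_\lambda|$, replacing one term by (at most) two. This is a binary-style decomposition in the spirit of the power-circuit representations of \cite{miasnikov2012power}, and in principle it can produce far fewer terms than your unary expansion (roughly $\log|n_\lambda|$ instead of $|n_\lambda|$). However, the paper's bookkeeping only records that the total $\sum |n_j|$ is nonincreasing and that each step adds at most one term, so it ends up with the bound $p' \leq p + \sum |n_j| \leq 2\sum |n_j|$ --- no better than, and in fact slightly weaker than, what your direct expansion gives. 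Since the downstream application (Corollary~\ref{lengthOfCompactPower}) only needs the factor-of-two bound, either approach suffices; yours is the more transparent one for the lemma as stated.
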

    \begin{proof}
        Let $\lambda$ be the smallest index such that $n_{\lambda}\not\in \{-1,1\}$. Then, $n_{\lambda}=\pm(\varepsilon_{\lambda}+2^kn_{\lambda}')$ for some $\varepsilon_{\lambda}\in\{-1,0,1\}$ and $n_{\lambda}'$ such that $|\varepsilon_{\lambda}|+|n_{\lambda}'|<|n_{\lambda}|$. Replace $n_{\lambda}$ with $2^{m_{\lambda}}\varepsilon_{\lambda}+2^{m_{\lambda}+k}n'_{\lambda}$ in our sum. Since $$|\varepsilon_{\lambda}|+|n_{\lambda}'|+\sum_{j=0}^{\lambda-1}|n_j|+\sum_{j=\lambda+1}^{p}|n_j|\leq \sum_{j=0}^{p}|n_j|,$$ we may repeat this at most $\sum_{j=0}^{p}|n_j|$-times to obtain a sum $k=\varepsilon_0+\sum_{j=0}^{p'}2^{m_j'}\varepsilon_j$ where $\varepsilon_j\in \{-1,1\}$ for all $j$. Each iteration adds at most one term, so $p'\leq p+\sum_{j=0}^p|n_j|\leq 2\sum_{j=0}^p|n_j|$.
    \end{proof}
    \begin{cor}\label{lengthOfCompactPower}
        Let $k=\sum_{j=0}^{p}2^{2j+\alpha}$ for some $\alpha\in \ZZ$. Then $|s_i^k|_{G_m}\geq (p-1)/2$.
    \end{cor}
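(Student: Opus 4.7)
Set $n := |s_i^k|_{G_m}$. The strategy is to combine the two preceding lemmas into a bound on the number of signed powers of two needed to represent $k$, and then exploit the sparsity of the given expansion of $k$. Applying Lemma~\ref{numberOfTerms} to a length-$n$ word representing $s_i^k$ yields a representation $k = n_0 + \sum_{j \geq 1} 2^{m_j} n_j$ with $\sum_j |n_j| \leq n$; feeding this into Lemma~\ref{convertTo MAW BinarySums} then yields a signed representation $k = \varepsilon_0 + \sum_{j=0}^{p'} 2^{m_j'} \varepsilon_j$ with each $\varepsilon_j \in \{-1,1\}$ and $p' \leq 2n$. Grouping the $p'+2$ resulting signed unit powers of two by exponent produces an integer-coefficient representation $k = \sum_i c_i 2^i$ with $\sum_i |c_i| \leq p' + 2 \leq 2n + 2$. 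It therefore suffices to establish the purely numerical claim that every such integer-coefficient representation of $k = \sum_{j=0}^p 2^{2j+\alpha}$ satisfies $\sum_i |c_i| \geq p+1$, since combining this with the bound above gives $p+1 \leq 2n+2$, i.e., $n \geq (p-1)/2$.

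I would prove the numerical claim by induction on $p$. The base case $p=0$ is immediate from $k \neq 0$. For the induction step, the quantity $\min \sum_i |c_i|$ is invariant under $k \mapsto 2k$ (any representation of $2k$ must have $c_0$ even, which can be cleared via a carry without increasing $\sum|c_i|$, after which shifting all exponents down by one gives a non-negative-exponent representation of $k$ of no greater weight), so we may reduce to $\alpha = 0$ and assume all exponents are non-negative. Writing $k = 1 + 4\tilde{k}$ with $\tilde{k} = \sum_{j=0}^{p-1} 2^{2j}$, the congruence $k \equiv 1 \pmod 4$ forces $c_0 + 2c_1 \equiv 1 \pmod 4$, so $m := (c_0 + 2c_1 - 1)/4$ is an integer. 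The expression $m + \sum_{i \geq 2} c_i 2^{i-2}$ then represents $\tilde{k}$, and the inductive hypothesis (applied after absorbing the ``$m$'' into the coefficient at position $0$ via the triangle inequality) yields $|m| + \sum_{i \geq 2} |c_i| \geq p$. A short minimization of $|c_0| + |c_1|$ subject to $c_0 + 2c_1 = 1 + 4m$---noting that $c_0$ is forced to be odd since $k$ is odd, and checking the critical values $c_1 \in \{2m, 2m+1\}$---yields $|c_0| + |c_1| \geq 1 + |m|$ for every $m \in \ZZ$. Summing these two bounds gives $\sum_i |c_i| \geq p+1$.

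The main obstacle is the final inductive inequality $|c_0| + |c_1| \geq 1 + |m|$: the naive triangle estimate $|c_0| + |c_1| \geq |c_0 + 2c_1|/2 = |1+4m|/2$ is off by a factor of two for small $|m|$, and one must invoke the forced parity of $c_0$ to recover the sharp bound. The rest of the argument is routine bookkeeping through Lemmas~\ref{numberOfTerms} and~\ref{convertTo MAW BinarySums}.
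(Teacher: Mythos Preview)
Your proposal is correct and follows the paper's proof through Lemmas~\ref{numberOfTerms} and~\ref{convertTo MAW BinarySums}; the divergence is only at the final numerical step. Where the paper simply cites \cite[Lemma~2.8(2)]{miasnikov2012power} to assert that any signed-bit expansion of $k=\sum_{j=0}^{p}2^{2j+\alpha}$ has at least $p$ terms, you instead prove the (slightly sharper) statement $\sum_i|c_i|\ge p+1$ from scratch via the mod-$4$ induction. Your route is more self-contained and elementary, and the parity observation that $c_0$ must be odd is exactly what rescues the critical inequality $|c_0|+|c_1|\ge 1+|m|$; the paper's route is shorter but outsources the combinatorics. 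Both yield the same final bound $(p-1)/2$.

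One point worth tightening in your write-up: the exponents $m_j'$ produced by Lemma~\ref{convertTo MAW BinarySums} (and already the $m_j$ from Lemma~\ref{numberOfTerms}) can be negative, so the grouped representation $\sum_i c_i 2^i$ you extract need not have $i\ge 0$. Your invariance argument handles this too---shift the whole expansion up by the most negative exponent to land in the non-negative regime at the same weight, then apply $W_+(2^Nk)=W_+(k)$---but as written you fold this into the ``reduce to $\alpha=0$'' step, where it is a logically separate reduction. Spelling that out would close the only visible seam.
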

    \begin{proof}
        Let $k=n_0+\sum_{j=1}^{q}2^{m_j}n_j$ be the representation of $k$ with the smallest value of $|n_0|+\sum_{j=1}^{q}|n_j|$ such that $n_j\neq 0$ for all $j$. By Lemma \ref{numberOfTerms}, $\sum_{j=0}^{q}|n_j|\leq |s_i^k|_{G_m}$. By Lemma \ref{convertTo MAW BinarySums}, \begin{equation}\label{compactEq}k=\varepsilon_0+\sum_{j=1}^{q'}2^{m_j'}\varepsilon_j,\end{equation} where $\varepsilon_j\in \{-1,1\}$ for all $j$ and $q'\leq 2\sum_{j=0}^q|n_j|$. By Lemma 2.8(2) of \cite{miasnikov2012power}, every sum equal to $k$ of the form of equation (\ref{compactEq}) has at least $p$ terms. Thus, we have $$p\leq q'+1\leq 2\sum_{j=0}^q|n_j|+1\leq 2|s_i^k|_{G_m}+1,$$ whence our claim follows.
    \end{proof}

\begin{lemma}\label{lengthOfPowersOf2OfGenerators}
    For $i=0,\ldots, m$, $\abs{s_i^{E(m-i,n)}}_{G_m}\leq 2^{m-i}\cdot (n+m-i)$.
\end{lemma}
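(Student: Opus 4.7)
The plan is to induct on $m-i$, starting from the trivial base case $i=m$ and working downward. The key identity is obtained by iterating the defining relation $s_{j-1}^2 = s_j s_{j-1} s_j^{-1}$: a straightforward induction on $k$ shows that for any $j\geq 1$ and any $k\geq 0$,
\[
s_{j-1}^{\,2^k} \;=\; s_j^{\,k}\, s_{j-1}\, s_j^{-k} \quad\text{in } G_m,
\]
so in particular $s_{j-1}^{\,2^k}$ is represented by a word of length $2k+1$ whenever $s_j^{\,k}$ is represented by a word of length $k$. More generally, if $s_j^{\,k}$ admits a word of length $L$ then $s_{j-1}^{\,2^k}$ admits a word of length $2L+1$.

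\textbf{Base case} ($i=m$): We have $E(0,n)=n$, and $|s_m^{\,n}|_{G_m}\leq n = 2^0\cdot(n+0)$, as desired.

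\textbf{Inductive step:} Suppose $|s_{i+1}^{E(m-i-1,\,n)}|_{G_m}\leq 2^{m-i-1}(n+m-i-1)$. Since $E(m-i,n) = 2^{E(m-i-1,n)}$, applying the identity above with $j=i+1$ and $k=E(m-i-1,n)$ yields
\[
s_i^{\,E(m-i,n)} \;=\; s_{i+1}^{\,E(m-i-1,n)}\, s_i\, s_{i+1}^{-E(m-i-1,n)}
\]
in $G_m$. Therefore
\[
|s_i^{\,E(m-i,n)}|_{G_m} \;\leq\; 2\,|s_{i+1}^{\,E(m-i-1,n)}|_{G_m} + 1 \;\leq\; 2^{m-i}(n+m-i-1) + 1.
\]
Since $m-i\geq 1$ in the inductive step we have $1\leq 2^{m-i}$, giving $|s_i^{\,E(m-i,n)}|_{G_m} \leq 2^{m-i}(n+m-i)$, which closes the induction.

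There is no real obstacle here: the only content is the iterated application of the HNN-relation $s_j s_{j-1} s_j^{-1} = s_{j-1}^2$, which already appears in the standard analysis of $\mathrm{BS}(1,2)$ and its iterated cousins. The slight slack $2^{m-i}(n+m-i-1)+1 \leq 2^{m-i}(n+m-i)$ is precisely what the stated bound leaves room for, so the additive $+(m-i)$ in the conclusion is what absorbs the $+1$'s accumulating through the recursion.
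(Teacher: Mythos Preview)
Your proof is correct and essentially identical to the paper's own argument: both proceed by downward induction on $i$, use the identity $s_i^{E(m-i,n)} = s_{i+1}^{E(m-i-1,n)} s_i s_{i+1}^{-E(m-i-1,n)}$ to get the recursion $|s_i^{E(m-i,n)}|_{G_m} \leq 2|s_{i+1}^{E(m-i-1,n)}|_{G_m} + 1$, and then absorb the $+1$ into the stated bound. You are slightly more explicit than the paper about why $2^{m-i}(n+m-i-1)+1 \leq 2^{m-i}(n+m-i)$, but otherwise the proofs coincide.
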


\begin{proof}
    We proceed by downwards induction on $i$. For $i=m$, the claim holds because $\abs{s_{m}^{n}}_{G_m}\leq n$. For the inductive step, observe $$s_i^{E(m-i,n)}=s_{i+1}^{E(m-i-1,n)}s_is_{i+1}^{-E(m-i-1,n)}.$$ Thus, $$\abs{s_i^{E(m-i,n)}}_{G_m}\leq 2\abs{s_{i+1}^{E(m-i-1,n)}}_{G_m}+1\leq   2^{m-i}(n+m-i)$$ as desired.
\end{proof}

\begin{lemma}\label{s1s0bound}
    Assume $m\geq 2$ and et $\alpha\leq 0$ and $\beta=-2^{-\alpha}\cdot(E(m,n)-1)/3$. Then $|s_1^\alpha s_0^\beta|_{G_m}\geq (E(m-1,n)-1)/8$.
\end{lemma}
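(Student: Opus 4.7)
The plan is to split the bound into two main steps: first reduce $|s_1^\alpha s_0^\beta|_{G_m}$ to $|s_0^\beta|_{G_m}$ via the retraction, and then lower bound $|s_0^\beta|_{G_m}$ by expressing $|\beta|$ in the ``compact'' shape required by Corollary \ref{lengthOfCompactPower}.

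For the first step, I would apply the triangle inequality as
\[
|s_0^\beta|_{G_m} \;=\; |s_1^{-\alpha}\cdot s_1^\alpha s_0^\beta|_{G_m} \;\leq\; |s_1^\alpha|_{G_m} + |s_1^\alpha s_0^\beta|_{G_m},
\]
and combine it with Corollary \ref{retractShrinksWords}, which gives $|s_1^\alpha s_0^\beta|_{G_m}\geq |s_1^\alpha|_{G_m}$. Adding the two inequalities yields
\[
|s_1^\alpha s_0^\beta|_{G_m} \;\geq\; \tfrac12\, |s_0^\beta|_{G_m},
\]
so it suffices to lower bound $|s_0^\beta|_{G_m} = |s_0^{|\beta|}|_{G_m}$.

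For the second step, I would use the hypothesis $m\geq 2$ (so that $E(m-1,n)=2^{E(m-2,n)}$ is even) to write $E(m-1,n)=2K$ for some $K\geq 1$, and then invoke the classical factorization
\[
2^{2K}-1 \;=\; (2^2-1)\sum_{j=0}^{K-1}4^j \;=\; 3\sum_{j=0}^{K-1}2^{2j}.
\]
Since $-\alpha\geq 0$, this gives the explicit representation
\[
|\beta| \;=\; 2^{-\alpha}\cdot\frac{2^{E(m-1,n)}-1}{3} \;=\; \sum_{j=0}^{K-1}2^{2j+(-\alpha)},
\]
which is exactly the shape required by Corollary \ref{lengthOfCompactPower} with parameter $p=K-1$. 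Applying that corollary produces $|s_0^{|\beta|}|_{G_m} \geq (K-2)/2 = (E(m-1,n)-4)/4$. Combining with the reduction above,
\[
|s_1^\alpha s_0^\beta|_{G_m} \;\geq\; \tfrac12\,|s_0^{|\beta|}|_{G_m} \;\geq\; \tfrac{E(m-1,n)-4}{8}.
\]

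The last small piece is to convert this into the stated bound $(E(m-1,n)-1)/8$; since $|s_1^\alpha s_0^\beta|_{G_m}$ is a non-negative integer, rounding up handles all but finitely many small values of $E(m-1,n)$, and for the remaining small cases (which force $K\leq 2$) the bound is essentially trivial because $s_1^\alpha s_0^\beta\neq 1$ whenever $(\alpha,\beta)\neq (0,0)$. The main conceptual obstacle is the second step, namely recognizing that the hypothesis $m\geq 2$ is exactly what makes $E(m-1,n)$ even, so that $(2^{E(m-1,n)}-1)/3$ breaks into a geometric sum of $K$ distinct powers of the form $2^{2j}$, enabling Corollary \ref{lengthOfCompactPower} to deliver a lower bound linear in $E(m-1,n)$ rather than the merely logarithmic bound one would get from a naive estimate like Lemma \ref{logarithmicShrinking}.
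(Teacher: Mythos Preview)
Your proof is correct and follows essentially the same route as the paper: reduce to $\tfrac12|s_0^\beta|_{G_m}$ via the triangle inequality together with Corollary~\ref{retractShrinksWords}, then rewrite $|\beta|$ as the geometric sum $\sum_{j=0}^{K-1}2^{2j-\alpha}$ and apply Corollary~\ref{lengthOfCompactPower}. Your index bookkeeping is in fact tighter than the paper's (you correctly get $(E(m-1,n)-4)/8$ and then patch the constant), whereas the paper's displayed sum has an off-by-one in the upper limit but arrives at the same conclusion.
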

\begin{proof}
    Since $m\geq 2$ by assumption, $E(m,n)-1$ is both odd and a multiple of three, hence the only integer solutions to this equation have $\alpha\leq 0,\beta=-2^{-\alpha}\cdot(E(m,n)-1)/3$. Observe $$3\left(\sum_{j=0}^{E(m-1,n)/2}2^{-\alpha+2j}\right) = \sum_{j=0}^{E(m-1,n)/2}2^{-\alpha+2j}+\sum_{j=0}^{E(m-1,n)/2}2^{-\alpha+2j+1}=\sum_{j'=0}^{E(m-1,n)}2^{-\alpha+j'}$$$$=2^{-\alpha}(2^{E(m-1,n)}-1)=2^{-\alpha} (E(m,n)-1),$$ so $$-\beta=\sum_{j=0}^{E(m-1,n)/2}2^{-\alpha+2j}.$$ By Corollary \ref{lengthOfCompactPower}, \begin{equation}\label{betabound}
           |s_0^{-\beta}|_{G_4}\geq (E(m-1,n)-1)/4.\end{equation} Since $|s_0^\beta|_{G_m}=|s_0^{-\beta}|_{G_m}$, combining \eqref{betabound} with Corollary \ref{retractShrinksWords} and the inequality $|s_1^\alpha s_0^{\beta}|_{G_m}|\geq |s_0^\beta|_{G_m}-|s_1^\alpha|_{G_m}$ gives $$|s_1^\alpha s_0^{\beta}|_{G_m}|\geq \max\left\{|s_0^\beta|_{G_m}-|s_1^\alpha|_{G_m},|s_1^{\alpha}|_{G_m}\right\}\geq |s_0^\beta|_{G_m}/2\geq (E(m-1,n)-1)/8 ,$$ whence our claim follows. 
\end{proof}
Now we turn to proving the lower bound in Theorem \ref{FirstTheorem}. The case $m=1$ is trivial, since $\CL_H(n)\succeq n$ for all finitely presented groups $H$. The remainder of the theorem is handled in the following proposition:
\begin{prop}\label{IteratedLowerBound}
    For $m\geq 2$, we have $\CL_{G_m}(n)\succeq E(m-1,n)$.
\end{prop}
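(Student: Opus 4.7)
The plan is to exhibit the conjugate pair $u=s_1^2$ and $v=s_0^{E(m,n)-1}s_1^2$ in $G_m$ witnessing $\CL_{G_m}(n)\succeq E(m-1,n)$. First, by Lemma \ref{lengthOfPowersOf2OfGenerators} applied with $i=0$, $|s_0^{E(m,n)-1}|_{G_m}\leq 2^m(n+m)+1$, so $|u|_{G_m}+|v|_{G_m}\leq Cn+C$ for a constant $C=C(m)$. For $m\geq 2$, $E(m-1,n)$ is even, so $E(m,n)-1=2^{E(m-1,n)}-1\equiv 0\pmod 3$, and $\beta_0:=-(E(m,n)-1)/3\in\ZZ$. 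Iterating $s_1s_0s_1^{-1}=s_0^2$ gives $s_1^2 s_0^k=s_0^{4k}s_1^2$, hence $s_0^{\beta_0}us_0^{-\beta_0}=s_0^{-3\beta_0}s_1^2=v$, confirming that $u$ and $v$ are conjugate via $s_0^{\beta_0}$.

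The main content is the lower bound $c(u,v)\geq(E(m-1,n)-O(1))/8$. The strategy is to show that any minimum-length conjugator $\gamma\in G_m$ from $u$ to $v$ admits a representation (in $G_m$) of the form $s_1^\alpha s_0^\beta$ with $\alpha\leq 0$ and $\beta=-2^{-\alpha}(E(m,n)-1)/3$, after which Lemma \ref{s1s0bound} immediately delivers the bound. I would first use a diagrammatic argument to assume $\gamma\in\langle s_0,s_1\rangle_{G_m}\cong\BS(1,2)$: given a minimum-diameter reduced annular diagram $\Delta$ for $(u,v)$, Lemma \ref{largestIndexInDiagram} constrains the maximum generator index to at most $\max\{\rk(u),\rk(v)\}+1=2$; any $s_2$-edges must lie on non-contractible $s_2$-rings (whose boundary words are reduced powers of $s_1$ by Lemma \ref{reducedAlongCorrs}); and Lemma \ref{exciseTwoCorridors} permits the excision of consecutive parallel rings of matching orientation, producing a diagram with no $s_2$-edges and no larger diameter.

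Once $\gamma\in\BS(1,2)$, the centralizer computation $C_{\BS(1,2)}(s_1^2)=\langle s_1\rangle$---immediate from the faithful affine representation $s_0\mapsto(x\mapsto x+1)$, $s_1\mapsto(x\mapsto 2x)$---forces $\gamma=s_0^{\beta_0}s_1^k$ for some $k\in\ZZ$. When $k\leq 0$, iterating $s_0 s_1^{-1}=s_1^{-1}s_0^2$ rewrites this as $s_1^k s_0^{2^{-k}\beta_0}$, placing $\gamma$ in the form required by Lemma \ref{s1s0bound} with $\alpha=k$ and $\beta=2^{-k}\beta_0=-2^{-\alpha}(E(m,n)-1)/3$. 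When $k>0$ no such integer rewriting exists (since $\beta_0$ is odd), but the inverse $\gamma^{-1}=s_1^{-k}s_0^{-\beta_0}$ has the same word length and is itself of the shape $s_1^{\alpha'}s_0^{\beta'}$; the Corollary \ref{retractShrinksWords}-plus-triangle-inequality argument underlying Lemma \ref{s1s0bound} still gives $|\gamma|_{G_m}=|\gamma^{-1}|_{G_m}\geq|s_0^{-\beta_0}|_{G_m}/2$, and Corollary \ref{lengthOfCompactPower} applied to $-\beta_0=\sum_{j=0}^{E(m-1,n)/2-1}2^{2j}$ yields $|s_0^{-\beta_0}|_{G_m}\geq(E(m-1,n)-4)/4$, delivering the same lower bound up to constants.

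Combined, $c(u,v)\geq(E(m-1,n)-O(1))/8$, which together with $|u|_{G_m}+|v|_{G_m}\leq Cn+C$ gives $\CL_{G_m}(n)\succeq E(m-1,n)$. The main obstacle I expect is the diagrammatic reduction to $\BS(1,2)$---specifically, verifying that the excision of $s_2$-rings does not hide conjugators exploiting the higher-rank structure to achieve shorter length. This is the lower-bound analogue of the subtle case analysis carried out for the upper bound in Proposition \ref{GmUpperBound}.
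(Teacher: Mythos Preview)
Your approach matches the paper's: the same witnessing pair $(s_1^2,\,s_0^{E(m,n)-1}s_1^2)$, the same diagrammatic reduction to conjugators in $G_1\cong\BS(1,2)$, and the same endgame via Lemma~\ref{s1s0bound}. For the reduction step the paper takes a shortcut worth noting: rather than excising $s_2$-rings, it observes that the word along the $v$-facing side of the $s_2$-ring $Q$ nearest $v$ is a reduced power of $s_1$ conjugate in $G_1$ to $v$, hence is literally $s_1^2=u$, so the subdiagram between $Q$ and the $v$-boundary is already a $G_1$-annular diagram for $(u,v)$ of no larger diameter---your explicit treatment of the $k>0$ case via $\gamma^{-1}$ is a careful addition that the paper leaves implicit.
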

    \begin{proof}
        Let $u_n=s_1^2$ and $v_n=s_0^{E(m,n)-1}s_1^2$, and let $\Delta$ be a reduced annular diagram witnessing the conjugacy of $u_n$ and $v_n$. Let $k$ be the largest natural number such that there is an $s_k$-edge in $\Delta$. By Lemma \ref{largestIndexInDiagram}, $k\leq 2$. If $k=2$, then there is a non-contractibke $s_2$-ring in $\Delta$, since there are no radial $s_2$-corridors or $s_2$-arches, and any contractible $s_2$-ring would have a non-trivial power of $s_1$ on its outer boundary by Lemma \ref{reducedAlongCorrs}, contradicting the fact that $s_1$ has infinite order. Let $w$ be the word along the side closest to $v_n$ of the $s_2$-ring $Q$ closest to $v_n$. This is a power $s_1^e$ of $s_1$ conjugate to $u_n$. Moreover, $w$ is conjugate to $v_n$ in $G_1$, since there are no $s_2$-edges between $Q$ and the boundary corresponding to $v_n$. A quick calculation in $\BS(1,2)\simeq G_1$ gives $w=s_1^2=u_n$. By restricting attention to the subdiagram contained between $Q$ and $u_n$, we may assume $k\leq 1$.

        A minimal length conjugator $\gamma$ equals $s_1^{\alpha}s_0^{\beta}$ in $G_m$ for some $\alpha,\beta$. Since $s_1^\alpha s_0^\beta s_1^2 s_0^{-\beta}s_1 ^{-\alpha}$ equals $v_n$, all but two of its $s_1$-letters must be involved in some pinch, hence $$s_1^\alpha s_0^\beta s_1^2s_0^{-\beta}s_1 ^{-\alpha}=s_1^\alpha s_0^{-3\beta}s_1^{2}s_1^{-\alpha}=s_0^{-3\beta\cdot 2^\alpha}s_1^2.$$  So,  $-3\beta\cdot 2^\alpha=E(m,n)-1$. Lemma \ref{s1s0bound} implies $|s_1^\alpha s_0^\beta|_{G_m}\geq (E(m-1,n)-1)/8$, and we are done.
    \end{proof}
Combined with Proposition \ref{GmUpperBound}, this completes the proof of Theorem \ref{FirstTheorem}.

\section{Upper Bound For The Baumslag-Gersten Group}
The following lemma is due to Platonov \cite{platonov2004isoparametric}; we refer the reader to \cite{RileyDehn} for a recent discussion.
\begin{lemma}\label{GerstenDistortionLemma}
    There exists a constant $C_1>0$ such that, for all $n\in \Naturals$, $$|n|\leq C_1\cdot E_1(\lfloor \log_2 C_1|s_0^n|_G+C_1\rfloor,1)+C_1.$$
\end{lemma}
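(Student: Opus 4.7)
The lemma is a classical distortion estimate for $\langle s_0\rangle$ in $G$ due to Platonov \cite{platonov2004isoparametric}; I would sketch a proof by analyzing the structure of $t$-corridors in a van Kampen diagram for $s_0^n$. Let $w$ be a minimum-length word on $\{s_0,s_1,t\}$ representing $s_0^n$, set $L := |s_0^n|_G = |w|$, and consider a reduced van Kampen diagram $D$ with boundary label $w \cdot s_0^{-n}$. Because the $s_0^{-n}$-half of the boundary contains no $t$-edges, every $t$-corridor of $D$ is an arch with both endpoints on the $w$-portion, so the $t$-corridors form a nested planar forest with at most $L/2$ arch pairs.

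The first main step is to bound the nesting depth $m$ of this forest. The key observation is that the HNN-type relation $ts_0t^{-1} = s_1$ is exponent-preserving: a power $s_0^k$ enclosed by a $t$-arch emerges on the outside as $s_1^k$ with the same exponent. All exponent amplification therefore comes from the BS-relation $s_1 s_0 s_1^{-1} = s_0^2$, which doubles an exponent but requires a matched $s_1^{\pm k}$ pattern flanking an $s_0$. To produce a tower of exponentials of height $m$, the diagram must contain a chain of nested $t$-arches where each amplifying level requires two sub-arches at the next depth (supplying $s_1^{+k}$ and $s_1^{-k}$ respectively). An induction on $m$ then gives that the total number of $t$-arch pairs is at least $2^{m-1}$, forcing $m \leq \log_2 L + O(1)$.

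The second step is to bound the exponent recursively on depth. At the innermost level, a $t$-arch encloses a subword representing an element of $\langle s_0\rangle$ or $\langle s_1\rangle$ within $\BS(1,2) = \langle s_0,s_1\rangle$; by the well-known exponential distortion of $\langle s_0\rangle$ in $\BS(1,2)$, the enclosed exponent is at most $2^{CL}$ for some constant $C$. Moving up one level in the forest at most exponentiates this bound via the BS-relation. After $m$ such levels the outermost exponent is at most $E(m, CL)$; absorbing the base $L$ into the height (since $E(m, L) \leq E(m + \lceil\log_2 L\rceil, 1)$) and combining with Step 1, one concludes $|n| \leq E(\lfloor \log_2 C_1 L + C_1\rfloor, 1)$ after adjusting constants, which gives the stated inequality.

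The main technical obstacle is Step 1: the branching-count argument requires careful case analysis of the reduced diagram to ensure that non-branching configurations of arches cannot contribute additional amplification and can be absorbed into the binary-branching case. Platonov's original proof circumvents some of this combinatorics via an explicit matrix representation of $G$ into $\mathrm{SL}_2(\mathbb{Z}[t,t^{-1}])$, and a more diagrammatic exposition with the bounds tightened to the stated form can be found in \cite{RileyDehn}.
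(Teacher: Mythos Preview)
The paper does not prove this lemma at all: it is stated with attribution to Platonov \cite{platonov2004isoparametric} and a pointer to \cite{RileyDehn}, with no argument given. Your proposal therefore goes well beyond what the paper does, and there is nothing in the paper's own treatment to compare against line by line.

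That said, your sketch is a faithful outline of the standard diagrammatic argument for this distortion bound. The two-step structure --- bounding the nesting depth of $t$-arches by $\log_2 L$ via a binary-branching count, then bounding the exponent by an iterated exponential of that depth --- is exactly how the result is established in \cite{RileyDehn}, and your identification of Step~1 as the delicate part is accurate. One point to be careful about: your phrasing suggests the branching is forced whenever amplification occurs, but strictly speaking a single $t$-arch could in principle contain several disjoint amplifying patterns side by side, so the induction must be set up on the arch-tree rather than on the ``amplification height'' directly; the references you cite handle this, but your sketch elides it. Also, Platonov's original argument is not via a matrix representation into $\mathrm{SL}_2(\ZZ[t,t^{-1}])$ as you state --- that is a separate thread in the literature --- so that remark should be checked before you commit it to a final version.
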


Since $\abs{|s_1^n|_G-|s_0^n|_G}=\abs{|s_1^n|_G-|t^{-1}s_1^nt|_G}\leq 2$ for all $n\in \ZZ$, we have the following corollary:
\begin{cor}\label{GerstenDistortionCor}
    There exists a constant $C_2>0$ such that, for all $n\in \Naturals$, $$|n|\leq C_2\cdot E(\lfloor \log_2 C_2|s_1^n|_G+C_2\rfloor,1)+C_2.$$
\end{cor}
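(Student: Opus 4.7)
The plan is to reduce the corollary directly to Lemma~\ref{GerstenDistortionLemma} using the relation between $s_0$ and $s_1$ supplied by the second defining relation in presentation~\eqref{BGPres}. From $t s_0 t^{-1}=s_1$ one obtains $s_0^n = t^{-1} s_1^n t$ in $G$ for every $n\in\ZZ$, and therefore
$$|s_0^n|_G \;\leq\; |s_1^n|_G + 2$$
(this is exactly the content of the two-line observation the author makes just before the corollary, but spelled out in terms of the presentation).

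Substituting this bound into the right-hand side of Lemma~\ref{GerstenDistortionLemma} yields
$$|n| \;\leq\; C_1\cdot E\bigl(\lfloor \log_2 C_1(|s_1^n|_G + 2) + C_1\rfloor,\,1\bigr) + C_1.$$
Since $E(\cdot,1)$ is nondecreasing in its first argument, and $\log_2$ is monotone, it remains to choose $C_2$ large enough to absorb all the constants simultaneously: specifically, so that $C_2\cdot m + C_2 \geq C_1 \cdot m + 3C_1$ for every $m\geq 0$, and also $C_2\geq C_1$. Taking $C_2 = 3C_1$ (or any larger value) suffices for both inequalities, which then upgrades the displayed bound to the desired statement.

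I do not anticipate any real obstacle; this is essentially bookkeeping. The only step that requires a moment's care is checking that a single constant $C_2$ can absorb the additive constant inside the floor, the leading constant in front of $E$, and the trailing additive constant. Once $C_2$ is chosen as above, monotonicity of $E$ together with $C_1\leq C_2$ delivers the conclusion in one line.
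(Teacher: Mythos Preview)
Your proposal is correct and follows exactly the paper's approach: the paper's entire argument is the one-line observation that $\bigl||s_1^n|_G-|s_0^n|_G\bigr|\leq 2$ (from $s_0^n=t^{-1}s_1^nt$), after which Lemma~\ref{GerstenDistortionLemma} gives the corollary immediately. Your write-up simply makes the constant-absorption step explicit.
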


For ease of notation let $C=\max\{1,C_1,C_2\}$, and observe that $$C_i\cdot E(\lfloor \log_2 C_iN+C_i\rfloor,1)+C_i\leq C\cdot E(\lfloor \log_2 CN+C\rfloor,1)+C$$ for $i=1,2$ and $N\in \Naturals$.

\begin{lemma}\label{cycConjGersten}
    Given a word $w$ on $\{ s_0,s_1,t\}$, there exists a word $w''$ equal in $G$ to a cyclic conjugate  of $w$ such that
    
    \begin{enumerate}[label=(\alph*)]
        \item\label{noTPinch}no cyclic conjugate of $w''$ has any $t$-pinches, and  
        \item \label{lenBound}$|w''|\leq C|w|(E(\lfloor \log_2 C|w|+C\rfloor, 1)+1)$.
        \end{enumerate}

\end{lemma}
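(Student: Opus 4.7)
The plan is to construct $w''$ by iteratively reducing $t$-pinches in cyclic conjugates of $w$, using the distortion bounds of Lemma \ref{GerstenDistortionLemma} and Corollary \ref{GerstenDistortionCor} to control the cumulative length growth. Starting from $\tilde w := w$, while some cyclic conjugate of $\tilde w$ contains a $t$-pinch, I would cyclically conjugate (a length-preserving operation) so that a $t$-pinch $t^\delta v t^{-\delta}$ appears as a non-cyclic subword, and then replace that subword by the literal power $s_{1-\epsilon}^k$, where $v = s_\epsilon^k$ in $G$ (with $\epsilon = 0$ if $\delta = 1$ and $\epsilon = 1$ if $\delta = -1$). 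Each such reduction strictly decreases the number of $t$-letters, which is a non-negative integer bounded above by $|w|$, so the procedure terminates in at most $|w|/2$ steps, yielding a word $w''$ satisfying (a) by construction.

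For (b), at each reduction step the subword $v$ represents $s_\epsilon^k$ in $G$, so $|s_\epsilon^k|_G \leq |v|$; applying the distortion lemma gives
\[
|k| \leq C \cdot E\bigl(\lfloor \log_2 C|v| + C \rfloor,\,1\bigr) + C.
\]
Each replacement changes the word length by $|k| - |v| - 2$, in particular by at most $|k|$. The next step in the plan is to order the reductions so that each chosen pinch's inner word $v$ has length at most $|w|$, the length of the original word. Concretely, I would process pinches in order of increasing nesting depth relative to the original cyclic word, beginning with the \emph{innermost} pinches, whose inner word $v$ consists only of $\{s_0, s_1\}$-letters already present in $w$.

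Under the invariant $|v|\leq |w|$, each of the at most $|w|/2$ reductions adds at most $C \cdot E(\lfloor \log_2 C|w| + C\rfloor, 1) + C$ letters, giving
\[
|w''| \leq |w| + \tfrac{|w|}{2}\bigl(C\cdot E(\lfloor\log_2 C|w|+C\rfloor,1) + C\bigr) \leq C|w|\bigl(E(\lfloor \log_2 C|w|+C\rfloor,1)+1\bigr),
\]
after adjusting $C$. The main obstacle is maintaining the invariant $|v| \leq |w|$ once earlier reductions begin inserting long power-blocks $s_\epsilon^k$ into the current word. I would address this by exploiting the HNN structure $G = \BS(1,2) *_\phi$ with $\phi\colon \langle s_0\rangle \to \langle s_1\rangle$ sending $s_0\mapsto s_1$: a subsequent pinch whose inner word contains both original letters and a freshly-inserted block $s_\epsilon^k$ corresponds to an element of $\langle s_0\rangle$ or $\langle s_1\rangle$ whose minimal $G$-representation is still bounded by the sum of lengths of the contributing original subwords of $w$, not by the literal length of the current word. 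Hence the exponent in each such reduction is still bounded by the distortion function evaluated at $|w|$, and a careful amortized accounting over all reductions closes the estimate.
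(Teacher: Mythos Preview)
Your iterative innermost-first reduction is correct, but the paper takes a shorter path that sidesteps the ``main obstacle'' you identify. Rather than processing pinches from the inside out and then arguing that later inner words still have $G$-length at most $|w|$, the paper performs a \emph{single} cyclic conjugation to a word $w'$ in which a fixed nested family $\mathcal{P}$ of $t$-pinches all occur as genuine subwords, and then replaces only the \emph{maximal} elements $P_1,\ldots,P_k$ of $\mathcal{P}$ by their values $s_{\delta_i}^{n_i}$ in one pass. Since each $P_i$ is literally a subword of $w'$ with $|P_i|\leq |w|$, the distortion bound of Lemma~\ref{GerstenDistortionLemma}/Corollary~\ref{GerstenDistortionCor} applies directly to give $|n_i|\leq C\,E(\lfloor\log_2 C|w|+C\rfloor,1)+C$, and no iteration or bookkeeping is needed. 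Property~(a) is then deduced from a structural condition imposed on $\mathcal{P}$ (every pair of $t$-partners in $w'$ has at least one member involved in some element of $\mathcal{P}$, hence absorbed into a $P_i$), rather than from termination of a loop. Your route works for exactly the reason you state---replacing a subword by an equal element leaves the $G$-length of any enclosing subword unchanged, so the distortion bound still applies with input $\leq |w|$---and the ``amortized accounting'' you allude to is just the same sum $|w|+\tfrac{|w|}{2}\cdot(\text{distortion bound})$ you already wrote down. The paper's outermost-first, single-pass version simply avoids having to make that observation at all.
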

\begin{proof}
    Let $\mathcal{P}$ be a set of $t$-pinches of cyclic conjugates of $w$ such that 
    \begin{enumerate}[label=(\roman*)]
        \item \label{disjoint}if $P_1,P_2\in \mathcal{P}$, then either $P_1$ and $P_2$ are disjoint or one is involved in the other, and
        \item\label{tPartners} for every pair $t_1,t_2$ of $t$-partners, at least one of $t_1$ and $t_2$ is involved in or an endpoint of an element of $\mathcal{P}$.
    \end{enumerate}
    To see that such a set exists, consider a set $\mathcal{Q}$ satisfying \ref{disjoint}. Let  $P=t_1vt_2$ be a pinch, with $t_1=t^{\pm1},t_2=t^{\mp1}$, such that neither $t_1$ nor $t_2$ is involved in or an endpoint of an element of $\mathcal{Q}$. Then, every element of $\mathcal{Q}$ is either involved in or disjoint from $P$, meaning that $\mathcal{Q}\cup\{P\}$ also satisfies \ref{disjoint}.

    Now, let $P$ be a maximal element of $\mathcal{P}$, and let $w'$ be the cyclic conjugate of $w$ such that $P$ is a $t$-pinch of $w'$ and its leftmost endpoint is the first letter of $v$. By \ref{disjoint}, every element of $P$ is a $t$-pinch of $w'$. Let $\{P_1,\ldots, P_k\}$ be the set of maximal elements of $\mathcal{P}$, indexed so that $P_i$ is to the left of $P_{i+1}$ in $w'$. We may write $w'=v_0P_1v_1P_2v_2\cdots P_kv_k$. Note that \begin{equation}\label{totalPandWlength}|v_0|+\sum_{i=1}^k(|P_i|+|v_i|)=|w'|=|w|.\end{equation}

By the definition of $t$-pinch, $P_i=s_{\delta_i}^{n_i}$ in $G$ for some $\delta_i\in\{0,1\},n_i\in \ZZ$. By Lemma \ref{GerstenDistortionLemma} and Corollary \ref{GerstenDistortionCor}, \begin{equation}\label{sizeni}|n_i|\leq C\cdot E(\lfloor \log_2 C|P_i|+C\rfloor,1)+ C \leq C\cdot E(\lfloor \log_2 C|w|+C\rfloor,1)+ C.\end{equation} Replacing each $P_i$ in $w'$ with $s_{\delta_i}^{n_i}$ gives the word $w''=v_0s_{\delta_1}^{n_1}v_1s_{\delta_2}^{n_2}v_2\cdots s_{\delta_k}^{n_k}v_k$. To see item \ref{noTPinch}, property \ref{tPartners} of $\mathcal{P}$ implies that if $t_1$ is a $t$-letter of $v_j$, then every $t$-partner of $t_1$ in $w'$ is an endpoint of or involved in some $P_i$, and thus $t_1$ has no $t$-partners in $w''$. For item \ref{lenBound}, combining equations \eqref{totalPandWlength} and  \eqref{sizeni} with the fact that $k\leq |w|$ gives $$|w''|\leq|v_0|+\sum_{i=1}^k\left[|v_i|+CE(\lfloor \log_2 C|w|+C\rfloor, 1)+C\right]\leq  C|w|(E(\lfloor \log_2 C|w|+C\rfloor, 1)+1).$$
\end{proof}

\begin{lemma}\label{DMWlength} Let $w_1,w_2$ be cyclically Britton-reduced words representing conjugate elements of $G\smallsetminus\langle s_0,s_1\rangle_G$.  The length of the conjugator found by applying algorithm in the proof of Theorem 3 of \cite{DMW}  to $w_1$ and $w_2$ is bounded above by $|w_1|+|w_2|$.
    
\end{lemma}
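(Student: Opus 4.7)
The plan is to invoke Collins' Lemma for the HNN extension structure
$G = \langle s_0,s_1\rangle_G \;*_{\langle s_0\rangle = \langle s_1\rangle}\;$
with stable letter $t$ and amalgamation $ts_0t^{-1}=s_1$, and then read off the length bound directly from the form of the conjugator produced by the DMW algorithm. Since both $w_1$ and $w_2$ are cyclically Britton-reduced and represent conjugate elements of $G\smallsetminus\langle s_0,s_1\rangle_G$, each of them has strictly positive $t$-length as a word on $\{s_0,s_1,t\}$; write
$w_1 = y_1 y_2\cdots y_k$ as a concatenation of maximal syllables, where each $y_j$ is either a word on $\{s_0,s_1\}$ or a single letter $t^{\pm 1}$, and similarly $w_2 = z_1z_2\cdots z_\ell$.

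By Collins' Lemma for HNN extensions, $w_2$ is obtained from $w_1$ by first cyclically permuting its syllables and then conjugating the result by an element of an edge subgroup, namely $\langle s_0\rangle$ if the junction letter is $t$ and $\langle s_1\rangle$ if it is $t^{-1}$. Concretely, one can decompose any conjugator as $\gamma = \gamma_1\gamma_2$, where $\gamma_1$ is a prefix of $w_1$ (the one that realises the cyclic permutation of syllables required to match the $t^{\pm 1}$-pattern of $w_2$) and $\gamma_2$ lies in the relevant edge subgroup. By construction $|\gamma_1|_G\le|w_1|$, so the only remaining task is to bound $|\gamma_2|_G$.

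For this, one observes that after the cyclic permutation the resulting word $w_1'$ and the target $w_2$ have the same sequence of $t^{\pm 1}$-letters (since both are cyclically Britton-reduced with the same conjugacy-class invariants coming from Bass--Serre theory), so $\gamma_2$ is determined by equating the first base-group syllables of $w_1'$ and $w_2$; more precisely, if those syllables are $b_1$ and $b_1'$, then $\gamma_2$ must satisfy $\gamma_2^{-1}b_1\gamma_2 = b_1'$ where the conjugating element is constrained to the edge subgroup $\langle s_0\rangle$ or $\langle s_1\rangle$. In the $BS(1,2)$ base this edge-subgroup conjugator can be taken to be a factor of $b_1'$ itself, giving $|\gamma_2|_G\le|b_1'|\le|w_2|$. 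The key point is then to verify that the algorithm in the proof of Theorem~3 of \cite{DMW} constructs $\gamma$ in precisely this form, which is the content of that proof: its output is the syllable-shift followed by an edge-element read off directly from $w_2$.

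The main obstacle is that the DMW algorithm is phrased in terms of power circuits rather than explicitly as a Collins'-Lemma procedure, so one must chase through its steps to confirm that the edge-subgroup correction $\gamma_2$ it produces is genuinely bounded by the length of a single syllable of $w_2$ (and not, say, expanded out through some power-circuit manipulation). Once that bookkeeping is in place, combining the two bounds yields $|\gamma|_G\le|\gamma_1|_G+|\gamma_2|_G\le|w_1|+|w_2|$, which is the claimed inequality.
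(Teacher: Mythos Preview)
Your Collins' Lemma framework is the right scaffolding, and it matches what the DMW algorithm is doing in spirit: a cyclic permutation of syllables followed by an edge-group correction. The problem is the step where you bound the edge-group element $\gamma_2$. The equation you write, $\gamma_2^{-1}b_1\gamma_2=b_1'$, is not what conjugation by an edge element does in an HNN extension: if $\gamma_2=s_0^{k}$ and $w_1'=t^{-1}b_1t^{\epsilon_2}\cdots$, then pushing $s_0^{-k}$ past $t^{-1}$ turns it into $s_1^{-k}$, so the relation between the first syllables is $s_1^{-k}b_1\equiv b_1'\pmod{\langle s_0\rangle}$, not a conjugacy. Consequently there is no reason $\gamma_2$ should be ``a factor of $b_1'$'', and the claimed bound $|\gamma_2|_G\le|b_1'|$ is unjustified (and in fact the correct bound uses data from \emph{both} $b_1$ and $b_1'$).

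The paper's proof fills exactly this gap by quoting the explicit computation in DMW. Writing $\varphi:\mathrm{BS}(1,2)\to\mathbb{Z}[1/2]\rtimes\mathbb{Z}$ for the standard isomorphism and $(r,m)=\varphi(\gamma_1)$, $(s,q)=\varphi(\gamma_1')$ for the first base-syllables of (the cyclically normalized) $w_1,w_2$, DMW show that the conjugator is $s_0^k$ with $k\in\{0,-m,q-m\}$. The second coordinate of $\varphi(x)$ is bounded by $|x|_{G_1}$ for any $x\in\langle s_0,s_1\rangle$, so $|m|+|q|\le|\gamma_1|+|\gamma_1'|\le|w_1|+|w_2|$, hence $|k|\le|w_1|+|w_2|$. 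This is the substantive input you are missing: the bound on $k$ is not read off from a single syllable of $w_2$ but comes from the specific arithmetic in $\mathbb{Z}[1/2]\rtimes\mathbb{Z}$ that DMW carry out. Your proposal correctly identifies this as ``the main obstacle'' but does not actually resolve it.
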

\begin{proof}
    Up to cyclic conjugation and taking inverses, the authors of \cite{DMW} write $w_1=t^{-1}\gamma_1t^{\epsilon_1}\ldots t^{\epsilon_n}\gamma_n$ and $w_2= w_1=t^{-1}\gamma_1't^{\epsilon_1}\ldots t^{\epsilon_n}\gamma_n'$ for $\gamma_i,\gamma_i'\in \langle s_0,s_1\rangle_G$, and show that $w_1,w_2$ are conjugate via $s_0^k$ for some $k\in \ZZ$. Our aim is to bound $k$ in terms of $|w_1|$ and $|w_2|$.

    Using the notation $s_0=a,s_1=t$, and $G_1=\textbf{BS}(1,2)$, they further write,
    
    \begin{displayquote} We have $ta =_{\textbf{BS}(1,2)} a^2t$ and $at^{-1} =_{\textbf{BS}(1,2)} t^{-1}a^2$. This allows [us] to represent all group elements by words of the form $t^{-p}a^rt^q$ with $p, q \in \Naturals$ and $r \in \ZZ$. ... We denote by $\ZZ[1/2] = \{p/2^q \in \mathbb{Q} \mid p, q \in \ZZ\}$ the ring of dyadic fractions. Multiplication by 2 is an automorphism of the underlying additive group and therefore we can define the semi-direct product $\ZZ[1/2] \rtimes \ZZ$ as follows. Elements are pairs $(r, m) \in \ZZ[1/2] × \ZZ.$ The multiplication in $\ZZ[1/2] \rtimes \ZZ$ is defined by $$(r, m) \cdot (s, q) = (r + 2^ms, m + q).$$ ... It is straightforward to show that $a \mapsto (1, 0)$ and $t \mapsto (0, 1)$ defines a canonical isomorphism between $\textbf{BS}(1,2)$ and $\ZZ[1/2] \rtimes \ZZ$ ... \cite[269-270]{DMW} \end{displayquote}

    Returning to our notation, let $\varphi:s_0\mapsto (1,0),s_1\mapsto (0,1)$ be the automorphism they define, and let $x\in \langle s_0,s_1\rangle_G=G_1$, $\varphi(x)=(a,b)$. The standard normal form of $G_1$, referred to at the beginning of the quote, gives $x=s_1^{-b_1}s_0^as_1^{b_2}$, where $|b_1|+|b_2|\leq |x|_{G_1}$. The second coordinate of $\varphi(s_1^{-b_1}s_0^as_1^{b_2})$ is $-b_1+b_2$, which has magnitude $|b|=|-b_1+b_2|\leq |x|_{G_1}$.

    The authors denote $\gamma_1,\gamma_1'$ by $(r,m)=\varphi(\gamma_1),(s,q)=\varphi(\gamma_2)$ respectively, and so we have $|m|+|q|\leq |\gamma_1|_{G_1}+|\gamma_2|_{G_1}\leq |w_1|+|w_2|$. A computation performed in \cite{DMW} shows that $k\in \{0,-m,q-m\}$, so we are done.
\end{proof}
\begin{prop}\label{upperBoundGersten}
    We have $\CL_G(n)\preceq nE(\lfloor \log_2 n\rfloor, 1)$.
\end{prop}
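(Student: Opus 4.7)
The plan is to follow the outline given in the introduction: first normalize $u$ and $v$ to cyclically Britton-reduced representatives using Lemma~\ref{cycConjGersten}, then split into two cases depending on whether those representatives land in $X$ or in $\langle s_0, s_1\rangle_G$.

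Let $u,v$ be words on $\{s_0,s_1,t\}$ with $|u|+|v|\le n$ representing conjugate elements of $G$. Applying Lemma~\ref{cycConjGersten} to each yields cyclically Britton-reduced words $u''$ and $v''$ with $|u''|,|v''|\le Cn\bigl(E(\lfloor\log_2 Cn+C\rfloor,1)+1\bigr)$; each is obtained from the original word by conjugation by a prefix of length at most $n$ (the cyclic-conjugation step), followed by an equality in $G$ (the lemma itself). It therefore suffices to bound $c(u'',v'')$, since the resulting conjugator from $u$ to $v$ will have length at most $2n+c(u'',v'')$. By Britton's lemma (so that a cyclically Britton-reduced word representing an element of the base subgroup must contain no $t$-letters) together with Collins' conjugacy lemma for the HNN extension $G=\langle\BS(1,2),t\mid ts_0t^{-1}=s_1\rangle$ (which forces cyclically Britton-reduced conjugate elements to have the same number of $t$-letters), either both $u''$ and $v''$ lie in $X$, or both are $t$-free words representing elements of $\BS(1,2)$.

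In the first case I would invoke Lemma~\ref{DMWlength} directly, obtaining a conjugator of length at most $|u''|+|v''| \preceq nE(\lfloor\log_2 n\rfloor,1)$. In the second case $u''$ and $v''$ are words on $\{s_0,s_1\}$, and I would follow the template of Case~2 of the proof of Proposition~\ref{GmUpperBound}, now with $t$-corridors playing the role of the highest-index corridors. In a reduced annular diagram $\Delta$ for $u''$ and $v''$ over the presentation~\eqref{BGPres}, every $t$-edge belongs to a contractible or non-contractible $t$-ring (neither boundary carries a $t$-letter), and Lemma~\ref{reducedAlongCorrs} combined with the infinite order of $s_0$ and $s_1$ rules out contractible $t$-rings. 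I would then excise paired non-contractible $t$-rings of the same orientation via Lemma~\ref{exciseTwoCorridors}, cancel oppositely-oriented pairs, and extract a conjugator of the form $g\cdot t^T$ with $g \in \BS(1,2)$. The $\BS(1,2)$-piece is controlled using $\CL_{\BS(1,2)}(n) \simeq n$ via (an analogue of) Lemma~\ref{conjugateEntirelyInTwoConsecutiveGenerators}, while $T$ is constrained by an equation $t^T s_0^{a_1} t^{-T}=s_0^{a_2}$ (or the analogue with $s_1$) with $|a_i| \le |u''|+|v''|$, so Lemma~\ref{GerstenDistortionLemma} and Corollary~\ref{GerstenDistortionCor} give $|t^T|_G \preceq \log_2(nE(\lfloor\log_2 n\rfloor,1))$.

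Combining the two cases yields $c(u'',v'') \preceq nE(\lfloor\log_2 n\rfloor,1)$, and adding the $2n$ normalization cost establishes the proposition. The principal technical obstacle will be the diagrammatic argument in the second case: unlike in $G_m$, the relation $ts_0t^{-1}=s_1$ swaps the two $\BS(1,2)$-generators, so $t$-corridors interact with $s_1$-corridors and with the boundary subwords in a subtler manner than highest-index corridors did. One must verify carefully that excising $t$-rings and cancelling $s_1$-corridors preserves reducedness and does not introduce new $t$-pinches, and that the intermediate subwords are indeed powers of $s_0$ or $s_1$ of exponent bounded by $|u''|+|v''|$ so that the logarithmic distortion compression of $t^T$ from Lemma~\ref{GerstenDistortionLemma} applies.
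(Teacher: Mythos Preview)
Your normalization step and the first case (radial $t$-corridors, i.e.\ $u'',v''\in X$) match the paper's argument exactly. The gap is in the second case.

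The analogy with Case~2 of Proposition~\ref{GmUpperBound} breaks down precisely at the point you flag in your last paragraph: the relation $ts_0t^{-1}=s_1$ shifts the generator index rather than multiplying exponents. Consequently the equation you write, $t^{T}s_0^{a_1}t^{-T}=s_0^{a_2}$, is false for $T\neq 0$; the left-hand side equals $s_T^{a_1}$, which for $|T|\ge 2$ does not even lie in $\langle s_0,s_1\rangle_G$, so no such relation constrains $T$. And Lemma~\ref{GerstenDistortionLemma} and Corollary~\ref{GerstenDistortionCor} concern $|s_0^n|_G$ and $|s_1^n|_G$, not $|t^T|_G$: since $t$ is the stable letter of the HNN extension, $|t^T|_G=|T|$, so there is no logarithmic compression available for the $t$-part of the conjugator. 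Your mechanism for bounding $T$ therefore does not work.

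The paper proceeds differently and avoids bounding a large $T$ altogether. It exploits the $s_0/s_1$ asymmetry of the $t$-relation to show that, after excisions, a reduced diagram has at most two non-contractible $t$-rings. The negative side of a $t$-corridor carries only $s_1$-letters and the positive side only $s_0$-letters; hence two consecutive $t$-rings with the same orientation are impossible, because the $s_1$-corridors leaving the negative side of one have nowhere to land on the $s_0$-only positive side of the next. If instead two consecutive $t$-rings have their negative sides facing each other, the $s_1$-corridors between them force identical boundary words and Lemma~\ref{exciseTwoCorridors} removes the region. This leaves at most two $t$-rings (with negative sides facing outward), so the $t$-contribution to the conjugator is at most two letters, and everything else is handled by the linearity of $\CL_{\BS(1,2)}$ applied to words of length at most $\max\{|w_1|,|w_2|\}$. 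No distortion estimate on $T$ is needed or used.
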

\begin{proof}
    Let $n\geq C$, and let $u,v$ be conjugate elements of $G$ with $|u|_G+|v|_G\leq n$. By Lemma \ref{cycConjGersten}, we may assume $u$ and $v$ are represented by cyclically Britton-reduced words $w_1$ and $w_2$ respectively such that $$|w_i|\leq Cn(E(\lfloor \log_2Cn+C\rfloor,1)+1)+n\leq 2CnE(\lfloor \log_2Cn+C\rfloor,1).$$ Let $\Delta$ be a reduced annular diagram witnessing the conjugacy of $w_1$ and $w_2$ in $G$, and observe that every $t$-letter of $w_1$ and $w_2$ must be part of a radial $t$-corridor in $\Delta$.

Since $\langle s_0,s_1\rangle_G$ is normal in $G$,  $w_1,w_2\not \in\langle s_0,s_1\rangle_G$ if and only if $\Delta$ contains a radial $t$-corridor. If this is the case, the desired bound follows immediately from Lemma \ref{DMWlength}.

        Suppose instead that $\Delta$ has no radial $t$-corridors. We claim $\Delta$ contains at most two non-contractible $t$-rings, and if there are two, the negative sides face away from each other. Assume towards contradiction that there two non-contractible $t$-rings with their negative sides facing each other. Since all the $s_1$-corridors existing between these two $t$-corridors must run from one of them to the other, these $t$-corridors have identical words on their negative sides. The words along their positive sides are thus the same, so both may be excised by Proposition \ref{exciseTwoCorridors}. Lastly, there cannot be two consecutive non-contractible $t$-rings with the same orientation, since the $s_1$-corridors leaving the negative side of one cannot run to the positive side of the other. Among any set of three consecutive non-contractible $t$-rings, either two consecuctive ones have the same orientation, or two of them have their negative sides facing each other, whence our claim follows. This gives us two cases:
        \begin{enumerate}
            \item Suppose $\Delta$ has no non-contractible $t$-rings. Then $w_1$ and $w_2$ are elements of $\langle s_0,s_1\rangle=\BS(1,2)$ which are conjugate in the same group. The desired bound then follows from the fact that $\CL_{\BS(1,2)}(n)$ is linear \cite{sale2016conjugacy}.
            \item Suppose $\Delta$ has one or two non-contractible $t$-rings, and, if there are two, the negative sides are facing away from each other. Every $s_1$-corridor has its negative side facing a boundary component of $\Delta$, and hence has length at most $\max\{|w_1|,|w_2|\}$. Moreover, this word is conjugate itself to either $w_1$ or $w_2$ in $\langle s_0,s_1\rangle$. If $\Delta$ has one non-contractible $t$-ring, the word along the positive side has length at most $\max\{|w_1|,|w_2|\}$ and is conjugate in $\langle s_0,s_1\rangle$ to one of the words on the boundary of $\Delta$. If $\Delta$ has two non-contractible $t$-rings with their negative sides facing away from each other, the words along their positive sides are of length at most $\max\{|w_1|,|w_2|\}$ and conjugate in $\langle s_0,s_1\rangle$ to each other. Our claim again follows from the linearity of $\CL_{\BS(1,2)}(n)$.
        \end{enumerate}
\end{proof}

\begin{cor}\label{finalGerstenUpper}
   We have $\CL_G(n)\preceq E(\lfloor \log_2 n\rfloor,1)$.
\end{cor}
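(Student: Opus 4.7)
The plan is to combine Proposition \ref{upperBoundGersten}, which already supplies the bound $\CL_G(n)\preceq nE(\lfloor\log_2 n\rfloor,1)$, with the elementary observation that the prefactor of $n$ can be absorbed into the tower by raising the logarithm's argument by a bounded multiplicative constant. Concretely, it suffices to prove $nE(\lfloor\log_2 n\rfloor,1)\preceq E(\lfloor\log_2 n\rfloor,1)$, because $\preceq$ is transitive.

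The key inequality I would establish is that for all sufficiently large $n$,
\[
nE(\lfloor\log_2 n\rfloor,1)\ \leq\ E(\lfloor\log_2 n\rfloor+1,1)\ =\ 2^{E(\lfloor\log_2 n\rfloor,1)}.
\]
Taking $\log_2$ of both sides, this reduces to checking $\log_2 n+\log_2 E(\lfloor\log_2 n\rfloor,1)\leq E(\lfloor\log_2 n\rfloor,1)$. Writing $k=\lfloor\log_2 n\rfloor$, so that $\log_2 n\leq k+1$, this becomes the inequality $k+1+\log_2 E(k,1)\leq E(k,1)$, which is straightforward to verify by induction on $k$ once $k\geq 3$ (since $E(k,1)$ already dominates both $k$ and $\log_2 E(k,1)$ by huge margins for $k\geq 3$, and grows super-exponentially thereafter). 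The finitely many small values of $n$ where the bare inequality fails are harmless because the definition of $\preceq$ allows an additive $Cn+C$ slack.

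To finish, I would take $C=2$ in Definition \ref{simeqdef} and observe that $\lfloor\log_2(2n+2)\rfloor\geq\lfloor\log_2 n\rfloor+1$ for every $n\geq 1$, so
\[
nE(\lfloor\log_2 n\rfloor,1)\ \leq\ E(\lfloor\log_2 n\rfloor+1,1)\ \leq\ E(\lfloor\log_2(2n+2)\rfloor,1)
\]
for $n$ large, and absorb any small-$n$ discrepancies into the $+Cn+C$ term by enlarging $C$ accordingly. This gives $nE(\lfloor\log_2 n\rfloor,1)\preceq E(\lfloor\log_2 n\rfloor,1)$, and combining with Proposition \ref{upperBoundGersten} yields the corollary. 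There is no real obstacle here; the content is purely a routine verification that the iterated-exponential function absorbs polynomial prefactors under a bounded shift of its argument.
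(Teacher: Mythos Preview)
Your proof is correct and follows essentially the same approach as the paper: both absorb the prefactor $n$ into one extra level of the exponential tower to reach $E(\lfloor\log_2 n\rfloor+1,1)$, and then use the $\preceq$ relation (doubling the argument of $\log_2$) to drop back down. The only cosmetic difference is that the paper routes the first step through the auxiliary inequality $E(M,1)^i\leq E(M,i)$ (giving $nE(\lfloor\log_2 n\rfloor,1)\leq E(\lfloor\log_2 n\rfloor,1)^2\leq E(\lfloor\log_2 n\rfloor,2)=E(\lfloor\log_2 n\rfloor+1,1)$), whereas you take logarithms directly.
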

\begin{proof}
    First, we claim $E( M,1)^i\leq E(M,i)$ for $M\geq 1$. We proceed by induction on $M$, the base case holding because $(2^1)^i=2^i$. For the inductive step, we have $$E(M,1)^i=2^{iE(M-1,1)}\leq 2^{E(M-1,i)}=E(M,i).$$ Now, by Proposition \ref{upperBoundGersten}, $$\CL_G(n)\preceq n E(\lfloor \log_2 n\rfloor,1)\leq E(\lfloor \log_2 n\rfloor,1)^2\leq E(\lfloor \log_2 n\rfloor,2).$$ By the definition of $E$, this is bounded above by $$E(\lfloor \log_2 n\rfloor+1,1)\leq E(\lfloor \log_2 2n\rfloor,1) \preceq E(\lfloor \log_2 n\rfloor,1), $$ and we are done. 
    
\end{proof}
\begin{remark}
    Mattes and Wei{\ss} in \cite{MattesWeiss} give a constructive algorithm to determine if a word $u$ is conjugate to an element $g$ for any fixed $g$, using in part ideas presented in \cite{DMW}. While we do not explore this here, we hypothesize that an analysis of their algorithm may give an alternative proof of Corollary \ref{finalGerstenUpper}.
\end{remark}

\section{Lower Bound for The Baumslag-Gersten group}

\begin{lemma}\label{numberOfTermsGersten}
    If $|s_0^k|_{G}\leq n$ then $k=n_0+\sum_{j=0}^p2^{m_j}n_j$ for some $n_0,n_1,\ldots,n_p,m_1,\ldots,m_p\in \ZZ$ such that $\sum_{j=0}^p|n_j|\leq n$. 
    
\end{lemma}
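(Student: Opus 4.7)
The plan is to reduce this statement directly to Lemma \ref{numberOfTerms} by transferring the problem into an iterated Baumslag-Solitar group via the subgroup $H_{M+1}$ defined just before Lemma \ref{lowerWordBoundGersten}, where $M = |s_0^k|_G \leq n$.

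First I would invoke Lemma \ref{lowerWordBoundGersten} with $g = s_0^k$: since $s_0^k$ lies in $\langle \ldots, s_{-2}, s_{-1}, s_0, s_1, \ldots\rangle$, we obtain $s_0^k \in H_{M+1}$ and $|s_0^k|_{H_{M+1}} \leq M \leq n$. Next I would translate the problem into $G_{2(M+1)}$ using the isomorphism $H_{M+1} \cong G_{2(M+1)}$ described in the paragraph preceding Lemma \ref{lowerWordBoundGersten}. Concretely, conjugation by $t^{M+1}$ sends $s_j = t^j s_0 t^{-j}$ to $s_{j+M+1}$, which for $j \in \{-(M+1), \ldots, M+1\}$ gives a bijection between the chosen generators of $H_{M+1}$ and those of the subgroup $\langle s_0, s_1, \ldots, s_{2(M+1)}\rangle_G$, which by Lemma \ref{GerstenSubgroupIterated} is isomorphic to $G_{2(M+1)}$ on its chosen generating set. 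Under this composite isomorphism, $s_0 \in H_{M+1}$ corresponds to the generator $s_{M+1}$ of $G_{2(M+1)}$, so
\[
|s_{M+1}^k|_{G_{2(M+1)}} \;=\; |s_0^k|_{H_{M+1}} \;\leq\; n.
\]

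Finally I would apply Lemma \ref{numberOfTerms} in $G_{2(M+1)}$ with $i = M+1$; since $M+1 < 2(M+1)$, the hypothesis of that lemma is satisfied, and it yields the required expression $k = n_0 + \sum_{j=0}^{p} 2^{m_j} n_j$ with $\sum_{j=0}^{p} |n_j| \leq n$. The only place where care is needed is in tracking the bijection of chosen generating sets through the conjugation by $t^{M+1}$ and the identification of Lemma \ref{GerstenSubgroupIterated}, so as to confirm that $s_0 \in G$ really corresponds to one of the chosen generators of $G_{2(M+1)}$ (namely $s_{M+1}$); once this is verified, no further work is necessary, as Lemma \ref{numberOfTerms} supplies the representation directly.
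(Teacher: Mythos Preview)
Your proposal is correct and follows essentially the same route as the paper: invoke Lemma~\ref{lowerWordBoundGersten} to pass from $G$ to $H_{M+1}\cong G_{2M+2}$, then apply Lemma~\ref{numberOfTerms}. In fact you are more careful than the paper in tracking that $s_0\in H_{M+1}$ corresponds to $s_{M+1}\in G_{2M+2}$ under the conjugation-by-$t^{M+1}$ isomorphism; the paper compresses this into the single line $|s_0^k|_{G_{2M+2}}\leq |s_0^k|_G$, relying on the notational conventions of Remark~\ref{newGerstenPres}.
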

\begin{proof}
 By Lemma \ref{lowerWordBoundGersten}, $|s_0^k|_{G_{2M+2}}\leq |s_0^k|_G $, where $M=|s_0^k|_G$. The claim follows from Lemma \ref{numberOfTerms}.
\end{proof}

\begin{lemma}\label{lengthOfCompactPowerGersen}
    Let $k=\sum_{j=0}^{p}2^{2j+\alpha}$ for some $\alpha\in \ZZ$. Then $|s_0^k|_{G}\geq (p-1)/2$.
\end{lemma}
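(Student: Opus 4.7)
The plan is to mirror the proof of Corollary \ref{lengthOfCompactPower} verbatim, with the single change that we invoke Lemma \ref{numberOfTermsGersten} in place of Lemma \ref{numberOfTerms}. Lemma \ref{numberOfTermsGersten} is precisely the transfer device we need: it was proved by passing from $G$ down to $G_{2M+2}$ via Lemma \ref{lowerWordBoundGersten} and then invoking the iterated-Baumslag-Solitar bound, so all the combinatorial content from the $G_m$ case is already packaged for us.

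Concretely, I would proceed as follows. Let $n = |s_0^k|_G$. By Lemma \ref{numberOfTermsGersten}, write
\[
k = n_0 + \sum_{j=1}^{q} 2^{m_j} n_j
\]
with $n_j \neq 0$ for all $j \geq 1$ and $\sum_{j=0}^{q} |n_j| \leq n$, choosing the representation that minimises $\sum_{j=0}^{q} |n_j|$. Next, apply Lemma \ref{convertTo MAW BinarySums} to rewrite this as
\[
k = \varepsilon_0 + \sum_{j=1}^{q'} 2^{m_j'} \varepsilon_j
\]
with each $\varepsilon_j \in \{-1, 1\}$ and $q' \leq 2\sum_{j=0}^{q} |n_j|$. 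Finally, invoke Lemma~2.8(2) of \cite{miasnikov2012power}: any representation of $k = \sum_{j=0}^{p} 2^{2j+\alpha}$ by a sum with $\pm 1$ coefficients and distinct powers of $2$ has at least $p$ nonzero terms. This forces $p \leq q' + 1 \leq 2\sum_{j=0}^{q} |n_j| + 1 \leq 2|s_0^k|_G + 1$, hence $|s_0^k|_G \geq (p-1)/2$.

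There is essentially no new obstacle here, because Lemma \ref{numberOfTermsGersten} has already absorbed the distinction between the group $G$ and the iterated Baumslag-Solitar groups $G_m$. The one small thing worth double-checking is that Lemma~\ref{convertTo MAW BinarySums} and the application of Lemma~2.8(2) of \cite{miasnikov2012power} do not require anything about the ambient group at all; they are purely arithmetic statements about integer representations, so they apply unchanged. Thus the proof is a two-line reduction to the previously established material.
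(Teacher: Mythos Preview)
Your proposal is correct and matches the paper's own proof exactly: the paper simply says that the argument of Corollary \ref{lengthOfCompactPower} goes through verbatim once the single invocation of Lemma \ref{numberOfTerms} is replaced by Lemma \ref{numberOfTermsGersten}. Your observation that Lemma \ref{convertTo MAW BinarySums} and the cited result from \cite{miasnikov2012power} are purely arithmetic (and hence independent of the ambient group) is precisely why this substitution suffices.
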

\begin{proof}
    This holds by the same proof as Lemma \ref{lengthOfCompactPower}, replacing the sole application of Lemma \ref{numberOfTerms} with Lemma \ref{numberOfTermsGersten}.
\end{proof}
\begin{cor}\label{lengthOfPowersOf2OfGeneratorsGersten}
    For any $m,n\in \NN$, $\abs{s_0^{E(m,n)}}_{G}\leq 2^m\cdot m(n+m)$
\end{cor}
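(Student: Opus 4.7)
The plan is to derive this corollary by transporting the analogous bound from the iterated Baumslag-Solitar group $G_m$ to $G$, using the inclusion $G_m \hookrightarrow G$ from Lemma \ref{GerstenSubgroupIterated}. This should be a short computation with no real obstacle.

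First I would note that $s_0^{E(m,n)}$ lies in the subgroup $\langle s_0, s_1, \ldots, t^m s_0 t^{-m}\rangle_G$ of $G$, which by Lemma \ref{GerstenSubgroupIterated} is isomorphic to $G_m$ via a map that carries the chosen generator $s_i$ of $G_m$ to $t^i s_0 t^{-i}$ and in particular sends $s_0 \in G_m$ to $s_0 \in G$. By Remark \ref{newGerstenPres} we need not distinguish $s_0^{E(m,n)}$ in $G_m$ from its image in $G$.

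Next I would apply Lemma \ref{lengthOfPowersOf2OfGenerators} with the index parameter $i=0$, which yields
\[
\bigl|s_0^{E(m,n)}\bigr|_{G_m} \;\leq\; 2^{m}(n+m).
\]
This is the main content, and it was already proved by downward induction on the generator index in the iterated Baumslag–Solitar group.

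Finally, I would invoke Remark \ref{IteratedToGerstenGeneratorWords}, which states that for any $g \in \langle s_0, \ldots, s_m\rangle_G$ one has $|g|_G \leq m |g|_{G_m}$, to translate the bound from $G_m$ into $G$:
\[
\bigl|s_0^{E(m,n)}\bigr|_G \;\leq\; m \cdot \bigl|s_0^{E(m,n)}\bigr|_{G_m} \;\leq\; 2^{m}\cdot m(n+m),
\]
as desired. The only thing to double-check is that all three cited statements are available at this point in the paper and that the indices line up correctly; there is no genuinely hard step here, since the recursive tower-of-exponentials estimate has already been absorbed into Lemma \ref{lengthOfPowersOf2OfGenerators}.
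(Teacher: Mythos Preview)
Your proof is correct and follows exactly the same approach as the paper's: apply Lemma~\ref{lengthOfPowersOf2OfGenerators} with $i=0$ to bound $|s_0^{E(m,n)}|_{G_m}$, then use Remark~\ref{IteratedToGerstenGeneratorWords} to pass from $G_m$ to $G$. The paper's proof is just the one-line version of what you wrote.
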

\begin{proof}
    By Remark \ref{IteratedToGerstenGeneratorWords} and Lemma \ref{lengthOfPowersOf2OfGenerators}, $$\abs{s_0^{E(m,n)}}_{G}\leq m \abs{s_0^{E(m,n)}}_{G_m}\leq  2^m\cdot m(n+m). $$
\end{proof}

\begin{cor}\label{sizeLowerBoundGersten}
    Let $\beta=2^{-\alpha} (E(m,n)-1)/3$ for some $\alpha\leq 0$. Then $|s_1^\alpha s_0^\beta|_G\geq (E(m-1,n)-1)/4$.
\end{cor}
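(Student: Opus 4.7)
The plan is to mimic the proof of Lemma \ref{s1s0bound}, substituting Lemma \ref{lengthOfCompactPowerGersen} in place of Corollary \ref{lengthOfCompactPower}, and using Lemma \ref{lowerWordBoundGersten} to transport the argument from $G$ into an iterated Baumslag--Solitar subgroup.

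I first rewrite $\beta$ as a compact binary sum. For $m\geq 2$ and $n\geq 1$, the exponent $L=E(m-1,n)=2^{E(m-2,n)}$ is an even positive integer, so the geometric series identity $3\sum_{j=0}^{L/2-1}4^{j}=2^{L}-1$ gives $(E(m,n)-1)/3=\sum_{j=0}^{E(m-1,n)/2-1}2^{2j}$. Multiplying by $2^{-\alpha}$ yields
$$\beta=\sum_{j=0}^{E(m-1,n)/2-1}2^{2j-\alpha},$$
a sum of $E(m-1,n)/2$ distinct powers of $2$ in arithmetic progression of common difference $2$. Applying Lemma \ref{lengthOfCompactPowerGersen} with $p=E(m-1,n)/2-1$ then gives $|s_{0}^{\beta}|_{G}\geq (p-1)/2$, which matches the desired $(E(m-1,n)-1)/4$ up to small additive constants.

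Next I pass from $|s_{0}^{\beta}|_{G}$ to $|s_{1}^{\alpha}s_{0}^{\beta}|_{G}$ via the subgroup $H_{M+1}$. Setting $M=|s_{1}^{\alpha}s_{0}^{\beta}|_{G}$, Lemma \ref{lowerWordBoundGersten} places the element inside $H_{M+1}$ with $|s_{1}^{\alpha}s_{0}^{\beta}|_{H_{M+1}}\leq M$. Under the isomorphism $H_{M+1}\cong G_{2M+2}$ supplied by Lemma \ref{GerstenSubgroupIterated}, this element corresponds to $s_{M+2}^{\alpha}s_{M+1}^{\beta}\in G_{2M+2}$; the iterated retraction $G_{2M+2}\to G_{M+1}$ sending $s_{i}\mapsto s_{i-(M+1)}$ for $i\geq M+1$ and $s_{i}\mapsto 1$ otherwise then carries it back to $s_{1}^{\alpha}s_{0}^{\beta}\in G_{M+1}$, giving $M\geq |s_{1}^{\alpha}s_{0}^{\beta}|_{G_{M+1}}$.

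Inside $G_{M+1}$ I replay the end of the proof of Lemma \ref{s1s0bound}: the triangle inequality together with Corollary \ref{retractShrinksWords} applied in $G_{M+1}$ yields $|s_{1}^{\alpha}s_{0}^{\beta}|_{G_{M+1}}\geq |s_{0}^{\beta}|_{G_{M+1}}/2$, and the compact-sum lower bound on $|s_{0}^{\beta}|_{G_{M+1}}$ via Corollary \ref{lengthOfCompactPower} (applied to the same representation of $\beta$ obtained above) closes the loop. The main obstacle is not conceptual but constant-level: the $\max$-step threatens a factor-of-$2$ loss relative to the stated $(E(m-1,n)-1)/4$, and verifying that this loss is absorbed into the same additive slack already present in Lemma \ref{s1s0bound} is the only nontrivial bookkeeping required.
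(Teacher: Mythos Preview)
Your proposal is correct and follows essentially the same route as the paper. The paper's proof sets $M=|s_1^\alpha s_0^\beta|_G$, invokes Lemma~\ref{lowerWordBoundGersten} to obtain $M\geq|s_{M+2}^\alpha s_{M+1}^\beta|_{G_{2M+2}}$, and then declares that ``the same argument as Lemma~\ref{s1s0bound} suffices \ldots\ changing the indices of the $s$-letters as needed.'' Your extra step of composing retractions $G_{2M+2}\to G_{M+1}$ so that Corollary~\ref{retractShrinksWords} and Corollary~\ref{lengthOfCompactPower} apply verbatim to $s_1^\alpha s_0^\beta$ is just an explicit way of carrying out that index shift; the paper instead implicitly uses the index-shifted analogues of those corollaries directly in $G_{2M+2}$. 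Your observation about the constant is also on target: replaying Lemma~\ref{s1s0bound} literally gives $(E(m-1,n)-1)/8$ rather than the stated $(E(m-1,n)-1)/4$, so the $1/4$ appears to be a harmless slip in the paper (the corollary is only ever used up to $\simeq$). The invocation of Lemma~\ref{lengthOfCompactPowerGersen} in your first paragraph is redundant, since you ultimately appeal to Corollary~\ref{lengthOfCompactPower} inside $G_{M+1}$, but this does no harm.
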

\begin{proof}
    Let $M=|s_1^\alpha s_0^\beta|_G$. By Lemma \ref{lowerWordBoundGersten}, $M\geq |s_1^\alpha s_0^\beta|_{H_{M+1}}=|s_{M+2}^\alpha s_{M+1}^\beta|_{G_{2M+2}}$. The same argument as Lemma  \ref{s1s0bound} suffices to finish the proof, changing the indices of the $s$-letters as needed.
\end{proof}
\begin{prop}\label{lowerBoundGersten}
    We have $\CL_G(n)\succeq E\left(\left\lfloor \frac{1}{3}\log_2(n)\right\rfloor-1,1\right)$.
\end{prop}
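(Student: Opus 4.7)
I would mimic the strategy of Proposition~\ref{IteratedLowerBound}: exhibit a short conjugate pair $(u_n, v_n)$ in $G$, identify the form of every conjugator, and apply Corollary~\ref{sizeLowerBoundGersten}. Given $n$, choose $m \geq 2$ to be the largest integer satisfying $5 + 2^m m(m+1) \leq n$, and take
\[
u_n = s_1^2, \qquad v_n = s_0^{E(m,1)-1}s_1^2.
\]
Since $E(m-1,1)$ is even for $m \geq 2$, $E(m,1) - 1 = 2^{E(m-1,1)} - 1$ is divisible by $3$, so $s_0^{-(E(m,1)-1)/3}$ conjugates $u_n$ to $v_n$ in $\BS(1,2) = \langle s_0,s_1\rangle_G$ via the identity $s_0^\beta s_1^2 s_0^{-\beta} = s_0^{-3\beta}s_1^2$. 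Corollary~\ref{lengthOfPowersOf2OfGeneratorsGersten} combined with $m(m+1) \leq 2^{2m}$ for $m \geq 1$ then gives $|u_n|_G + |v_n|_G \leq 5 + 2^{3m} \leq n$, and the choice of $m$ yields $m \gtrsim \tfrac{1}{3}\log_2 n$, so any lower bound of order $E(m-1,1)$ on the conjugator length translates, via the $\preceq$-constants of Definition~\ref{simeqdef}, into the claimed $E(\lfloor \tfrac{1}{3}\log_2 n\rfloor - 1, 1)$.

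The crucial step is to show that every $\gamma \in G$ with $\gamma u_n \gamma^{-1} = v_n$ equals in $G$ an element of the form $s_1^\alpha s_0^\beta$ with $\alpha \leq 0$ and $\beta = -2^{-\alpha}(E(m,1)-1)/3$; Corollary~\ref{sizeLowerBoundGersten} then yields $|\gamma|_G \gtrsim E(m-1,1)$ and the proof is complete. Given the explicit conjugator $\gamma_0 = s_0^{-(E(m,1)-1)/3}$, any other conjugator equals $\gamma_0 \cdot c$ for some $c \in C_G(u_n)$; so provided $C_G(u_n) = \langle s_1 \rangle$, rewriting $\gamma_0 \cdot s_1^k$ in the semidirect-product form used for the embedding $\BS(1,2) \hookrightarrow \ZZ[1/2] \rtimes \ZZ$ (as in the proof of Lemma~\ref{DMWlength}) produces exactly the desired expression $s_1^\alpha s_0^\beta$; the integrality of $\beta$ together with the fact that $(E(m,1)-1)/3$ is odd for $m \geq 2$ forces $\alpha \leq 0$.

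The main obstacle is therefore verifying $C_G(s_1^2) = \langle s_1\rangle$ inside the HNN extension $G = \BS(1,2) *_{\langle s_0\rangle = \langle s_1\rangle}$ with stable letter $t$. I would argue via Britton's lemma: writing $g \in C_G(s_1^2)$ in Britton-reduced form $g_0 t^{\epsilon_1}g_1 \cdots t^{\epsilon_k}g_k$, the equation $g s_1^2 g^{-1} = s_1^2$ forces all $2k$ of the $t^{\pm 1}$ letters to cancel in pinches. Induction on $k$, using the conjugation formula $(a,b)(0,N)(-2^{-b}a,-b) = (a(1-2^N), N)$ in $\ZZ[1/2]\rtimes\ZZ$, forces each inner $g_i$ into an edge subgroup $\langle s_0\rangle$ or $\langle s_1\rangle$ in a pattern that contradicts Britton-reducedness unless $k=0$, and for $k=0$ the standard computation gives $C_{\BS(1,2)}(s_1^2) = \langle s_1\rangle$. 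Diagrammatically the same conclusion can be extracted from an annular-diagram analysis using Lemma~\ref{reducedAlongCorrs} and the infinite order of $s_0, s_1$ to kill contractible $t$-rings, together with a peeling argument modelled on the non-contractible $s_k$-ring case of Proposition~\ref{GmUpperBound} to handle non-contractible $t$-rings; I would choose the Britton's-lemma route as the cleaner of the two.
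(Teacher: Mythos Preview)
Your plan is essentially the paper's own proof: the same test pair $u_n=s_1^2$, $v_n=s_0^{E(m,1)-1}s_1^2$ with $m\approx\tfrac13\log_2 n$ (the paper sets $\epsilon=E(\lfloor\log_2 n^{1/3}\rfloor,1)$ directly rather than via your inequality, a cosmetic difference), the same reduction to ``every conjugator lies in $\langle s_0,s_1\rangle$'', and the same invocation of Corollary~\ref{sizeLowerBoundGersten}. The only substantive difference is tooling: the paper carries out the reduction diagrammatically---peeling off the non-contractible $t$-ring nearest $v_n$ and checking that the word along it is $s_1^2=u_n$, exactly the second route you sketch---whereas you opt for the Britton's-lemma computation of $C_G(s_1^2)$. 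Both work, and your route has the advantage of making the statement ``\emph{every} conjugator equals some $\gamma_0 s_1^k$'' completely explicit.

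One point to tighten. Your sentence ``the integrality of $\beta$ \ldots\ forces $\alpha\le 0$'' does not dispose of the conjugators $\gamma_0 s_1^k$ with $k>0$: those elements are genuine conjugators that simply cannot be written as $s_1^\alpha s_0^\beta$ with $\beta\in\ZZ$, so Corollary~\ref{sizeLowerBoundGersten} does not apply to them as stated. The patch is immediate: for $k>0$ keep the expression $\gamma_0 s_1^k=s_0^{-(E(m,1)-1)/3}s_1^k$, pass to $H_{M+1}\simeq G_{2M+2}$ via Lemma~\ref{lowerWordBoundGersten}, and run the triangle-inequality/retraction argument of Lemma~\ref{s1s0bound} together with Lemma~\ref{lengthOfCompactPowerGersen} on the $s_0$-factor (whose exponent $(E(m,1)-1)/3=\sum_j 2^{2j}$ is already in the shape that Lemma~\ref{lengthOfCompactPowerGersen} wants). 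The paper's Proposition~\ref{IteratedLowerBound} glosses over the same case.
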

\begin{proof}
    Let $n$ be given, and let $\epsilon=E(\lfloor\log_2(n^{1/3})\rfloor,1)$. By Corollary \ref{lengthOfPowersOf2OfGeneratorsGersten}, $\abs{s_0^\epsilon}_G\leq Cn$ for some constant $C>0$. Let $u_n=s_1^2$ and $v_n=s_1^2 s_0^{\epsilon-1}$. Then $u_n$ and $v_n$ are conjugate via $s_0^{(\epsilon-1)/3}$. To compute $c(u_n,v_n)$, let $\Delta$ be a reduced annular diagram witnessing the conjugacy of $u_n$ and $v_n$. Consider the non-contractible $t$-ring $Q$ in $\Delta$ closest to $v_n$ (if any exist), and let $w$ be the word along the side closest to $v_n$. Then $w$ is either a power of $s_0$ or a power of $s_1$. Since $w$ and $v_n$ are conjugate in $\langle s_0,s_1\rangle_G$, which is isomorphic to $\BS(1,2)$, this implies $w=s_1^2-u_n$. Restricting attention to the subdiagram bounded by $Q$ and $u_n$, our claim thus follows by the same argument as Proposition \ref{IteratedLowerBound}, using Corollary \ref{sizeLowerBoundGersten} in place of Lemma \ref{s1s0bound}.
\end{proof}
Combined with Corollary \ref{finalGerstenUpper}, this completes the proof of Theorem \ref{SecondTheorem}.

\bibliographystyle{alpha}
\bibliography{bib}

\ni  {Conan Gillis} \rule{0mm}{6mm} \\
Department of Mathematics, 310 Malott Hall,  Cornell University, Ithaca, NY 14853, USA 
\\ {cg527@cornell.edu}, \
\href{https://math.cornell.edu/conan-gillis}{http://www.math.cornell.edu/conan-gillis/}
    
\end{document}